\numberwithin{equation}{section}
\theoremstyle{plain}
\newtheorem{thm}{Theorem}[section]
\newtheorem{remark}{Remark}[section]
\newtheorem{lemma}{Lemma}[section]
\newtheorem{prop}{Proposition}[section]
\newtheorem{corollary}{Corollary}[section]
\newcommand{\mbf}[1]{\boldsymbol{#1}}
\newcommand{\mbb}[1]{\mathbb{#1}}
\newcommand{\mcal}[1]{\mathcal{#1}}
\newcommand{\mrm}[1]{\textrm{#1}}
\newcommand\numberthis{\addtocounter{equation}{1}\tag{\theequation}}
\begin{document}

\begin{frontmatter}
\title{Asymptotic Properties of Neural Network Sieve Estimators}
\runtitle{Asymptotics for Neural Networks}

\begin{aug}
\author{\fnms{Xiaoxi} \snm{Shen}\thanksref{m1,m2}
\ead[label=e1]{shenxia4@stt.msu.edu}},
\author{\fnms{Chang} \snm{Jiang}\thanksref{m2}
\ead[label=e2]{jiangc@ufl.edu}}
\author{\fnms{Lyudmila} \snm{Sakhanenko}\thanksref{m1}
\ead[label=e3]{luda@stt.msu.edu}}
\and
\author{\fnms{Qing} \snm{Lu}\thanksref{t1, m2}
\ead[label=e4]{lucienq@ufl.edu}}

\thankstext{t1}{To whom correspondence should be addressed.}
\runauthor{X. Shen et al.}

\affiliation{Michigan State University\thanksmark{m1}}
\affiliation{University of Florida\thanksmark{m2}}

\address{Department of Statistics and Probability\\
Michigan State University\\
East Lansing, MI 48824, USA\\
\printead{e1}\\
\phantom{E-mail:\ }}

\address{Department Biostatistics\\
University of Florida\\
Gainesville, FL 32611, USA\\
\printead{e2}}

\address{Department of Statistics and Probability\\
Michigan State University\\
East Lansing, MI 48824, USA\\
\printead{e3}\\
\phantom{E-mail:\ }}

\address{Department Biostatistics\\
University of Florida\\
Gainesville, FL 32611, USA\\
\printead{e4}}
\end{aug}

\begin{abstract}
Neural networks have become one of the most popularly used methods in machine learning and artificial intelligence. Due to the universal approximation theorem \citep{hornik1989multilayer}, a neural network with one hidden layer can approximate any continuous function on compact support as long as the number of hidden units is sufficiently large. Statistically, a neural network can be classified into a nonlinear regression framework. However, if we consider it parametrically, due to the unidentifiability of the parameters, it is difficult to derive its asymptotic properties. Instead, we consider the estimation problem in a nonparametric regression framework and use the results from sieve estimation to establish the consistency, the rates of convergence and the asymptotic normality of the neural network estimators. We also illustrate the validity of the theories via simulations.
\end{abstract}


\begin{keyword}
\kwd{Empirical Processes}
\kwd{Entropy Integral}
\end{keyword}

\end{frontmatter}

\section{Introduction}
With the success of machine learning and artificial intelligence in researches and industry, neural networks have become popularly used methods nowadays. Many newly developed machine learning methods are based on deep neural networks and have achieved great classification and prediction accuracy. We refer interested readers to \citet{goodfellow2016deep} for more background and details. In classical statistical learning theory, the consistency and the rate of convergence of the empirical risk minimization principle are of great interest. Many upper bounds have been established for the empirical risk and the sample complexity based on the growth function and the Vapnik-Chervonenkis dimension (see for example, \citet{vapnik1998statistical, anthony2009neural, devroye2013probabilistic}). However, few studies have focused on the asymptotic properties for neural networks. As Thomas J. Sargent said, ``artificial intelligence is actually statistics, but in a very gorgeous phrase, it is statistics." So it is natural and worthwhile to explore whether neural networks possess nice asymptotic properties. As if they do, it may be possible to conduct statistical inference based on neural networks. Throughout this paper, we will focus on the asymptotic properties of neural networks with one hidden layer.

In statistics, fitting a neural network with one hidden layer can be viewed as a parametric nonlinear regression problem:
$$
y_i=\alpha_0+\sum_{j=1}^{r}\alpha_j\sigma\left(\mbf{\gamma}_j^T\mbf{x}_i+\gamma_{0,j}\right)+\epsilon_i,
$$
where $\epsilon_1,\ldots,\epsilon_n$ are i.i.d. random errors with $\mbb{E}[\epsilon]=0$ and $\mbb{E}[\epsilon^2]=\sigma^2<\infty$ and $\sigma(\cdot)$ is an activation function such as $\sigma(z)=1/(1+e^{-z})$, which is used in this paper. \citet{white2001statistical} obtained the asymptotic distribution of the resulting estimators under the assumption of the true parameters being unique. In fact, the authors implicitly assumed that the number of hidden units $r$ is known. However, even if we assume that we know the number of hidden units, it is difficult to establish the asymptotic properties for the parameter estimators. In section \ref{Sec: parameter inconsistency}, we conducted a simulation based on a single-layer neural network with 2 hidden units. Even for such a simple model, the simulation result suggests that it is unlikely to obtain consistent estimators. Moreover, since the number of hidden units is usually unknown in practice, such an assumption can be easily violated. For example, as pointed out in \citet{fukumizu1996regularity} and \citet{fukumizu2003likelihood}, if the true function is $f_0(x)=\alpha\sigma(\gamma x)$, (i.e., the true number of hidden units is 1), and we fit the model using a neural network with two hidden units, then any parameter $\mbf{\theta}=[\alpha_0,\alpha_1,\ldots,\alpha_r,\gamma_{0,1},\ldots,\gamma_{0,r},\mbf{\gamma}_1^T,\ldots,\mbf{\gamma}_r^T]^T$ in the high-dimensional set
\begin{align*}
\{\mbf{\theta}:\gamma_1=\gamma,\alpha_1=\alpha,\gamma_{0,1}=\gamma_{0,2}=\alpha_2=\alpha_0=0\}\cup\\
\{\mbf{\theta}:\gamma_1=\gamma_2=\gamma,\gamma_{0,1}=\gamma_{0,2}=\alpha_0=0,\alpha_1+\alpha_2=\alpha\}
\end{align*}
realizes the true function $f_0(x)$. Therefore, when the number of hidden units is unknown, the parameters in this parametric nonlinear regression problem are unidentifiable. Theorem 1 in \citet{wu1981asymptotic} showed that a necessary condition for the weak consistency of nonlinear least square estimators is
$$
\sum_{i=1}^n[f(\mbf{x}_i,\mbf{\theta})-f(\mbf{x}_i,\mbf{\theta}')]^2\to\infty,\mrm{ as }n\to\infty,
$$
for all $\mbf{\theta}\neq\mbf{\theta}'$ in the parameter space as long as the error distribution has finite Fisher information. Such a condition implies that when the parameters are not identifiable, the resulting nonlinear least squares estimators will be inconsistent, which hinders further explorations on the asymptotic properties for the neural network estimators. \citet{liu2003asymptotics} and \citet{zhu2006asymptotics} proposed techniques to conduct hypothesis testing under loss of identifiability. However, their theoretical results are not easy to implement in the neural network setting.

Even though a function can have different neural network parametrizations, the function itself can be considered as unique. Moreover, due to the Universal Approximation Theorem \citep{hornik1989multilayer}, any continuous function on compact support can be approximated arbitrarily well by a neural network with one hidden layer. So it seems natural to consider it as a nonparametric regression problem and approximate the underlying function class through a class of neural networks with one hidden layer. Specifically, suppose that the true nonparametric regression model is
$$
y_i=f_0(\mbf{x}_i)+\epsilon_i,
$$
where $\epsilon_1,\ldots,\epsilon_n$ are i.i.d. random variables defined on a complete probability space $(\Omega,\mcal{A},\mbb{P})$ with $\mbb{E}[\epsilon]=0$, $\mrm{Var}[\epsilon]=\sigma^2<\infty$; $\mbf{x}_1,\ldots,\mbf{x}_n\in\mcal{X}\subset\mbb{R}^d$ are vectors of covariates with $\mcal{X}$ being a compact set in $\mbb{R}^d$ and $f_0$ is an unknown function needed to be estimated. We assume that $f_0\in\mcal{F}$, where $\mcal{F}$ is the class of continuous functions with compact supports. Clearly, $f_0$ minimizes the population criterion function
\begin{align*}
Q_n(f) & =\mbb{E}\left[\frac{1}{n}\sum_{i=1}^n(y_i-f(\mbf{x}_i))^2\right]\\
	& =\frac{1}{n}\sum_{i=1}^n(f(\mbf{x}_i)-f_0(\mbf{x}_i))^2+\sigma^2.
\end{align*}
A least squares estimator of the regression function can be obtained by minimizing the empirical squared error loss $\mbb{Q}_n(f)$:
$$
\hat{f}_n=\mrm{argmin}_{f\in\mcal{F}}\mbb{Q}_n(f)=\mrm{argmin}_{f\in\mcal{F}}\frac{1}{n}\sum_{i=1}^n(y_i-f(\mbf{x}_i))^2.
$$
However, if the class of functions $\mcal{F}$ is too rich, the resulting least squares estimator may have undesired properties, such as inconsistency \citep{van2000empirical, shen1994convergence, shen1997methods}. Instead, we can optimize the squared error loss over some less complex function space $\mcal{F}_n$, which is an approximation of $\mcal{F}$ while the approximation error tends to 0 as the sample size increases. In the language of \citet{grenanderabstract}, such a sequence of function classes is known as a sieve. More precisely, we consider a sequence of function classes,
$$
\mcal{F}_1\subseteq\mcal{F}_2\subseteq\cdots\subseteq\mcal{F}_n\subseteq\mcal{F}_{n+1}\subseteq\cdots\subseteq\mcal{F},
$$
approximating $\mcal{F}$ in the sense that $\bigcup_{n=1}^\infty\mcal{F}_n$ is dense in $\mcal{F}$. In other words, for each $f\in\mcal{F}$, there exists $\pi_nf\in\mcal{F}_n$ such that $d(f,\pi_nf)\to0$ as $n\to\infty$, where $d(\cdot,\cdot)$ is some pseudo-metric defined on $\mcal{F}$. With some abuse of notation, an approximate sieve estimator $\hat{f}_n$ is defined to be
\begin{equation}\label{Eq: SieveEstimator}
\mbb{Q}_n(\hat{f}_n)\leq\inf_{f\in\mcal{F}_n}\mbb{Q}_n(f)+\mcal{O}_p(\eta_n),
\end{equation}
where $\eta_n\to0$ as $n\to\infty$.

Throughout the rest of the paper, we focus on the sieve of neural networks with one hidden layer and sigmoid activation function. Specifically, we let
\begin{align*}
\mcal{F}_{r_n} & =\left\{\alpha_0+\sum_{j=1}^{r_n}\alpha_j\sigma\left(\mbf{\gamma}_j^T\mbf{x}+\gamma_{0,j}\right):\mbf{\gamma}_j\in\mbb{R}^d, \alpha_j, \gamma_{0,j}\in\mbb{R},\right.\\
	& \qquad\sum_{j=0}^{r_n}|\alpha_j|\leq V_n\mrm{ for some }V_n>4\left.\mrm{ and }\max_{1\leq j\leq r_n}\sum_{i=0}^d|\gamma_{i,j}|\leq M_n\mrm{ for some }M_n>0\right\},\numberthis\label{Eq: NNSieve}
\end{align*}
where $r_n, V_n, M_n\uparrow\infty$ as $n\to\infty$. Such method has been discussed in previous literatures (e.g. \citet{white1989learning} and \citet{white1990connectionist}). In those papers, consistency of the neural network sieve estimators has been established under random designs. However, there are few results on the asymptotic distribution of the neural network sieve estimators, which will be established in this paper. Throughout this paper, we focus on the fixed design. \citet{hornik1989multilayer} showed that $\bigcup_n\mcal{F}_{r_n}$ is dense in $\mcal{F}$ under the sup-norm. But when considering the asymptotic properties of the sieve estimators, we use the pseudo-norm $\|f\|_n^2=n^{-1}\sum_{i=1}^nf^2(\mbf{x}_i)$ (see Proposition \ref{Prop: Pseudo-metric Parameter space} in the Appendix) defined on $\mcal{F}$ and $\mcal{F}_{r_n}$.

In section 2, we discuss the existence of neural network sieve estimators. The weak consistency and rate of convergence of the neural network sieve estimators will be established in section 3 and section 4, respectively. Section 5 focuses on the asymptotic distribution of the neural network sieve estimators. Simulation results are presented in section 6.

\textit{Notations}: Throughout the rest of the paper, bold font alphabetic letters and Greek letters are vectors. $C(\mcal{X})$ is the set of continuous functions defined on $\mcal{X}$. The symbol $\lesssim$ means ``bounded above up to a universal constant" and $a_n\sim b_n$ means $\frac{a_n}{b_n}\to1$ as $n\to\infty$. For a pseudo-metric space $(T,d)$, $N(\epsilon,T,d)$ is its covering number, which is the minimum number of $\epsilon$-balls needed to cover $T$. Its natural logarithm is the entropy number and is denoted by $H(\epsilon,T,d)$.

\section{Existence}
A natural question to ask is whether the sieve estimator based on neural networks exists. Before addressing this question, we first study some properties of $\mcal{F}_{r_n}$. Proposition \ref{Prop: Sigmoid-Lipschitz} shows that the sigmoid function is a Lipschitz function with Lipschitz constant $L=1/4$.

\begin{prop}\label{Prop: Sigmoid-Lipschitz}
A sigmoid function $\sigma(z)=e^z/(1+e^z)$ is a Lipschitz function on $\mbb{R}$ with Lipschitz constant $1/4$.
\end{prop}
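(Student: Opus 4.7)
The plan is to prove the Lipschitz bound by the mean value theorem, reducing the claim to a uniform bound on the derivative $\sigma'$ over $\mathbb{R}$. First I would compute
\[
\sigma'(z) = \frac{e^z(1+e^z) - e^z \cdot e^z}{(1+e^z)^2} = \frac{e^z}{(1+e^z)^2} = \sigma(z)\bigl(1-\sigma(z)\bigr),
\]
using the standard logistic identity. This rewriting is convenient because $\sigma(z) \in (0,1)$ for every $z \in \mathbb{R}$, so $\sigma'$ takes the form $t(1-t)$ for $t \in (0,1)$.

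Next I would maximize $t \mapsto t(1-t)$ on $[0,1]$: its derivative $1-2t$ vanishes at $t = 1/2$, giving the maximum value $1/4$. Hence $0 < \sigma'(z) \leq 1/4$ for all $z \in \mathbb{R}$, with equality attained at $z=0$ (where $\sigma(0) = 1/2$).

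Finally, since $\sigma$ is continuously differentiable on $\mathbb{R}$, for any $z_1, z_2 \in \mathbb{R}$ the mean value theorem yields some $\xi$ between them such that
\[
|\sigma(z_1) - \sigma(z_2)| = |\sigma'(\xi)| \cdot |z_1 - z_2| \leq \tfrac{1}{4}|z_1 - z_2|,
\]
which is exactly the Lipschitz bound with constant $L = 1/4$. There is no real obstacle here; the whole argument is a short calculus exercise, and the only thing to be slightly careful about is noting that the bound $\sigma(z)(1-\sigma(z)) \leq 1/4$ is tight (attained at $z=0$), so the constant $1/4$ cannot be improved.
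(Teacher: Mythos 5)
Your proposal is correct and follows essentially the same route as the paper: compute $\sigma'(z)=\sigma(z)(1-\sigma(z))$, bound it by $1/4$, and conclude via the mean value theorem. The only additions are the explicit maximization of $t(1-t)$ and the remark that the constant is sharp at $z=0$, which the paper leaves implicit.
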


\begin{proof}
For all $z_1,z_2\in\mbb{R}$, $\sigma(z)$ is continuous on $[z_1,z_2]$ and is differentiable on $(z_1,z_2)$. Note that
\begin{align*}
\sigma'(z) & =\sigma(z)(1-\sigma(z))\leq\frac{1}{4}\quad\forall z\in\mbb{R}.
\end{align*}
By using the Mean Value Theorem, we know that
$$
\sigma(z_1)-\sigma(z_2)=\sigma'(\lambda z_1+(1-\lambda)z_2)(z_1-z_2),
$$
for some $\lambda\in[0,1]$. Hence
$$
|\sigma(z_1)-\sigma(z_2)|=|\sigma'(\lambda z_1+(1-\lambda)z_2)||z_1-z_2|\leq\frac{1}{4}|z_1-z_2|,
$$
which means that $\sigma(z)$ is a Lipschitz function on $\mbb{R}$ with Lipschitz constant $1/4$.
\end{proof}

The second proposition provides an upper bound for the envelope function $\sup_{f\in\mcal{F}_{r_n}}|f|$. 
\begin{prop}\label{Prop: Envelope}
For each fixed $n$,
$$
\sup_{f\in\mcal{F}_{r_n}}\|f\|_\infty\leq V_n.
$$
\end{prop}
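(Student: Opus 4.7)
The plan is to bound $|f(\mbf{x})|$ pointwise for an arbitrary $f \in \mcal{F}_{r_n}$ and an arbitrary $\mbf{x} \in \mcal{X}$, then take the supremum over $\mbf{x}$ (to get the $L^\infty$ norm) and over $f \in \mcal{F}_{r_n}$. The key observation is that the sigmoid $\sigma(z) = e^z/(1+e^z)$ takes values in $(0,1)$ on all of $\mbb{R}$, so each hidden unit contributes a factor bounded by $1$ in absolute value regardless of the weights $\mbf{\gamma}_j$ and $\gamma_{0,j}$. This means the $M_n$ constraint on the first-layer weights plays no role in bounding the envelope; only the $\ell^1$ constraint on the output weights $\alpha_j$ matters.

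Concretely, I would write
\[
|f(\mbf{x})| \;\leq\; |\alpha_0| + \sum_{j=1}^{r_n} |\alpha_j| \, \bigl|\sigma(\mbf{\gamma}_j^T \mbf{x} + \gamma_{0,j})\bigr| \;\leq\; |\alpha_0| + \sum_{j=1}^{r_n} |\alpha_j|
\]
by the triangle inequality and the bound $|\sigma| \leq 1$. The right-hand side is exactly $\sum_{j=0}^{r_n} |\alpha_j|$, which by definition of $\mcal{F}_{r_n}$ is at most $V_n$. Since this bound is uniform in $\mbf{x}$, we obtain $\|f\|_\infty \leq V_n$, and since the bound does not depend on the particular choice of coefficients defining $f$, taking the supremum over $f \in \mcal{F}_{r_n}$ yields the claim.

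There is essentially no obstacle here; the only conceptual point to flag is that the argument is tight in the sense that the $V_n$ constraint on $\sum_{j=0}^{r_n}|\alpha_j|$ is the active one, while the $M_n$ constraint on the inner weights is irrelevant for the envelope (it will however enter later when controlling entropy or Lipschitz regularity). No appeal to Proposition \ref{Prop: Sigmoid-Lipschitz} is needed for this particular bound; only the range of $\sigma$ is used.
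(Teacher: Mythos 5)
Your proof is correct and follows essentially the same route as the paper's: apply the triangle inequality, bound each $|\sigma(\cdot)|$ by $1$, and invoke the $\ell^1$ constraint $\sum_{j=0}^{r_n}|\alpha_j|\leq V_n$, noting the bound is uniform in $\mbf{x}$ and $f$. Your additional remark that the $M_n$ constraint is irrelevant here is accurate but not needed.
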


\begin{proof}
For any $f\in\mcal{F}_{r_n}$ with a fixed $n$ and all $\mbf{x}\in\mcal{X}$, we have
\begin{align*}
|f(\mbf{x})| & =\left|\alpha_0+\sum_{j=1}^{r_n}\alpha_j\sigma\left(\mbf{\gamma}_j^T\mbf{x}+\gamma_{0,j}\right)\right|\\
	& \leq|\alpha_0|+\sum_{j=1}^{r_n}|\alpha_j|\sigma\left(\mbf{\gamma}_j^T\mbf{x}+\gamma_{0,j}\right)\leq\sum_{j=0}^{r_n}|\alpha_j|\leq V_n.
\end{align*}
Since the right hand side does not depend on $\mbf{x}$ and $f$, we get
$$
\sup_{f\in\mcal{F}_{r_n}}\|f\|_\infty=\sup_{f\in\mcal{F}_{r_n}}\sup_{\mbf{x}\in\mcal{X}}|f(\mbf{x})|\leq V_n.
$$
\end{proof}

Now we quote a general result from \citet{white1991some}. The theorem tells us that under some mild conditions, there exists a sieve approximate estimator and such an estimator is also measurable.

\begin{thm}[Theorem 2.2 in \citet{white1991some}]\label{Thm: ExistenceOfSieve}
Let $(\Omega,\mcal{A},\mbb{P})$ be a complete probability space and let $(\Theta,\rho)$ be a pseudo-metric space. Let $\{\Theta_n\}$ be a sequence of compact subsets of $\Theta$. Let $\mbb{Q}_n:\Omega\times\Theta_n\to\bar{\mbb{R}}$ be $\mcal{A}\otimes\mcal{B}(\Theta_n)/\mcal{B}(\bar{\mbb{R}})$-measurable, and suppose that for each $\omega\in\Omega$, $\mbb{Q}_n(\omega,\cdot)$ is lower semicontinuous on $\Theta_n$, $n=1,2,\ldots$. Then for each $n=1,2,\ldots$, there exists $\hat{\theta}_n:\Omega\to\Theta_n$, $\mcal{A}/\mcal{B}(\Theta_n)$-measurable such that for each $\omega\in\Omega$, $\mbb{Q}_n(\omega,\hat{\theta}_n(\omega))=\inf_{\theta\in\Theta_n}\mbb{Q}_n(\omega,\theta)$.
\end{thm}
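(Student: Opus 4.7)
The plan is to proceed in three steps: pointwise existence of a minimizer, measurability of the value function, and then a measurable selection argument that glues the two together.

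First, I would fix $\omega\in\Omega$ and argue pointwise. Since $\mbb{Q}_n(\omega,\cdot)$ is lower semicontinuous on the compact set $\Theta_n$, every sub-level set $\{\mbb{Q}_n(\omega,\cdot)\leq a\}$ is closed and hence compact in $\Theta_n$, so the infimum is attained (the usual Weierstrass argument for LSC functions). In particular the argmin set
$$ M(\omega):=\{\theta\in\Theta_n:\mbb{Q}_n(\omega,\theta)=v_n(\omega)\},\qquad v_n(\omega):=\inf_{\theta\in\Theta_n}\mbb{Q}_n(\omega,\theta), $$
is non-empty and closed for every $\omega$. Existence is therefore not the difficulty; the content of the theorem is choosing the minimizer in an $\mcal{A}/\mcal{B}(\Theta_n)$-measurable way.

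Second, I would establish $\mcal{A}$-measurability of $v_n$. A compact pseudo-metric space is totally bounded, so quotienting by the equivalence $\rho(\theta,\theta')=0$ produces a compact metric (hence Polish) space, which is enough to apply descriptive-set-theoretic tools; I treat $\Theta_n$ as Polish below without loss of generality. For each $a\in\bar{\mbb{R}}$ the set
$$ E_a:=\{(\omega,\theta)\in\Omega\times\Theta_n:\mbb{Q}_n(\omega,\theta)<a\} $$
lies in $\mcal{A}\otimes\mcal{B}(\Theta_n)$ by the joint measurability of $\mbb{Q}_n$, and $\{v_n<a\}$ is exactly the projection of $E_a$ onto $\Omega$. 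By the Projection Theorem for analytic sets, this projection is Suslin, hence universally measurable, and therefore belongs to $\mcal{A}$ because $(\Omega,\mcal{A},\mbb{P})$ is complete. Thus $v_n$ is $\mcal{A}$-measurable.

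Finally, the graph
$$ \mrm{Gr}(M)=\{(\omega,\theta)\in\Omega\times\Theta_n:\mbb{Q}_n(\omega,\theta)-v_n(\omega)\leq 0\} $$
is the preimage of $(-\infty,0]$ under a jointly measurable function, so it lies in $\mcal{A}\otimes\mcal{B}(\Theta_n)$, and by the first step it has non-empty $\omega$-sections. Aumann's measurable selection theorem (again exploiting completeness of $\mcal{A}$) then yields an $\mcal{A}/\mcal{B}(\Theta_n)$-measurable map $\hat{\theta}_n:\Omega\to\Theta_n$ with $\hat{\theta}_n(\omega)\in M(\omega)$ for every $\omega$, which is precisely the estimator called for in the conclusion.

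The main obstacle is the middle step. One might hope to use separability of $\Theta_n$ to write $v_n(\omega)=\inf_k\mbb{Q}_n(\omega,\theta_k)$ along a countable dense subset and conclude measurability trivially, but lower semicontinuity fails in the wrong direction for this: the LSC Weierstrass lemma only gives attainment of the infimum, not continuity of its value in $\theta$, and simple LSC examples show the infimum over a dense sequence can strictly exceed the infimum over $\Theta_n$. This is exactly where the hypothesis that $(\Omega,\mcal{A},\mbb{P})$ is \emph{complete} enters in an essential way, via the Projection Theorem and Aumann selection; the pseudo-metric (rather than metric) nature of $\rho$ is a minor issue handled by the quotient construction above.
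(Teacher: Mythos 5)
The paper does not prove this statement at all: it is imported verbatim as Theorem 2.2 of \citet{white1991some} and used as a black box, so there is no in-paper argument to compare yours against. Judged on its own, your proof is correct and follows the standard route by which such existence-and-measurability results are actually established: (i) the lower-semicontinuous Weierstrass argument on the compact set $\Theta_n$ gives non-empty closed argmin sets $M(\omega)$; (ii) the measurable projection theorem for Suslin sets, together with completeness of $(\Omega,\mcal{A},\mbb{P})$, gives $\mcal{A}$-measurability of the value function $v_n$; (iii) the Aumann--von Neumann selection theorem applied to the product-measurable graph of $M$ produces the measurable minimizer. Your remark that the naive countable-dense-subset argument for measurability of $v_n$ fails under mere lower semicontinuity is exactly right and identifies the real content of the theorem, and your reduction of the pseudo-metric space to its metric (hence Polish) quotient is legitimate because open balls, and therefore all Borel sets, are saturated under the relation $\rho(\theta,\theta')=0$. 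Two small points of hygiene: since $\mbb{Q}_n$ is $\bar{\mbb{R}}$-valued, the representation of $\mrm{Gr}(M)$ as the preimage of $(-\infty,0]$ under the difference $\mbb{Q}_n(\omega,\theta)-v_n(\omega)$ can hit an $\infty-\infty$ indeterminacy; it is cleaner to write $\mrm{Gr}(M)=\bigcap_{q\in\mbb{Q}}\left(\{(\omega,\theta):\mbb{Q}_n(\omega,\theta)<q\}\cup\{(\omega,\theta):v_n(\omega)\geq q\}\right)$, which is manifestly in $\mcal{A}\otimes\mcal{B}(\Theta_n)$. And in step (i) you should take a sequence $a_k\downarrow v_n(\omega)$ and invoke the finite intersection property of the nested compact sub-level sets, which also covers the case $v_n(\omega)=-\infty$. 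Neither point is a gap.
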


Note that 
\begin{align*}
\mbb{Q}_n(f) & =\frac{1}{n}\sum_{i=1}^n(y_i-f(\mbf{x}_i))^2\\
	& =\frac{1}{n}\sum_{i=1}^n(f_0(\mbf{x}_i)+\epsilon_i-f(\mbf{x}_i))^2\\
	& =\frac{1}{n}\sum_{i=1}^n(f(\mbf{x}_i)-f_0(\mbf{x}_i))^2-2\frac{1}{n}\sum_{i=1}^n\epsilon_i(f(\mbf{x}_i)-f_0(\mbf{x}_i))+\frac{1}{n}\sum_{i=1}^n\epsilon_i^2.
\end{align*}
Since the randomness only comes from $\epsilon_i$'s, it is clear that $\mbb{Q}_n$ is a measurable function and for a fixed $\omega$, $\mbb{Q}_n$ is continuous in $f$. Therefore, to show the existence of the sieve estimator, it suffices to show that $\mcal{F}_{r_n}$ is compact in $C(\mcal{X})$, which is proved in the following lemma.

\begin{lemma}\label{Lm: Compactness of NN Sieve Class}
Let $\mcal{X}$ be a compact subset of $\mbb{R}^d$. Then for each fixed $n$, $\mcal{F}_{r_n}$ is a compact set.
\end{lemma}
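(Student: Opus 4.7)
The plan is to exhibit $\mcal{F}_{r_n}$ as the continuous image of a compact parameter set and invoke the fact that the continuous image of a compact set is compact. Compactness in $C(\mcal{X})$ equipped with the sup-norm automatically gives compactness in the weaker pseudo-norm $\|\cdot\|_n$ used for the sieve estimator.

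First, I would introduce the parameter set
$$\Theta_n := \left\{\theta=(\alpha_0,\ldots,\alpha_{r_n},\gamma_{0,1},\ldots,\gamma_{0,r_n},\mbf{\gamma}_1,\ldots,\mbf{\gamma}_{r_n}) : \sum_{j=0}^{r_n}|\alpha_j|\leq V_n,\ \max_{1\leq j\leq r_n}\sum_{i=0}^d|\gamma_{i,j}|\leq M_n\right\}$$
viewed as a subset of $\mbb{R}^{(r_n+1)+r_n(d+1)}$. The two defining inequalities are non-strict conditions on continuous functions of the coordinates, so $\Theta_n$ is closed; the first constraint forces $|\alpha_j|\leq V_n$ and the second forces $|\gamma_{i,j}|\leq M_n$ coordinatewise, so $\Theta_n$ is bounded. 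By Heine--Borel, $\Theta_n$ is compact.

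Second, I would define the natural parametrization map $\Phi:\Theta_n\to C(\mcal{X})$ by $\Phi(\theta)(\mbf{x})=\alpha_0+\sum_{j=1}^{r_n}\alpha_j\sigma(\mbf{\gamma}_j^T\mbf{x}+\gamma_{0,j})$, observing that $\mcal{F}_{r_n}=\Phi(\Theta_n)$. The key step is to verify that $\Phi$ is continuous from the Euclidean topology on $\Theta_n$ to the sup-norm topology on $C(\mcal{X})$. Using $|\sigma|\leq 1$ and the telescoping identity $\alpha_j\sigma - \alpha_j'\sigma' = (\alpha_j-\alpha_j')\sigma + \alpha_j'(\sigma-\sigma')$, I would split
$$|\Phi(\theta)(\mbf{x})-\Phi(\theta')(\mbf{x})|\leq|\alpha_0-\alpha_0'|+\sum_{j=1}^{r_n}|\alpha_j-\alpha_j'|+\sum_{j=1}^{r_n}|\alpha_j'|\cdot|\sigma(\mbf{\gamma}_j^T\mbf{x}+\gamma_{0,j})-\sigma(\mbf{\gamma}_j'^T\mbf{x}+\gamma_{0,j}')|.$$
Combining Proposition \ref{Prop: Sigmoid-Lipschitz} (Lipschitz constant $1/4$), the compactness of $\mcal{X}$ (which yields $R:=\sup_{\mbf{x}\in\mcal{X}}\|\mbf{x}\|<\infty$), and the a priori bound $|\alpha_j'|\leq V_n$ from $\theta'\in\Theta_n$, the last sum is dominated by $\tfrac{V_n}{4}\sum_{j=1}^{r_n}\bigl(R\|\mbf{\gamma}_j-\mbf{\gamma}_j'\|+|\gamma_{0,j}-\gamma_{0,j}'|\bigr)$. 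Since the right-hand side is independent of $\mbf{x}$ and vanishes as $\theta'\to\theta$, we get $\|\Phi(\theta)-\Phi(\theta')\|_\infty\to 0$, proving continuity.

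Finally, $\mcal{F}_{r_n}=\Phi(\Theta_n)$ is compact in $C(\mcal{X})$ as the continuous image of a compact set. The only nontrivial piece is the continuity estimate for $\Phi$: because different $\theta$'s can produce the same function, $\mcal{F}_{r_n}$ is not literally a subset of Euclidean space, so we really do need the Lipschitz bound on $\sigma$ plus compactness of $\mcal{X}$ to turn parameter perturbations into uniform (in $\mbf{x}$) control on functions. Once those ingredients are in place, the rest is routine.
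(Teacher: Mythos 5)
Your proof is correct and follows essentially the same route as the paper: exhibit $\mcal{F}_{r_n}$ as the continuous image of a compact Euclidean parameter set, using the Lipschitz property of $\sigma$ and the compactness of $\mcal{X}$ to establish continuity of the parametrization map. The only (harmless) differences are that you prove continuity into the stronger sup-norm topology and then pass to $\|\cdot\|_n$, and that you take the parameter set to be the exact $\ell_1$-constrained region rather than the enclosing box $[-V_n,V_n]^{r_n+1}\times[-M_n,M_n]^{r_n(d+1)}$ used in the paper, which if anything makes the identification $\mcal{F}_{r_n}=\Phi(\Theta_n)$ cleaner.
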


\begin{proof}
For each fixed $n$, let $\mbf{\theta}_n=[\alpha_0,\ldots,\alpha_{r_n},\mbf{\gamma}_{0,1},\ldots,\gamma_{0,r_n},\mbf{\gamma}_1^T,\ldots,\mbf{\gamma}_{r_n}^T]^T$ belong to $[-V_n,V_n]^{r_n+1}\times[-M_n,M_n]^{r_n(d+1)}:=\Theta_n$. If $n$ is fixed, $\Theta_n$ is a bounded closed set and hence it is a compact set in $\mbb{R}^{r_n(d+2)+1}$. Consider a map
\begin{align*}
H:(\Theta_n,\|\cdot\|_2) & \to(\mcal{F}_{r_n},\|\cdot\|_n)\\
	\mbf{\theta}_n & \mapsto H(\mbf{\theta}_n)=\alpha_0+\sum_{j=1}^{r_n}\alpha_j\sigma\left(\mbf{\gamma}_j^T\mbf{x}+\gamma_{0,j}\right)
\end{align*}
Note that $\mcal{F}_{r_n}=H(\Theta_n)$. Therefore, to show that $\mcal{F}_{r_n}$ is a compact set, it suffices to show that $H$ is a continuous map due to the compactness of $\Theta_n$. Let $\mbf{\theta}_{1,n},\mbf{\theta}_{2,n}\in\Theta_n$, then
\begin{align*}
	& \|H(\mbf{\theta}_{1,n})-H(\mbf{\theta}_{2,n})\|_n^2\\
   = & \frac{1}{n}\sum_{i=1}^n\left[\alpha_0^{(1)}+\sum_{j=1}^{r_n}\alpha_j^{(1)}\sigma\left(\mbf{\gamma}_j^{(1)^T}\mbf{x}_i+\gamma_{0,j}^{(1)}\right)-\alpha_0^{(2)}-\sum_{j=1}^{r_n}\alpha_j^{(2)}\sigma\left(\mbf{\gamma}_j^{(2)^T}\mbf{x}_i+\gamma_{0,j}^{(2)}\right)\right]^2\\
	\leq & \frac{1}{n}\sum_{i=1}^n\left[\left|\alpha_0^{(1)}-\alpha_0^{(2)}\right|+\sum_{j=1}^{r_n}\left|\alpha_j^{(1)}\sigma\left(\mbf{\gamma}_j^{(1)^T}\mbf{x}_i+\gamma_{0,j}^{(1)}\right)-\alpha_j^{(2)}\sigma\left(\mbf{\gamma}_j^{(2)^T}\mbf{x}_i+\gamma_{0,j}^{(2)}\right)\right|\right]^2\\
	= & \frac{1}{n}\sum_{i=1}^n\left[\left|\alpha_0^{(1)}-\alpha_0^{(2)}\right|+\sum_{j=1}^{r_n}|\alpha_j^{(1)}|\left|\sigma\left(\mbf{\gamma}_j^{(1)^T}\mbf{x}_i+\gamma_{0,j}^{(1)}\right)-\sigma\left(\mbf{\gamma}_j^{(2)^T}\mbf{x}_i+\gamma_{0,j}^{(2)}\right)\right|+\right.\\
	& \hspace{8cm}\left.|\alpha_j^{(1)}-\alpha_j^{(2)}|\sigma\left(\mbf{\gamma}_j^{(2)^T}\mbf{x}_i+\gamma_{0,j}^{(2)}\right)\right]^2\\
	\leq & \frac{1}{n}\sum_{i=1}^n\left[\sum_{j=0}^{r_n}|\alpha_j^{(1)}-\alpha_j^{(2)}|+\frac{V_n}{4}\sum_{j=1}^{r_n}\left|\left(\mbf{\gamma}_j^{(1)}-\mbf{\gamma}_j^{(2)}\right)^T\mbf{x}_i\right|+\left|\gamma_{0,j}^{(1)}-\gamma_{0,j}^{(2)}\right|\right]^2\\
	\leq & \left[\sum_{j=0}^{r_n}|\alpha_j^{(1)}-\alpha_j^{(2)}|+\frac{V_n}{4}(1\vee\|\mbf{x}\|_\infty)\sum_{j=1}^{r_n}\left\|\mbf{\gamma}_j^{(1)}-\mbf{\gamma}_j^{(2)}\right\|_1+\left|\gamma_{0,j}^{(1)}-\gamma_{0,j}^{(2)}\right|\right]^2\\
	\leq & \left(\frac{V_n}{4}(1\vee\|\mbf{x}\|_\infty)\right)^2[r_n(d+1)]\|\mbf{\theta}_{1,n}-\mbf{\theta}_{2,n}\|_2^2.
\end{align*}
Hence, for any $\epsilon>0$, we choose $\delta=\epsilon/\left(\frac{V_n}{4}(1\vee\|\mbf{x}\|_\infty)\sqrt{r_n(d+1)}\right)$,. When $\|\mbf{\theta}_{1,n}-\mbf{\theta}_{2,n}\|_2<\delta$, we have
$$
\|H(\mbf{\theta}_{1,n})-H(\mbf{\theta}_{2,n})\|_n<\epsilon,
$$
which implies that $H$ is a continuous map and hence $\mcal{F}_{r_n}$ is a compact set for each fixed $n$.
\end{proof}

As a corollary of Lemma \ref{Lm: Compactness of NN Sieve Class} and Theorem \ref{Thm: ExistenceOfSieve}, we can easily obtain the existence of sieve estimator.
\begin{corollary}
Based on the notations above, for each $n=1,2,\ldots$, there exists $\hat{f}_n:\Omega\to\mcal{F}_{r_n}$, $\mcal{A}/\mcal{B}(\mcal{F}_{r_n})$-measurable such that $\mbb{Q}_n(\hat{f}_n(\omega))=\inf_{f\in\mcal{F}_{r_n}}\mbb{Q}_n(f)$.
\end{corollary}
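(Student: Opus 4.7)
The plan is to invoke Theorem \ref{Thm: ExistenceOfSieve} directly, taking the pseudo-metric space to be $(\mcal{F},\|\cdot\|_n)$ and the compact subset to be $\mcal{F}_{r_n}$. Three hypotheses must be verified for each fixed $n$: (i) compactness of $\mcal{F}_{r_n}$, (ii) joint $\mcal{A}\otimes\mcal{B}(\mcal{F}_{r_n})$-measurability of $\mbb{Q}_n$, and (iii) lower semicontinuity of $f\mapsto\mbb{Q}_n(\omega,f)$ on $\mcal{F}_{r_n}$ for each fixed $\omega$. Item (i) is exactly the conclusion of Lemma \ref{Lm: Compactness of NN Sieve Class}, so that step is already done.

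For items (ii) and (iii) I would start from the decomposition displayed just before Lemma \ref{Lm: Compactness of NN Sieve Class},
$$
\mbb{Q}_n(\omega,f) = \|f - f_0\|_n^2 - \frac{2}{n}\sum_{i=1}^n \epsilon_i(\omega)\bigl(f(\mbf{x}_i) - f_0(\mbf{x}_i)\bigr) + \frac{1}{n}\sum_{i=1}^n \epsilon_i(\omega)^2.
$$
The first term is deterministic and depends on $f$ only through the sample values $\bigl(f(\mbf{x}_i)\bigr)_{i=1}^n$; the third depends only on $\omega$; the middle term is a finite sum of products of the $\mcal{A}$-measurable random variable $\epsilon_i(\omega)$ with the $\|\cdot\|_n$-continuous evaluation $f\mapsto f(\mbf{x}_i)$. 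Each summand is therefore a Carathéodory map (measurable in $\omega$, continuous in $f$), and such maps are jointly $\mcal{A}\otimes\mcal{B}(\mcal{F}_{r_n})$-measurable, giving (ii). For (iii), I would apply the reverse triangle inequality to $\|f - f_0\|_n^2 - \|g - f_0\|_n^2$, Cauchy--Schwarz to the cross term, and the envelope bound $\sup_{f\in\mcal{F}_{r_n}}\|f\|_\infty \leq V_n$ from Proposition \ref{Prop: Envelope} to obtain the Lipschitz estimate
$$
|\mbb{Q}_n(\omega,f) - \mbb{Q}_n(\omega,g)| \leq \left(2V_n + 2\|f_0\|_n + \frac{2}{\sqrt{n}}\Bigl(\sum_{i=1}^n \epsilon_i(\omega)^2\Bigr)^{1/2}\right)\|f - g\|_n,
$$
which is stronger than the required lower semicontinuity.

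There is no substantive obstacle; the only point worth double-checking is that $\|\cdot\|_n$ is a pseudo-metric and not a metric, so one must confirm that $\mbb{Q}_n(\omega,\cdot)$ is constant on $\|\cdot\|_n$-equivalence classes. This is immediate from the decomposition above, since $\|f - g\|_n = 0$ forces $f(\mbf{x}_i) = g(\mbf{x}_i)$ for every $i$ and hence $\mbb{Q}_n(\omega,f) = \mbb{Q}_n(\omega,g)$. With (i)--(iii) in hand, Theorem \ref{Thm: ExistenceOfSieve} produces the $\mcal{A}/\mcal{B}(\mcal{F}_{r_n})$-measurable minimizer $\hat{f}_n:\Omega\to\mcal{F}_{r_n}$ asserted by the corollary.
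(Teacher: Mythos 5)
Your proposal is correct and follows essentially the same route as the paper: the paper also reduces the corollary to Theorem \ref{Thm: ExistenceOfSieve} by noting, via the same decomposition of $\mbb{Q}_n$, that measurability and continuity in $f$ are immediate, and then supplies compactness of $\mcal{F}_{r_n}$ through Lemma \ref{Lm: Compactness of NN Sieve Class}. Your additional details (the Carath\'eodory-map justification, the explicit Lipschitz bound, and the check on $\|\cdot\|_n$-equivalence classes) merely make explicit what the paper treats as clear.
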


\section{Consistency}\label{Sec: Consistency}
In this section, we are going to show the consistency of the neural network sieve estimator. The consistency result leans heavily on the following Uniform Law of Large Numbers. We start by considering a simple case with $V_n\equiv V$ for all $n$. In such a case, $\bigcup_n\mcal{F}_{r_n}$ is not dense in $\mcal{F}$ but rather in a subset of $\mcal{F}$ with functions satisfying a certain smoothness condition.

\begin{lemma}\label{Lm: NNSieveULLN}
Let $\epsilon_1,\ldots,\epsilon_n$ be i.i.d. sub-Gaussian random variables with sub-Gaussian parameter $\sigma_0$. If $[r_n(d+2)+1]\log[r_n(d+2)+1]=o(n)$, we have
$$
\sup_{f\in\mcal{F}_{r_n}}\left|\mbb{Q}_n(f)-Q_n(f)\right|\xrightarrow{p^*}0.
$$
\end{lemma}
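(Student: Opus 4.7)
The natural approach is to separate $\mbb{Q}_n(f)-Q_n(f)$ into an $f$-free piece plus a linear-in-$\epsilon$ stochastic process in $f$, and then control the latter uniformly via a chaining argument. First I would write, using $y_i=f_0(\mbf{x}_i)+\epsilon_i$,
$$\mbb{Q}_n(f)-Q_n(f) \;=\; -\frac{2}{n}\sum_{i=1}^n\epsilon_i\bigl(f(\mbf{x}_i)-f_0(\mbf{x}_i)\bigr) \;+\; \frac{1}{n}\sum_{i=1}^n(\epsilon_i^2-\sigma^2).$$
The second piece is free of $f$ and tends to $0$ in probability by the weak law of large numbers (sub-Gaussianity implies $\mrm{Var}(\epsilon^2)<\infty$). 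So the task reduces to showing $\sup_{f\in\mcal{F}_{r_n}}|\nu_n(f)|\xrightarrow{p^*}0$, where $\nu_n(f):=\frac{1}{n}\sum_i\epsilon_i(f(\mbf{x}_i)-f_0(\mbf{x}_i))$.

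Next, since the design is fixed, $\nu_n(f)-\nu_n(g)$ is a weighted sum of independent sub-Gaussian variables with weights $(f-g)(\mbf{x}_i)/n$, so it is itself sub-Gaussian with parameter $\sigma_0\|f-g\|_n/\sqrt{n}$. The process $\{\nu_n(f):f\in\mcal{F}_{r_n}\}$ is therefore sub-Gaussian with respect to $\|\cdot\|_n$, and a Dudley-type maximal inequality gives
$$\mbb{E}^*\sup_{f\in\mcal{F}_{r_n}}|\nu_n(f)| \;\lesssim\; \frac{\sigma_0}{\sqrt{n}}\int_0^{D_n}\sqrt{H\bigl(\epsilon,\mcal{F}_{r_n},\|\cdot\|_n\bigr)}\,d\epsilon,$$
where $D_n\le 2V$ by Proposition \ref{Prop: Envelope} (using $V_n\equiv V$). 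For the entropy I would recycle the Lipschitz estimate established in the proof of Lemma \ref{Lm: Compactness of NN Sieve Class}, $\|H(\mbf{\theta}_1)-H(\mbf{\theta}_2)\|_n\le L_n\|\mbf{\theta}_1-\mbf{\theta}_2\|_2$ with $L_n=\frac{V}{4}(1\vee\|\mbf{x}\|_\infty)\sqrt{r_n(d+1)}$. Combining this with a standard volume-ratio covering bound for the box $\Theta_n\subset\mbb{R}^{r_n(d+2)+1}$ gives $H(\epsilon,\mcal{F}_{r_n},\|\cdot\|_n)\lesssim [r_n(d+2)+1]\log(K_n/\epsilon)$ for some $K_n$ polynomial in $r_n$, $M_n$, $V$, so that the entropy integral is of order $D_n\sqrt{[r_n(d+2)+1]\log K_n}$. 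Dividing by $\sqrt{n}$ and applying Markov's inequality converts the resulting mean bound into convergence in outer probability, provided $[r_n(d+2)+1]\log K_n=o(n)$.

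The main obstacle will be making the entropy estimate tight enough that the hypothesis $[r_n(d+2)+1]\log[r_n(d+2)+1]=o(n)$, rather than the stronger $[r_n(d+2)+1]\log n=o(n)$, suffices. The crude parameter-space covering introduces an extra $\log(M_nV)$ factor in $\log K_n$, which must be absorbed either by an implicit polynomial growth condition on $M_n$ in $r_n$ or, more cleanly, by invoking a pseudo-dimension / VC-subgraph bound for sigmoidal networks whose entropy is controlled by the envelope $V$ alone and by the number of parameters, independently of $M_n$.
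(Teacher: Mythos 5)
Your plan is essentially the paper's proof: the same decomposition into the $f$-free quadratic term (handled by the weak law of large numbers) plus the linear sub-Gaussian process, controlled uniformly by an entropy-integral bound over $\mcal{F}_{r_n}$ (the paper uses the exponential tail bound of Corollary 8.3 in \citet{van2000empirical} where you use Dudley plus Markov, an immaterial difference here). The $M_n$-free covering bound you correctly identify as the crux is exactly what the paper invokes, namely Theorem 14.5 of \citet{anthony2009neural}, which depends only on $V$ and the parameter count $r_n(d+2)+1$ and thus makes the stated hypothesis suffice.
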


\begin{proof}
For any $\delta>0$, we have
\begin{align*}
 & \mbb{P}^*\left(\sup_{f\in\mcal{F}_{r_n}}|\mbb{Q}_n(f)-Q_n(f)|>\delta\right)\\
= & \mbb{P}^*\left(\sup_{f\in\mcal{F}_{r_n}}\left|\frac{1}{n}\sum_{i=1}^n\epsilon_i^2-\sigma^2-2\frac{1}{n}\sum_{i=1}^n\epsilon_i\left(f(\mbf{x}_i)-f_0(\mbf{x}_i)\right)\right|>\delta\right)\\
	\leq & \mbb{P}\left(\left|\frac{1}{n}\sum_{i=1}^n\epsilon_i^2-\sigma^2\right|>\frac{\delta}{2}\right)+\mbb{P}^*\left(\sup_{f\in\mcal{F}_{r_n}}\left|\frac{1}{n}\sum_{i=1}^n\epsilon_i(f(\mbf{x}_i)-f_0(\mbf{x}_i))\right|>\frac{\delta}{4}\right)\\
	:= & (I)+(II).
\end{align*}
For (I), based on the Weak Law of Large Numbers, we know that there exists $N_1>0$ such that for all $n\geq N_1$ we have
$$
(I)=\mbb{P}\left(\left|\frac{1}{n}\sum_{i=1}^n\epsilon_i^2-\sigma^2\right|>\frac{\delta}{2}\right)<\frac{\delta}{2}.
$$
Now, we are going to evaluate (II). From the sub-Gaussianity of $\epsilon_1,\ldots,\epsilon_n$, we know that $\epsilon_i(f(\mbf{x}_i)-f_0(\mbf{x}_i))$ is also sub-Gaussian with mean 0 and sub-Gaussian parameter $\sigma_0|f(\mbf{x}_i)-f_0(\mbf{x}_i)|$. Hence, by using the Hoeffding inequality,
\begin{align*}
\mbb{P}\left(\left|\frac{1}{n}\sum_{i=1}^n\epsilon_i(f(\mbf{x}_i)-f_0(\mbf{x}_i))\right|>\frac{\delta}{4}\right) & =\mbb{P}\left(\left|\sum_{i=1}^n\epsilon_i(f(\mbf{x}_i)-f_0(\mbf{x}_i))\right|>\frac{n\delta}{4}\right)\\
	& \leq2\exp\left\{-\frac{n^2\delta^2}{32\sigma_0^2\sum_{i=1}^n(f(\mbf{x}_i)-f_0(\mbf{x}_i))^2}\right\}.
\end{align*}
From Proposition \ref{Prop: Envelope}, we know that $\sup_{f\in\mcal{F}_{r_n}}\|f\|_n\leq V$. Hence, based on Corollary 8.3 in \citet{van2000empirical}, (II) will have an exponential bound if there exists some constant $C$, $\delta>0$ and $\sigma>0$ satisfying $V>\delta/\sigma$ and
\begin{equation}\label{Eq: DudleyEntropyIntegralBound}
\sqrt{n}\delta\geq2C\left(\int_{\delta/(8\sigma)}^{V}H^{1/2}(u,\mcal{F}_{r_n},\|\cdot\|_n)\mrm{d}u\vee V\right).
\end{equation}
Now, we are going to show that (\ref{Eq: DudleyEntropyIntegralBound}) holds in our case. It follows from Theorem 14.5 in \citet{anthony2009neural}, which gives an upper bound of the covering number for $\mcal{F}_{r_n}$,
$$
N(\epsilon,\mcal{F}_{r_n},\|\cdot\|_\infty)\leq\left(\frac{4e[r_n(d+2)+1]\left(\frac{1}{4}V\right)^2}{\epsilon\left(\frac{1}{4}V-1\right)}\right)^{r_n(d+2)+1}:=\tilde{A}_{r_n,d,V}\epsilon^{-[r_n(d+2)+1]},
$$
where $\tilde{A}_{r_n,d,V}=\left(e[r_n(d+2)+1]V^2/(V-4)\right)^{r_n(d+2)+1}$. By letting 
\begin{align*}
A_{r_n,d,V} & =\log\tilde{A}_{r_n,d,V}-[r_n(d+2)+1]\\
	& =[r_n(d+2)+1]\left(\log\frac{e[r_n(d+2)+1]V^2}{V-4}-1\right)\\
	& =[r_n(d+2)+1]\log\frac{[r_n(d+2)+1]V^2}{V-4},
\end{align*}
and noting that $V^2-eV+4e\geq0$ for all $V$, we have $\log\frac{[r_n(d+2)+1]V^2}{V-4}\geq\log\frac{V^2}{V-4}\geq\log\frac{e(V-4)}{V-4}=1$. Then,
\begin{align*}
H(\epsilon,\mcal{F}_{r_n},\|\cdot\|_\infty) & =\log N(\epsilon,\mcal{F}_{r_n},\|\cdot\|_\infty)\\
	& =\log\tilde{A}_{r_n,d,V}+[r_n(d+2)+1]\log\frac{1}{\epsilon}\\
	& \leq A_{r_n,d,V}+[r_n(d+2)+1]\frac{1}{\epsilon}\quad(\mrm{since }\log x\leq x-1\mrm{ for all }x>0)\\
	& \leq A_{r_n,d,V}\left(1+\frac{1}{\epsilon}\right).
\end{align*}
Note that
$$
\|f\|_n^2=\frac{1}{n}\sum_{i=1}^nf^2(\mbf{x}_i)\leq\left(\sup_{\mbf{x}}|f(\mbf{x})|\right)^2=\|f\|_\infty^2,
$$
we have $H(\epsilon,\mcal{F}_{r_n},\|\cdot\|_n)\leq H(\epsilon,\mcal{F}_{r_n},\|\cdot\|_\infty)$. Then
\begin{align*}
\int_{\delta/(8\sigma)}^{V}H^{1/2}(\epsilon,\mcal{F}_{r_n},\|\cdot\|_n)\mrm{d}\epsilon & \leq A_{r_n,d,V}^{1/2}\int_0^{V}\left(1+\frac{1}{\epsilon}\right)^{1/2}\mrm{d}\epsilon\\
	& = A_{r_n,d,V}^{1/2}\left[\int_0^1\left(1+\frac{1}{\epsilon}\right)^{1/2}\mrm{d}\epsilon+\int_1^{V}\left(1+\frac{1}{\epsilon}\right)^{1/2}\mrm{d}\epsilon\right]\\
	& \leq A_{r_n,d,V}^{1/2}\left[\sqrt{2}\int_0^1\epsilon^{-\frac{1}{2}}\mrm{d}\epsilon+\sqrt{2}(V-1)\right]\\
	& \leq A_{r_n,d,V}^{1/2}\left[2\sqrt{2}+2\sqrt{2}(V-1)\right]\\
	& =2\sqrt{2}A_{r_n,d,V}^{1/2}V.
\end{align*}
Clearly, $2\sqrt{2}A_{r_n,d,V}^{1/2}V\geq V$. Under the assumption of $[r_n(d+2)+1]\log[r_n(d+2)+1]=o(n)$, for any $\delta>0$, there exists $N_2>0$ such that for all $n\geq N_2$,
$$
4\sqrt{2}V\left(\frac{1}{n}A_{r_n,d,V}\right)^{1/2}<\frac{\delta}{4},
$$
i.e. (\ref{Eq: DudleyEntropyIntegralBound}) holds with $C=1$ and $n\geq N_2$. Hence, based on Corollary 8.3 in \citet{van2000empirical}, for $n\geq N_2$,
\begin{equation}\label{Eq: van2000Corollary8.3}
\mbb{P}^*\left(\sup_{f\in\mcal{F}_{r_n}}\left|\frac{1}{n}\sum_{i=1}^n\epsilon_i(f(\mbf{x}_i)-f_0(\mbf{x}_i))\right|>\frac{\delta}{4}\wedge\frac{1}{n}\sum_{i=1}^n\epsilon_i^2\leq\sigma^2\right)\leq\exp\left\{-\frac{n\delta^2}{64V^2}\right\}.
\end{equation}
Since $\int_0^{V}H^{1/2}(\epsilon,\mcal{F}_{r_n},\|\cdot\|_n)\mrm{d}\epsilon<\infty$, we can take $\sigma\to\infty$ in (\ref{Eq: van2000Corollary8.3}) to get
$$
\mbb{P}^*\left(\sup_{f\in\mcal{F}_{r_n}}\left|\frac{1}{n}\sum_{i=1}^n\epsilon_i(f(\mbf{x}_i)-f_0(\mbf{x}_i))\right|>\frac{\delta}{4}\right)\leq\exp\left\{-\frac{n\delta^2}{64V^2}\right\}.
$$
Let $N_3=\frac{64V^2}{\delta^2}\log\frac{2}{\delta}$, then for $n\geq\max\{N_2,N_3\}$, we have
$$
(II)=\mbb{P}^*\left(\sup_{f\in\mcal{F}_{r_n}}\left|\frac{1}{n}\sum_{i=1}^n\epsilon_i(f(\mbf{x}_i)-f_0(\mbf{x}_i))\right|>\frac{\delta}{4}\right)\leq\frac{\delta}{2}.
$$
Thus, we conclude that for any $\delta>0$, by taking $n\geq\max\{N_1,N_2,N_3\}$, we have
$$
\mbb{P}^*\left(\sup_{f\in\mcal{F}_{r_n}}|\mbb{Q}_n(f)-Q_n(f)|>\delta\right)<\delta,
$$
which proves the desired result.
\end{proof}

\begin{remark}
Lemma \ref{Lm: NNSieveULLN} shows that if we have a fixed number of features, the desired Uniform Law of Large Numbers holds when the number of hidden units in the neural network sieve does not grow too fast. 
\end{remark}

Now, we are going to extend the result to a more general case. In Lemma \ref{Lm: NNSieveULLN}, we assume that the errors $\epsilon_1,\ldots,\epsilon_n$ are i.i.d. sub-Gaussian and $V_n\equiv V$. In the following lemma, we are going to relax both restrictions.

\begin{lemma}\label{Lm: NNSieveULLN-2}
Under the assumption of
$$
[r_n(d+2)+1]V_n^2\log(V_n[r_n(d+2)+1]=o(n),\mrm{ as }n\to\infty,
$$
we have
$$
\sup_{f\in\mcal{F}_{r_n}}|\mbb{Q}_n(f)-Q_n(f)|\xrightarrow{p^*}0,\mrm{ as } n\to\infty.
$$
\end{lemma}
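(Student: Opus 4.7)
The plan is to preserve the two-term decomposition used in the proof of Lemma \ref{Lm: NNSieveULLN},
\[
\mbb{Q}_n(f)-Q_n(f)=\Bigl[\tfrac{1}{n}\sum_{i=1}^n\epsilon_i^2-\sigma^2\Bigr]-\tfrac{2}{n}\sum_{i=1}^n\epsilon_i\bigl(f(\mbf{x}_i)-f_0(\mbf{x}_i)\bigr),
\]
and address separately the weakening of the error assumption and the growth of the envelope $V_n$. The $f$-free piece again tends to zero in probability, this time by the ordinary weak law of large numbers for $\{\epsilon_i^2\}$, which requires only $\mbb{E}[\epsilon^2]=\sigma^2<\infty$. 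Everything therefore reduces to showing that $\sup_{f\in\mcal{F}_{r_n}}|n^{-1}\sum_i\epsilon_i(f(\mbf{x}_i)-f_0(\mbf{x}_i))|=o_{p^*}(1)$.

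To replace the missing sub-Gaussian hypothesis, I would introduce a truncation level $T_n\to\infty$ to be tuned later and write $\epsilon_i=\epsilon_i'+\epsilon_i''$, where $\epsilon_i'=\epsilon_i\mathbf{1}_{\{|\epsilon_i|\leq T_n\}}-\mbb{E}[\epsilon_i\mathbf{1}_{\{|\epsilon_i|\leq T_n\}}]$ is centered and bounded by $2T_n$ (hence sub-Gaussian with parameter $O(T_n)$), while $\mbb{E}|\epsilon_i''|\leq 2\mbb{E}[|\epsilon|\mathbf{1}_{\{|\epsilon|>T_n\}}]\leq 2\sigma^2/T_n\to 0$ by dominated convergence. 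The cross term splits accordingly. The tail piece is dominated, via Proposition \ref{Prop: Envelope}, by $(V_n+\|f_0\|_\infty)\,n^{-1}\sum_i|\epsilon_i''|$, whose expectation is $O(V_n/T_n)$, and Markov's inequality makes it $o_p(1)$ as long as $T_n/V_n\to\infty$. The truncated piece is handled by rerunning the argument of Lemma \ref{Lm: NNSieveULLN} essentially verbatim, with $V$ replaced by $V_n$ and $\sigma_0$ replaced by $T_n$: Theorem 14.5 of Anthony and Bartlett still yields $H(\epsilon,\mcal{F}_{r_n},\|\cdot\|_\infty)\leq A_{r_n,d,V_n}(1+1/\epsilon)$ with $A_{r_n,d,V_n}\sim[r_n(d+2)+1]\log(V_n[r_n(d+2)+1])$; the Dudley-type entropy integral is bounded by $2\sqrt{2}\,V_n A_{r_n,d,V_n}^{1/2}$; and Corollary 8.3 of van de Geer then delivers an exponential tail bound once the entropy-integral condition $\sqrt{n}\,\delta\gtrsim V_n A_{r_n,d,V_n}^{1/2}$ is met, which is exactly what the hypothesis $[r_n(d+2)+1]V_n^2\log(V_n[r_n(d+2)+1])=o(n)$ guarantees for every fixed $\delta>0$ and large enough $n$.

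The main obstacle is the joint calibration of $T_n$: it must grow fast enough that $V_n/T_n\to 0$ (so the tail piece vanishes) yet slowly enough that the bounded piece's exponent, which carries a factor of $T_n^2V_n^2$ if one applies Hoeffding directly, still diverges. The slack in the rate condition leaves room for such a choice, for instance by taking $T_n$ a slowly growing multiple of $V_n$; alternatively, one can sharpen the bound on the truncated piece by using a Bernstein-type inequality, whose exponent is governed by the variance proxy $V_n^2$ rather than the range product $T_nV_n$, thereby absorbing the $T_n$ dependence into a lower-order term and matching the stated hypothesis exactly. Once $T_n$ is fixed, combining the tail-piece Markov bound, the truncated-piece exponential bound, and the WLLN applied to $\{\epsilon_i^2\}$ shows that the full decomposition is $o_{p^*}(1)$, completing the proof.
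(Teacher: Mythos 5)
Your route is genuinely different from the paper's. The paper does not truncate the errors at all: it applies the symmetrization inequality to introduce Rademacher multipliers $\xi_i$, conditions on the $\epsilon_i$ so that $f\mapsto n^{-1/2}\sum_i\xi_i\epsilon_i(f-f_0)(\mbf{x}_i)$ is a separable sub-Gaussian process with respect to the random metric $d(f,g)\leq\|f-g\|_\infty\left(n^{-1}\sum_i\epsilon_i^2\right)^{1/2}$, controls the scale $n^{-1}\sum_i\epsilon_i^2\leq\sigma^2+1$ eventually a.s.\ by the SLLN, applies the Dudley-type maximal inequality (Corollary 2.2.8 of van der Vaart and Wellner) anchored at $f_n^*=\pi_{r_n}f_0$ (whence the universal approximation theorem enters), and finally passes from the conditional expectation to the outer expectation by a generalized dominated convergence argument before invoking Markov. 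The payoff of that design is that the error distribution enters only through its second moment, so no sub-Gaussianity and no truncation is ever needed; the payoff of yours is that it stays closer to the concrete exponential-inequality machinery of Lemma \ref{Lm: NNSieveULLN}.

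The concern with your version is that the step you rely on is exactly the one you leave unexecuted. Rerunning Lemma \ref{Lm: NNSieveULLN} ``verbatim'' with $\sigma_0$ replaced by $T_n$ puts $T_n^2$ into the denominator of the Hoeffding exponent, so the resulting uniform bound requires roughly $[r_n(d+2)+1]T_n^2V_n^2\log(V_n[r_n(d+2)+1])=o(n)$; with $T_n\gtrsim V_n$ this demands $V_n^4$ (up to logs) to be $o(n)$, which the stated hypothesis does not supply (it only controls $r_nV_n^2\log(\cdot)$). Your proposed repair --- Bernstein in place of Hoeffding --- is the right idea and the arithmetic does close: the variance proxy is $\sigma^2n\|f-f_0\|_n^2\lesssim nV_n^2$ and the range term is $T_nV_n$ times the deviation, so with $T_n\asymp V_n$ (note $T_n/V_n\to\infty$ is not needed, since $\mbb{E}[|\epsilon|\mbb{I}_{\{|\epsilon|>T_n\}}]=o(1/T_n)$ by dominated convergence, making the tail piece $o(V_n/T_n)=o(1)$) both contributions are of order $nV_n^2$ and the required entropy condition reduces to the lemma's hypothesis. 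But Bernstein is a pointwise inequality; to get the supremum over $\mcal{F}_{r_n}$ you still must run a chaining or finite-net union bound against the entropy $H(\eta,\mcal{F}_{r_n},\|\cdot\|_\infty)\lesssim[r_n(d+2)+1]\log(\tilde{C}_{r_n,d,V_n}/\eta)$ and verify the calibration there, and none of that is written down. As it stands the proof has a concrete hole at its central step, even though the strategy is salvageable.
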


\begin{proof}
As in the proof of Lemma \ref{Lm: NNSieveULLN}, it suffices to show that 
\begin{equation}\label{Eq: weightedSumTail}
\mbb{P}^*\left(\sup_{f\in\mcal{F}_{r_n}}\left|\frac{1}{n}\sum_{i=1}^n\epsilon_i(f(\mbf{x}_i)-f_0(\mbf{x}_i))\right|>\frac{\delta}{4}\right)\to0,\mrm{ as }n\to\infty.
\end{equation}
By using the Markov's inequality, (\ref{Eq: weightedSumTail}) holds if we can show
$$
\mbb{E}^*\left[\sup_{f\in\mcal{F}_{r_n}}\left|\frac{1}{n}\sum_{i=1}^n\epsilon_i(f(\mbf{x}_i)-f_0(\mbf{x}_i))\right|\right]\to0,\mrm{ as }n\to\infty.
$$
Note that $\mbb{E}[\epsilon]=0$ and each $f\in\mcal{F}_{r_n}$ has its corresponding parametrization $\mbf{\theta}_n$. Since $\mbf{\theta}_n$ is in a compact set, there exists a sequence $\mbf{\theta}_{n,k}\to\mbf{\theta}_n$ as $k\to\infty$ with $\mbf{\theta}_{n,k}\in\mbb{Q}^{r_n(d+2)+1}\cap([-V_n,V_n]^{r_n+1}\times[-M_n,M_n]^{r_n(d+1)})$. Each $\mbf{\theta}_{n,k}$ corresponds to a function $f_k\in\mcal{F}_{r_n}$. Based on continuity, we have $f_k(\mbf{x})\to f(\mbf{x})$ for each $\mbf{x}\in\mcal{X}$. From Example 2.3.4 in \citet{van1996weak}, we know that $\mcal{F}_{r_n}$ is $P$-measurable. Based on symmetrization inequality, we have
\begin{align*}
 & \mbb{E}^*\left[\sup_{f\in\mcal{F}_{r_n}}\left|\frac{1}{n}\sum_{i=1}^n\epsilon_i(f(\mbf{x}_i)-f_0(\mbf{x}_i))\right|\right]\\
 \leq & 2\mbb{E}_\epsilon\mbb{E}_\xi\left[\sup_{f\in\mcal{F}_{r_n}}\left|\frac{1}{n}\sum_{i=1}^n\xi_i\epsilon_i\left(f(\mbf{x}_i)-f_0(\mbf{x}_i)\right)\right|\right],
\end{align*}
where $\xi_1,\ldots,\xi_n$ are i.i.d. Rademacher random variables independent of $\epsilon_1,\ldots,\epsilon_n$. Based on the Strong Law of Large Numbers, there exists $N_1>0$, such that for all $n\geq N_1$,
$$
\frac{1}{n}\sum_{i=1}^n\epsilon_i^2<\sigma^2+1,\mrm{ a.s.}
$$
For fixed $\epsilon_1,\ldots,\epsilon_n$, $\sum_{i=1}^n\xi_i\epsilon_i(f(\mbf{x}_i)-f_0(\mbf{x}_i))$ is a sub-Gaussian process indexed by $f\in\mcal{F}_{r_n}$. Suppose that $(\Xi, \mcal{C}, \mu)$ is the probability space on which $\xi_1,\ldots,\xi_n$ are defined and let $Y(f,\omega)=\sum_{i=1}^n\xi_i(\omega)\epsilon_i(f(\mbf{x}_i)-f_0(\mbf{x}_i))$ with $f\in\mcal{F}_{r_n}$ and $\omega\in\Xi$. As we have shown above, we have $f_k\to f$ and by continuity, $Y(f_k,\omega)\to Y(f,\omega)$ for any $\omega\in\Xi$. This shows that $\{Y(f,\omega),f\in\mcal{F}_{r_n}\}$ is a separable sub-Gaussian process. Hence Corollary 2.2.8 in \citet{van1996weak} implies that there exists a universal constant $K$ and for any $f_n^*\in\mcal{F}_{r_n}$ with $n\geq N_1$,
\begin{align*}
\mbb{E}_\xi & \left[\sup_{f\in\mcal{F}_{r_n}}\left|\frac{1}{n}\sum_{i=1}^n\xi_i\epsilon_i(f(\mbf{x}_i))-f_0(\mbf{x}_i))\right|\right]\\
	& =\mbb{E}_\xi\left[\frac{1}{\sqrt{n}}\sup_{f\in\mcal{F}_{r_n}}\left|\frac{1}{\sqrt{n}}\sum_{i=1}^n\xi_i\epsilon_i(f(\mbf{x}_i)-f_0(\mbf{x}_i))\right|\right]\\
	& \leq\mbb{E}_\xi\left[\left|\frac{1}{n}\sum_{i=1}^n\xi_i\epsilon_i(f_n^*(\mbf{x}_i)-f_0(\mbf{x}_i))\right|\right]+K\int_0^\infty\sqrt{\frac{\log N\left(\frac{1}{2}\eta,\mcal{F}_{r_n},d\right)}{n}}\mrm{d}\eta\\
	& =\mbb{E}_\xi\left[\left|\frac{1}{n}\sum_{i=1}^n\xi_i\epsilon_i(f_n^*(\mbf{x}_i)-f_0(\mbf{x}_i))\right|\right]+K\int_0^{2V_n}\sqrt{\frac{\log N\left(\frac{1}{2}\eta,\mcal{F}_{r_n},d\right)}{n}}\mrm{d}\eta\\
	& \leq \mbb{E}_\xi\left[\left|\frac{1}{n}\sum_{i=1}^n\xi_i\epsilon_i(f_n^*(\mbf{x}_i)-f_0(\mbf{x}_i))\right|\right]+K\int_0^{2V_n}\sqrt{\frac{\log N\left(\frac{1}{2\sqrt{\sigma^2+1}}\eta,\mcal{F}_{r_n},\|\cdot\|_\infty\right)}{n}}\mrm{d}\eta,
\end{align*}
where the second equality follows from Proposition \ref{Prop: Envelope}. For $f,g\in\mcal{F}_{r_n}$,
\begin{align*}
d(f,g) & =\left[\sum_{i=1}^n\left(\frac{1}{\sqrt{n}}\epsilon_i(f(\mbf{x}_i)-f_0(\mbf{x}_i))-\frac{1}{\sqrt{n}}\epsilon_i(g(\mbf{x}_i)-f_0(\mbf{x}_i))\right)^2\right]^{1/2}\\
	& =\left(\frac{1}{n}\sum_{i=1}^n\epsilon_i^2(f(\mbf{x}_i)-g(\mbf{x}_i))^2\right)^{1/2}
\end{align*}
so that the last inequality follows by noting that
$$
d(f,g)\leq\|f-g\|_\infty\left(\frac{1}{n}\sum_{i=1}^n\epsilon_i^2\right)^{1/2}.
$$
We then evaluate these two terms. For the first term, for $n\geq N_1$, by Cauchy-Schwarz inequality, we have
\begin{align*}
\mbb{E}_\xi\left[\left|\frac{1}{n}\sum_{i=1}^n\xi_i\epsilon_i(f_n^*(\mbf{x}_i)-f_0(\mbf{x}_i))\right|\right] & \leq\frac{1}{n}\sum_{i=1}^n|\epsilon_i||f_n^*(\mbf{x}_i)-f_0(\mbf{x}_i)|\\
	& \leq\left(\frac{1}{n}\sum_{i=1}^n\epsilon_i^2\right)^{1/2}\left(\frac{1}{n}\sum_{i=1}^n(f_n^*(\mbf{x}_i)-f_0(\mbf{x}_i))^2\right)^{1/2}\\
	& \leq\sqrt{\sigma^2+1}\sup_{\mbf{x}\in\mcal{X}}|f_n^*(\mbf{x})-f_0(\mbf{x})|,\mrm{ a.s.}
\end{align*}
By choosing $f_n^*=\pi_{r_n}f_0$ and using the universal approximation theorem introduced by \citet{hornik1989multilayer}, we know that $\sup_{x\in\mcal{X}}|f_n^*(\mbf{x}_i)-f_0(\mbf{x}_i)|\to0$ as $n\to\infty$. Therefore, for any $\zeta>0$, there exists $N_2>0$, such that for all $n\geq N_2$,
$$
\sup_{x\in\mcal{X}}|f_n^*(\mbf{x}_i)-f_0(\mbf{x}_i)|<\frac{\zeta}{\sqrt{\sigma^2+1}}.
$$ 
By choosing $n\geq N_1\vee N_2$, we get
$$
\mbb{E}_\xi\left[\left|\frac{1}{n}\sum_{i=1}^n\xi_i\epsilon_i(f_n^*(\mbf{x}_i)-f_0(\mbf{x}_i))\right|\right]<\zeta\mrm{ a.s.}
$$
For the second term, we use the same bound from Theorem 14.5 in \citet{anthony2009neural} as we did in the proof of Lemma \ref{Lm: NNSieveULLN}: 
\begin{align*}
N\left(\frac{1}{2\sqrt{\sigma^2+1}}\eta,\mcal{F}_{r_n},\|\cdot\|_\infty\right) & \leq\left(\frac{8\sqrt{\sigma^2+1}e[r_n(d+2)+1]\left(\frac{1}{4}V_n\right)^2}{\eta\left(\frac{1}{4}V_n-1\right)}\right)^{r_n(d+2)+1}\\
	& :=\tilde{B}_{r_n,d,V_n}\eta^{-[r_n(d+2)+1]},
\end{align*}
where $\tilde{B}_{r_n,d,V_n}=\left(2\sqrt{\sigma^2+1}e[r_n(d+2)+1]V_n^2/(V_n-4)\right)^{r_n(d+2)+1}$. Let
\begin{align*}
B_{r_n,d,V_n} & =\log\tilde{B}_{r_n,d,V_n}-[r_n(d+2)+1]\\
	& =[r_n(d+2)+1]\left(\log\frac{2\sqrt{\sigma^2+1}e[r_n(d+2)+1]V_n^2}{V_n-4}-1\right)\\
	& =[r_n(d+2)+1]\left(\log\frac{[r_n(d+2)+1]V_n^2}{V_n-4}+\log(2\sqrt{\sigma^2+1})\right)\\
	& \leq2[r_n(d+2)+1]\log\frac{[r_n(d+2)+1]V_n^2}{V_n-4}, \mrm{ for all }n\geq N_1\vee N_3,
\end{align*}
where $N_3$ is chosen to satisfy $r_n(d+2)+1\geq2\sqrt{\sigma^2+1}$. The last inequality then follows by noting that $V_n^2-V_n+4\geq0$ for all $V_n$ so that $\log\frac{[r_n(d+1)+1]V_n^2}{V_n-4}\geq\log\frac{2\sqrt{\sigma^2+1}(V_n-4)}{V_n-4}=\log(2\sqrt{\sigma^2+1})$. We also have 
\begin{align*}
H\left(\frac{1}{2\sqrt{\sigma^2+1}}\eta,\mcal{F}_{r_n},\|\cdot\|_\infty\right) & =\log N\left(\frac{1}{2\sqrt{\sigma^2+1}}\eta,\mcal{F}_{r_n},\|\cdot\|_\infty\right)\\
	& =\log\tilde{B}_{r_n,d,V_n}+[r_n(d+2)+1]\log\frac{1}{\eta}\\
	& \leq B_{r_n,d,V_n}+[r_n(d+2)+1]\frac{1}{\eta}\\
	& \leq B_{r_n,d,V_n}\left(1+\frac{1}{\eta}\right),
\end{align*}
and hence for all $n\geq N_1\vee N_3$,
\begin{align*}
\int_0^{2V_n} & H^{1/2}\left(\frac{1}{2\sqrt{\sigma^2+1}}\eta,\mcal{F}_{r_n},\|\cdot\|_\infty\right)\mrm{d}\eta \\
	& \leq B_{r_n,d,V_n}^{1/2}\int_0^{2V_n}\left(1+\frac{1}{\eta}\right)^{1/2}\mrm{d}\eta\\
	& =B_{r_n,d,V_n}^{1/2}\left[\int_0^1\left(1+\frac{1}{\eta}\right)^{1/2}\mrm{d}\eta+\int_1^{2V_n}\left(1+\frac{1}{\eta}\right)^{1/2}\mrm{d}\eta\right]\\
	& \leq B_{r_n,d,V_n}^{1/2}\left[\sqrt{2}\int_0^1\eta^{-1/2}\mrm{d}\eta+\sqrt{2}(2V_n-1)\right]\\
	& \leq 4\sqrt{2}B_{r_n,d,V_n}^{1/2}V_n,
\end{align*}
which implies that
\begin{align*}
\int_0^{2V_n}\sqrt{\frac{H\left(\frac{1}{2\sqrt{\sigma^2+1}}\eta,\mcal{F}_{r_n},\|\cdot\|_\infty\right)}{n}}\mrm{d}\eta & \leq4\sqrt{2}n^{-1/2}B_{r_n,d,V_n}^{1/2}V_n\\
	& \sim8\sqrt{\frac{[r_n(d+2)+1]V_n^2\log(V_n[r_n(d+2)+1])}{n}},
\end{align*}
where the last part follows by noting that $\log\frac{V_n^2}{V_n-4}\sim\log V_n$. Under the assumption given in the Lemma, there exists $N_4>0$, such that for all $n\geq N_4$, we have
$$
\sqrt{\frac{[r_n(d+2)+1]V_n^2\log(V_n[r_n(d+2)+1])}{n}}<\frac{\zeta}{8}.
$$
Therefore, by choosing $n\geq N_1\vee N_2\vee N_3\vee N_4$, we get
$$
\mbb{E}_\xi\left[\sup_{f\in\mcal{F}_{r_n}}\left|\frac{1}{n}\sum_{i=1}^n\xi_i\epsilon_i(f(\mbf{x}_i)-f_0(\mbf{x}_i))\right|\right]<2\zeta\mrm{ a.s.},
$$
i.e. $\mbb{E}_\xi\left[\sup_{f\in\mcal{F}_{r_n}}\left|\frac{1}{n}\sum_{i=1}^n\xi_i\epsilon_i(f(\mbf{x}_i)-f_0(\mbf{x}_i))\right|\right]\to0$ a.s.. Moreover, based on what we have shown, for a sufficiently large $n$, we have
\begin{align*}
\mbb{E}_\xi\left[\sup_{f\in\mcal{F}_{r_n}}\left|\frac{1}{n}\sum_{i=1}^n\xi_i\epsilon_i(f(\mbf{x}_i)-f_0(\mbf{x}_i))\right|\right]\leq & \sqrt{\sigma^2+1}\|\pi_{r_n}f_0-f_0\|_\infty+\\
	& 4\sqrt{2}KB_{r_n,d,V_n}^{1/2}n^{-1/2}V_n\to0,\mrm{ a s.}.
\end{align*}
Since $\mbb{E}_\epsilon\left[\sqrt{\sigma^2+1}\|\pi_{r_n}f_0-f_0\|_\infty+4\sqrt{2}KB_{r_n,d,V_n}^{1/2}n^{-1/2}V_n\right]=\sqrt{\sigma^2+1}\|\pi_{r_n}f_0-f_0\|_\infty+4\sqrt{2}KB_{r_n,d,V_n}^{1/2}n^{-1/2}V_n\to0<\infty$, by using the Generalized Dominated Convergence Theorem, we know that 
\begin{align*}
\mbb{E}^* & \left[\sup_{f\in\mcal{F}_{r_n}}\left|\frac{1}{n}\sum_{i=1}^n\epsilon_i(f(\mbf{x}_i)-f_0(\mbf{x}_i))\right|\right]\\
 & \leq2\mbb{E}_\epsilon\mbb{E}_\xi\left[\sup_{f\in\mcal{F}_{r_n}}\left|\frac{1}{n}\sum_{i=1}^n\xi_i\epsilon_i\left(f(\mbf{x}_i)-f_0(\mbf{x}_i)\right)\right|\right]\to0,
\end{align*}
which completes the proof.
\end{proof}

Based on the above lemmas, we are ready to state the theorem on the consistency of neural network sieve estimators.

\begin{thm}\label{Thm: NNSieveConsistency}
Under the notation given above, if 
\begin{equation}\label{Eq: RateCondition}
[r_n(d+2)+1]V_n^2\log(V_n[r_n(d+2)+1]=o(n),\mrm{ as }n\to\infty,
\end{equation}
then
$$
\|\hat{f}_n-f_0\|_n\xrightarrow{p}0.
$$
\end{thm}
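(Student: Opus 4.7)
The plan is to reduce the consistency claim to the Uniform Law of Large Numbers already established in Lemma \ref{Lm: NNSieveULLN-2}, combined with the sieve approximation property coming from the Universal Approximation Theorem of \citet{hornik1989multilayer}. This is the standard Wald-type consistency argument for sieve M-estimators adapted to the fixed-design, empirical norm setting.

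First I would record the identity
\[
Q_n(f)-\sigma^2=\frac{1}{n}\sum_{i=1}^n\bigl(f(\mbf{x}_i)-f_0(\mbf{x}_i)\bigr)^2=\|f-f_0\|_n^2,
\]
so that the quantity I want to control is exactly $Q_n(\hat{f}_n)-\sigma^2$. Next, Lemma \ref{Lm: NNSieveULLN-2} gives, under the rate condition (\ref{Eq: RateCondition}),
\[
\sup_{f\in\mcal{F}_{r_n}}\bigl|\mbb{Q}_n(f)-Q_n(f)\bigr|\xrightarrow{p^*}0.
\]

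Second, I would pick a sieve approximation $\pi_{r_n}f_0\in\mcal{F}_{r_n}$ using the universal approximation theorem. Since $\bigcup_n\mcal{F}_{r_n}$ is dense in $C(\mcal{X})$ under the sup-norm and the bounds $V_n,M_n,r_n$ diverge, one has $\|\pi_{r_n}f_0-f_0\|_\infty\to 0$, hence also $\|\pi_{r_n}f_0-f_0\|_n\to 0$. Combined with the approximate-sieve defining inequality (\ref{Eq: SieveEstimator}), this yields
\[
\mbb{Q}_n(\hat{f}_n)\leq\mbb{Q}_n(\pi_{r_n}f_0)+\mcal{O}_p(\eta_n).
\]

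Finally, I would chain the bounds. Since both $\hat{f}_n$ and $\pi_{r_n}f_0$ lie in $\mcal{F}_{r_n}$, the ULLN gives $\mbb{Q}_n(\hat{f}_n)=Q_n(\hat{f}_n)+o_p(1)$ and $\mbb{Q}_n(\pi_{r_n}f_0)=Q_n(\pi_{r_n}f_0)+o_p(1)$. Therefore
\[
\|\hat{f}_n-f_0\|_n^2=Q_n(\hat{f}_n)-\sigma^2\leq Q_n(\pi_{r_n}f_0)-\sigma^2+o_p(1)=\|\pi_{r_n}f_0-f_0\|_n^2+o_p(1),
\]
and the right-hand side tends to $0$ in probability by the approximation step. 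Since $\|\hat{f}_n-f_0\|_n^2\geq 0$, the squeeze gives $\|\hat{f}_n-f_0\|_n^2\xrightarrow{p}0$, and taking square roots finishes the proof.

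There is no real obstacle beyond what has already been handled: the heavy lifting (covering-number bound for $\mcal{F}_{r_n}$, symmetrization, and the entropy integral) is inside Lemma \ref{Lm: NNSieveULLN-2}; the only delicate point to be careful about is that $f_0$ itself is typically not a member of $\mcal{F}_{r_n}$, which is why one must compare $\hat{f}_n$ to the sieve approximant $\pi_{r_n}f_0\in\mcal{F}_{r_n}$ rather than to $f_0$ directly when invoking the definition of the approximate sieve estimator.
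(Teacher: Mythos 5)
Your proof is correct, and its ingredients are the same ones the paper uses: the uniform law of large numbers of Lemma \ref{Lm: NNSieveULLN-2} under the rate condition (\ref{Eq: RateCondition}), plus the denseness of the sieve via the universal approximation theorem. The only real difference is packaging: the paper verifies the hypotheses of a general sieve-consistency theorem (well-separatedness of the population minimizer, i.e.\ $\inf_{\|f-f_0\|_n\geq\epsilon}Q_n(f)-Q_n(f_0)\geq\epsilon^2$, compactness of $\mcal{F}_{r_n}$ from Lemma \ref{Lm: Compactness of NN Sieve Class}, and the ULLN) and then cites Corollary 2.6 of \citet{white1991some} to conclude, whereas you unpack that black box into the explicit basic-inequality chain $\|\hat f_n-f_0\|_n^2=Q_n(\hat f_n)-\sigma^2\leq Q_n(\pi_{r_n}f_0)-\sigma^2+o_p(1)$. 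Your version is self-contained and arguably more transparent, since the quadratic structure of $Q_n$ makes the well-separatedness condition automatic; the paper's version is shorter and reuses standard machinery. Two minor points to tidy up: carry the $\mcal{O}_p(\eta_n)$ slack from (\ref{Eq: SieveEstimator}) through the final display (it is $o_p(1)$ because $\eta_n\to0$, but it should appear), and note that the conclusion of Lemma \ref{Lm: NNSieveULLN-2} is convergence in outer probability, so the final statement is $\|\hat f_n-f_0\|_n\xrightarrow{p^*}0$ unless measurability of the supremum has been addressed, exactly as in the paper.
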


\begin{proof}
Since $Q$ is continuous at $f_0\in\mcal{F}$ and $Q(f_0)=\sigma^2<\infty$, for any $\epsilon>0$, we have
$$
\inf_{f:\|f-f_0\|_n\geq\epsilon}Q_n(f)-Q_n(f_0)=\inf_{f:\|f-f_0\|_n\geq\epsilon}\frac{1}{n}\sum_{i=1}^n(f(\mbf{x}_i)-f_0(\mbf{x}_i))^2\geq\epsilon^2>0.
$$
Hence, based on Lemma \ref{Lm: Compactness of NN Sieve Class}, Lemma \ref{Lm: NNSieveULLN-2} and Corollary 2.6 in \citet{white1991some}, we have
$$
\|\hat{f}_n-f_0\|_n\xrightarrow{p}0.
$$
\end{proof}

\begin{remark}\label{Rmk: Some Simple Case for Consistency}
We discuss the condition (\ref{Eq: RateCondition}) in Theorem \ref{Thm: NNSieveConsistency} via some simple examples here. If $\alpha_j=\mcal{O}(1)$ for $j=1,\ldots,r_n$, then $V_n=\mcal{O}(r_n)$ and
$$
[r_n(d+2)+1]V_n^2\log(V_n[r_n(d+2)+1])=\mcal{O}(r_n^3\log r_n).
$$
Therefore, a possible growth rate for the number of hidden units in a neural network is $r_n=o\left((n/\log n)^{1/3}\right)$. On the other hand, if we have a slow growth rate for the number of hidden units in the neural network, such as $r_n=\log V_n$, then we have
$$
[r_n(d+2)+1]V_n^2\log(V_n[r_n(d+2)+1])=\mcal{O}((V_n\log V_n)^2).
$$ 
Hence, a possible growth rate for the upper bound of the weights from the hidden layer to the output layer is $V_n=o\left(n^{1/2}/\log n\right)$.
\end{remark}

\section{Rate of Convergence}
To obtain the rate of convergence for neural network sieves, we apply Theorem 3.4.1 in van der Vaart and Wellner (1996)\cite{van1996weak}.

\begin{lemma}\label{Lm: Theorem 3.4.1 vdvw1996 Condition 1}
Let $f_n^*=\pi_{r_n}f_0\in\mcal{F}_{r_n}$. Given the above notations, for every $n$ and $\delta>8\|f_n^*-f_0\|_n$, we have
$$
\sup_{\frac{\delta}{2}<\|f-f_n^*\|_n\leq\delta, f\in\mcal{F}_{r_n}}Q_n(f_n^*)-Q_n(f)\lesssim-\delta^2.
$$
\end{lemma}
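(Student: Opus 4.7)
The plan is to observe that $Q_n$ has an explicit closed form in the pseudo-norm $\|\cdot\|_n$, which reduces the claim to a straightforward triangle-inequality computation; no empirical process machinery is needed here.

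First, I would unpack $Q_n$. Since $\mbb{E}[\epsilon_i]=0$ and $\mbb{E}[\epsilon_i^2]=\sigma^2$, the introduction already records that
\[
Q_n(f) \;=\; \frac{1}{n}\sum_{i=1}^n (f(\mbf{x}_i)-f_0(\mbf{x}_i))^2 + \sigma^2 \;=\; \|f-f_0\|_n^2 + \sigma^2.
\]
Hence for any $f\in\mcal{F}_{r_n}$,
\[
Q_n(f_n^*) - Q_n(f) \;=\; \|f_n^*-f_0\|_n^2 - \|f-f_0\|_n^2,
\]
so the task becomes showing that $\|f-f_0\|_n^2 - \|f_n^*-f_0\|_n^2 \gtrsim \delta^2$ uniformly on the shell $\delta/2 < \|f-f_n^*\|_n \leq \delta$.

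Next, I would apply the triangle inequality in the $\|\cdot\|_n$ pseudo-norm to bound $\|f-f_0\|_n$ from below:
\[
\|f-f_0\|_n \;\geq\; \|f-f_n^*\|_n - \|f_n^*-f_0\|_n \;>\; \frac{\delta}{2} - \|f_n^*-f_0\|_n.
\]
The hypothesis $\delta > 8\|f_n^*-f_0\|_n$ gives $\|f_n^*-f_0\|_n < \delta/8$, so $\|f-f_0\|_n > 3\delta/8$ and therefore $\|f-f_0\|_n^2 > 9\delta^2/64$. Combining with $\|f_n^*-f_0\|_n^2 < \delta^2/64$ yields
\[
Q_n(f_n^*)-Q_n(f) \;<\; \frac{\delta^2}{64} - \frac{9\delta^2}{64} \;=\; -\frac{\delta^2}{8},
\]
uniformly over the shell. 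Taking the supremum over the indicated set preserves this bound, giving $\sup \leq -\delta^2/8 \lesssim -\delta^2$.

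There is essentially no obstacle; the only subtlety is making sure the constant $8$ in the hypothesis $\delta > 8\|f_n^*-f_0\|_n$ is used correctly to separate the cross term from the main $\delta^2$ term (any constant strictly greater than $2$ would work, but $8$ gives the clean bound $-\delta^2/8$). Since the argument does not invoke the structure of $\mcal{F}_{r_n}$ beyond $f_n^*\in\mcal{F}_{r_n}$, it is a general fact about least-squares criterion functions centered at approximations of $f_0$.
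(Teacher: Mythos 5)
Your proof is correct and follows essentially the same route as the paper's: both reduce $Q_n(f_n^*)-Q_n(f)$ to $\|f_n^*-f_0\|_n^2-\|f-f_0\|_n^2$ and exploit the triangle inequality for the pseudo-norm $\|\cdot\|_n$ on the shell, with your version being slightly more direct (the paper first establishes the intermediate bound $\tfrac14\|f-f_n^*\|_n^2\le\|f-f_0\|_n^2-\|f_n^*-f_0\|_n^2$). One minor quibble with your closing remark: the threshold constant must exceed $4$, not $2$ --- with $\delta>c\|f_n^*-f_0\|_n$ your argument gives $\|f-f_0\|_n^2-\|f_n^*-f_0\|_n^2>\delta^2\bigl[(c-2)^2-4\bigr]/(4c^2)$, which is positive only for $c>4$ --- though this does not affect the proof of the stated lemma.
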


\begin{proof}
Note that
\begin{align*}
Q_n(f_n^*)-Q_n(f) & =\frac{1}{n}\sum_{i=1}^n(f_n^*(\mbf{x}_i)-f_0(\mbf{x}_i))^2+\sigma^2-\frac{1}{n}\sum_{i=1}^n(f(\mbf{x}_i)-f_0(\mbf{x}_i))^2-\sigma^2\\
	& =\|f_n^*-f_0\|_n^2-\|f-f_0\|_n^2.
\end{align*}
In order to show the result, we first provide an upper bound for $Q_n(f_n^*)-Q_n(f)$ in terms of $\|f-f_n^*\|_n$. Due to the fact that $\|\cdot\|_n$ is a pseudo-norm, the triangle inequality gives
\begin{align*}
\|f-f_n^*\|_n & \leq\|f-f_0\|_n+\|f_n^*-f_0\|_n\\
	& =\|f-f_0\|_n-\|f_n^*-f_0\|_n+2\|f_n^*-f_0\|_n.
\end{align*}
Therefore, we have
$$
\|f-f_0\|_n-\|f_n^*-f_0\|_n\geq\|f-f_n^*\|-2\|f_n^*-f_0\|_n,
$$
so that for every $f$ satisfying $\|f-f_n^*\|_n^2\geq 16\|f_n^*-f_0\|_n^2$, i.e., $\|f-f_n^*\|_n\geq4\|f_n^*-f_0\|_n$, we have
$$
\|f-f_0\|_n-\|f_n^*-f_0\|_n\geq\|f-f_n^*\|_n-\frac{1}{2}\|f-f_n^*\|_n=\frac{1}{2}\|f-f_n^*\|_n\geq0.
$$
By squaring both sides, we obtain
\begin{align*}
\frac{1}{4}\|f-f_n^*\|_n^2 & \leq\|f-f_0\|_n^2+\|f_n^*-f_0\|_n^2-2\|f-f_0\|_n\|f_n^*-f_0\|_n\\
	& \leq\|f-f_0\|_n^2+\|f_n^*-f_0\|_n^2-2\|f_n^*-f_0\|_n^2\\
	& =\|f-f_0\|_n^2-\|f_n^*-f_0\|_n^2,
\end{align*}
and hence
\begin{align*}
\sup_{\frac{\delta}{2}<\|f-f_n^*\|_n\leq\delta, f\in\mcal{F}_{r_n}}Q_n(f_n^*)-Q_n(f) & \leq\sup_{\|f-f_n^*\|_n>\frac{\delta}{2},f\in\mcal{F}_{r_n}}\|f_n^*-f_0\|_n^2-\|f-f_0\|_n^2\\
	& \leq\sup_{\|f-f_n^*\|_n>\frac{\delta}{2},f\in\mcal{F}_{r_n}}\left(-\frac{1}{4}\|f-f_n^*\|_n^2\right)\\
	& \lesssim-\delta^2.
\end{align*}
\end{proof}

\begin{lemma}\label{Lm: Theorem 3.4.1 vdvw1996 Condition 2}
For every sufficiently large $n$ and $\delta>8\|f_n^*-f_0\|_n$, we have
$$
\mbb{E}^*\left[\sup_{\frac{\delta}{2}<\|f-f_n^*\|_n\leq\delta, f\in\mcal{F}_{r_n}}\sqrt{n}\left[(\mbb{Q}_n-Q_n)(f_n^*)-(\mbb{Q}_n-Q_n)(f)\right]^+\right]\lesssim\int_0^\delta\sqrt{\log N(\eta,\mcal{F}_{r_n},\|\cdot\|_\infty)}\mrm{d}\eta
$$
\end{lemma}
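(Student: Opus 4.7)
The plan is to rewrite $\sqrt{n}[(\mbb{Q}_n-Q_n)(f_n^*)-(\mbb{Q}_n-Q_n)(f)]^+$ as a single centered weighted empirical sum in the errors, symmetrize to introduce Rademacher weights, and bound the resulting Rademacher process by Dudley's entropy integral after converting its intrinsic metric to $\|\cdot\|_\infty$. Expanding the squared loss with $y_i=f_0(\mbf{x}_i)+\epsilon_i$ yields
$$
(\mbb{Q}_n-Q_n)(f)=-\frac{2}{n}\sum_{i=1}^n\epsilon_i(f(\mbf{x}_i)-f_0(\mbf{x}_i))+\frac{1}{n}\sum_{i=1}^n\epsilon_i^2-\sigma^2,
$$
so the $f_0$-centered and the $\epsilon_i^2-\sigma^2$ pieces cancel in the difference, leaving $(\mbb{Q}_n-Q_n)(f_n^*)-(\mbb{Q}_n-Q_n)(f)=\frac{2}{n}\sum_{i=1}^n\epsilon_i(f-f_n^*)(\mbf{x}_i)$. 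Since $[x]^+\le|x|$, it suffices to bound $\mbb{E}^*\sup_{f\in\mcal{F}_{r_n}^\delta}(2/\sqrt{n})\bigl|\sum_i\epsilon_i(f-f_n^*)(\mbf{x}_i)\bigr|$ after enlarging the shell to $\mcal{F}_{r_n}^\delta:=\{f\in\mcal{F}_{r_n}:\|f-f_n^*\|_n\le\delta\}$.

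The measurability of $\mcal{F}_{r_n}$ (Example 2.3.4 in \citet{van1996weak}, already exploited in the proof of Lemma \ref{Lm: NNSieveULLN-2}) then permits the standard symmetrization inequality at the cost of a universal constant and i.i.d.\ Rademacher variables $\xi_1,\dots,\xi_n$ independent of $\epsilon$. Conditional on $(\mbf{x},\epsilon)$, the Rademacher process $f\mapsto n^{-1/2}\sum_i\xi_i\epsilon_i(f-f_n^*)(\mbf{x}_i)$ is centered and separable sub-Gaussian in $f$ with intrinsic metric
$$
d_\epsilon(f,g):=\Bigl(\frac{1}{n}\sum_{i=1}^n\epsilon_i^2(f(\mbf{x}_i)-g(\mbf{x}_i))^2\Bigr)^{1/2},
$$
and Corollary 2.2.8 in \citet{van1996weak} furnishes $\mbb{E}_\xi\sup_{f\in\mcal{F}_{r_n}^\delta}|\cdot|\lesssim\int_0^{D_\epsilon}\sqrt{\log N(\eta,\mcal{F}_{r_n}^\delta,d_\epsilon)}\,\mrm{d}\eta$, where $D_\epsilon$ denotes the $d_\epsilon$-diameter of $\mcal{F}_{r_n}^\delta$.

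The final step is to pass from $d_\epsilon$ to $\|\cdot\|_\infty$ and cap the upper endpoint at $\delta$. From the bound $d_\epsilon(f,g)\le(n^{-1}\sum_i\epsilon_i^2)^{1/2}\|f-g\|_\infty$ and the SLLN $n^{-1}\sum_i\epsilon_i^2\to\sigma^2$ a.s., I would work on the event $\{n^{-1}\sum_i\epsilon_i^2\le\sigma^2+1\}$ (whose probability tends to one) to obtain $N(\eta,\mcal{F}_{r_n}^\delta,d_\epsilon)\le N(\eta/\sqrt{\sigma^2+1},\mcal{F}_{r_n},\|\cdot\|_\infty)$, after which the change of variables $u=\eta/\sqrt{\sigma^2+1}$ collapses the estimate to the advertised $\int_0^\delta\sqrt{\log N(u,\mcal{F}_{r_n},\|\cdot\|_\infty)}\,\mrm{d}u$, up to a fixed factor absorbed in $\lesssim$. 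The delicate point is controlling $D_\epsilon$ by a constant multiple of $\delta$: the naive estimate $d_\epsilon(f,f_n^*)\le(\max_i|\epsilon_i|)\|f-f_n^*\|_n\le(\max_i|\epsilon_i|)\delta$ would leak an unwanted $\sqrt{\log n}$ factor upon taking $\mbb{E}_\epsilon$, so one must combine the SLLN-based truncation with the envelope $\|f-f_n^*\|_\infty\le 2V_n$ and the localization condition $\delta>8\|f_n^*-f_0\|_n$ to secure the clean order-$\delta$ diameter that lets the integral endpoint be exactly $\delta$.
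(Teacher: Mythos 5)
Your proposal follows essentially the same route as the paper's proof: the same cancellation reducing the difference to $\frac{2}{n}\sum_{i=1}^n\epsilon_i(f-f_n^*)(\mbf{x}_i)$, the same bound $[x]^+\leq|x|$ followed by symmetrization and Corollary 2.2.8 of van der Vaart and Wellner, and the same comparison $d_\epsilon(f,g)\leq\left(n^{-1}\sum_{i=1}^n\epsilon_i^2\right)^{1/2}\|f-g\|_\infty$ on the almost-sure event $\{n^{-1}\sum_{i=1}^n\epsilon_i^2\leq\sigma^2+1\}$. The one step you honestly flag as delicate --- justifying that the entropy integral's upper endpoint can be taken to be (a constant multiple of) $\delta$ when the shell is defined via $\|\cdot\|_n$ but the covering is taken in $\|\cdot\|_\infty$ --- is precisely the step the paper itself dispatches with the one-line remark that the last inequality holds ``since $f_n^*\in\mcal{F}_{r_n}$ for a large enough $n$,'' so your argument is no less complete than the published one.
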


\begin{proof}
Note that
\begin{align*}
(\mbb{Q}_n-Q_n)(f_n^*) & =\frac{1}{n}\sum_{i=1}^n\epsilon_i^2-\sigma^2-\frac{2}{n}\sum_{i=1}^n\epsilon_i(f_n^*(\mbf{x}_i)-f_0(\mbf{x}_i))\\
(\mbb{Q}_n-Q_n)(f_n^*) & =\frac{1}{n}\sum_{i=1}^n\epsilon_i^2-\sigma^2-\frac{2}{n}\sum_{i=1}^n\epsilon_i(f(\mbf{x}_i)-f_0(\mbf{x}_i)),
\end{align*}
we have
$$
(\mbb{Q}_n-Q_n)(f_n^*)-(\mbb{Q}_n-Q_n)(f)=\frac{2}{n}\sum_{i=1}^n\epsilon_i(f(\mbf{x}_i)-f_n^*(\mbf{x}_i)).
$$
Hence, by using the similar arguments as in the proof of Lemma \ref{Lm: NNSieveULLN-2} and applying Corollary 2.2.8 in van der Vaart and Wellner (1996)\cite{van1996weak}, we have
\begin{align*}
 & \mbb{E}^*\left[\sup_{\frac{\delta}{2}<\|f-f_n^*\|_n\leq\delta, f\in\mcal{F}_{r_n}}\sqrt{n}\left[(\mbb{Q}_n-Q_n)(f_n^*)-(\mbb{Q}_n-Q_n)(f)\right]^+\right]\\
 \leq & \mbb{E}^*\left[\sup_{\frac{\delta}{2}<\|f-f_n^*\|_n\leq\delta, f\in\mcal{F}_{r_n}}\left|\frac{1}{\sqrt{n}}\sum_{i=1}^n\epsilon_i(f(\mbf{x}_i)-f_n^*(\mbf{x}_i))\right|\right]\\ 
 \leq & 2\mbb{E}_\epsilon\mbb{E}_\xi\left[\sup_{\frac{\delta}{2}<\|f-f_n^*\|_n\leq\delta,f\in\mcal{F}_{r_n}}\left|\frac{1}{\sqrt{n}}\sum_{i=1}^n\xi_i\epsilon_i\left(f(\mbf{x}_i)-f_n^*(\mbf{x}_i)\right)\right|\right]\\
 \lesssim & \int_0^\delta\sqrt{\log N(\eta,\mcal{F}_{r_n},\|\cdot\|_\infty)}\mrm{d}\eta,
\end{align*}
where the last inequality follows since $f_n^*\in\mcal{F}_{r_n}$ for a large enough $n$.
\end{proof}

Now we are ready to apply Theorem 3.4.1 in \citet{van1996weak} to obtain the rate of convergence for neural network sieve estimators.
\begin{thm}\label{Thm: Rate of Convergence}
Based on the above notations, if 
$$
\eta_n=\mcal{O}\left(\min\{\|\pi_{r_n}f_0-f_0\|_n^{2},r_n(d+2)\log(r_nV_n(d+2))/n, r_n(d+2)\log n/n\}\right),
$$
then
$$
\|\hat{f}_n-f_0\|_n=\mcal{O}_p\left(\max\left\{\|\pi_{r_n}f_0-f_0\|_n,\sqrt{\frac{r_n(d+2)\log[r_nV_n(d+2)]}{n}},\sqrt{\frac{r_n(d+2)\log n}{n}}\right\}\right).
$$
\end{thm}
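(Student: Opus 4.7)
The plan is to invoke Theorem 3.4.1 of van der Vaart and Wellner (1996), which converts (i) a quadratic lower bound on the curvature of the criterion at the sieve projection $f_n^*=\pi_{r_n}f_0$ and (ii) a modulus-of-continuity bound on the centered empirical process into a rate of convergence $\|\hat f_n - f_n^*\|_n = \mathcal{O}_p(1/r_n^*)$. Lemma \ref{Lm: Theorem 3.4.1 vdvw1996 Condition 1} already discharges (i) with the curvature $Q_n(f_n^*)-Q_n(f)\lesssim -\delta^2$ on the shell $\delta/2 < \|f-f_n^*\|_n\le \delta$, valid for $\delta>8\|f_n^*-f_0\|_n$, and Lemma \ref{Lm: Theorem 3.4.1 vdvw1996 Condition 2} reduces (ii) to the entropy integral
$$\phi_n(\delta) \;:=\; \int_0^\delta \sqrt{\log N(\eta,\mathcal{F}_{r_n},\|\cdot\|_\infty)}\,d\eta,$$
up to a universal multiplicative constant.

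The first substantive step is to bound $\phi_n(\delta)$ using the covering number estimate from Theorem 14.5 of \citet{anthony2009neural} already employed in Section 3, namely
$$\log N(\eta,\mathcal{F}_{r_n},\|\cdot\|_\infty) \;\lesssim\; [r_n(d+2)+1]\Bigl(\log\tfrac{[r_n(d+2)+1]V_n^2}{V_n-4} \;+\; \log\tfrac{1}{\eta}\Bigr).$$
Taking square roots, splitting the integrand, and integrating on $(0,\delta]$ gives
$$\phi_n(\delta) \;\lesssim\; \delta\sqrt{r_n(d+2)}\,\Bigl(\sqrt{\log[r_nV_n(d+2)]}+\sqrt{\log(1/\delta)}\,\Bigr),$$
since $\int_0^\delta\sqrt{\log(1/\eta)}\,d\eta \lesssim \delta\sqrt{\log(1/\delta)}$ for small $\delta$.

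The second step is to determine $r_n^*$ from the defining inequality $(r_n^*)^2\phi_n(1/r_n^*)\le\sqrt{n}$ required by Theorem 3.4.1. Plugging in and solving the two summands of $\phi_n$ separately yields the two candidate rates
$$\frac{1}{r_n^*} \;\asymp\; \sqrt{\tfrac{r_n(d+2)\log[r_nV_n(d+2)]}{n}} \quad\text{and}\quad \frac{1}{r_n^*} \;\asymp\; \sqrt{\tfrac{r_n(d+2)\log(1/(1/r_n^*))}{n}},$$
where the second one is handled by the a priori bound $1/r_n^* \ge n^{-1/2}$, which gives $\log r_n^* \lesssim \log n$ and hence the rate $\sqrt{r_n(d+2)\log n/n}$. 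The hypothesis on $\eta_n$ in the statement exactly matches $(r_n^*)^{-2}$ for the larger of these two rates, so the approximate-minimizer slack \eqref{Eq: SieveEstimator} is absorbed.

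The final step is to combine the stochastic rate $\|\hat f_n - f_n^*\|_n = \mathcal{O}_p(1/r_n^*)$ with the deterministic approximation error via the triangle inequality
$$\|\hat f_n - f_0\|_n \;\le\; \|\hat f_n - f_n^*\|_n + \|f_n^* - f_0\|_n,$$
and take the maximum of the three terms (noting that the restriction $\delta > 8\|f_n^*-f_0\|_n$ in Lemma \ref{Lm: Theorem 3.4.1 vdvw1996 Condition 1} is automatically met by taking the maximum with $\|\pi_{r_n}f_0 - f_0\|_n$). Consistency from Theorem \ref{Thm: NNSieveConsistency} guarantees that $\hat f_n$ eventually lies in the neighborhood of $f_n^*$ where the curvature bound applies. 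The main technical obstacle is the self-referential nature of the entropy integral: the $\log(1/\delta)$ factor inside $\phi_n$ couples the inequality $(r_n^*)^2\phi_n(1/r_n^*)\le\sqrt{n}$ to $r_n^*$ itself, and one must argue carefully that this coupling collapses to just an additional $\sqrt{\log n}$ factor rather than an iterated logarithm, which is precisely what produces the third term in the stated rate.
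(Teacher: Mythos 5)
Your proposal follows essentially the same route as the paper: the same covering-number bound from Theorem 14.5 of \citet{anthony2009neural}, the same entropy-integral majorant $\phi_n(\delta)\lesssim\sqrt{r_n(d+2)}\,\delta\sqrt{\log(\tilde{C}_{r_n,d,V_n}/\delta)}$, the same resolution of the self-referential $\log(1/\delta)$ term via $\log\rho_n\lesssim\log n$, and the same application of Theorem 3.4.1 of \citet{van1996weak} followed by the triangle inequality. The only routine step you leave implicit is verifying that $\delta\mapsto\phi_n(\delta)/\delta^\alpha$ is decreasing for some $\alpha<2$, which Theorem 3.4.1 requires and the paper checks explicitly.
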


\begin{proof}
Use the same bound from Theorem 14.5 in \citet{anthony2009neural}, we have
\begin{align*}
\log N(\eta,\mcal{F}_{r_n},\|\cdot\|_n) & \leq\log N(\eta,\mcal{F}_{r_n},\|\cdot\|_\infty)\\
	& \leq\log\left(\frac{4e[r_n(d+2)+1]\left(\frac{1}{4}V_n\right)^2}{\eta\left(\frac{1}{4}V_n-1\right)}\right)^{r_n(d+2)+1}\\
	& =[r_n(d+2)+1]\log\frac{\tilde{C}_{r_n,d,V_n}}{\eta},
\end{align*}
where $\tilde{C}_{r_n,d,V_n}=\frac{e[r_n(d+2)+1]V_n^2}{V_n-4}>e$. Then from Lemma 3.8 in \citet{mendelson2003few}, for $\delta<1$,
\begin{align*}
\int_0^\delta\sqrt{\log N(\eta, \mcal{F}_{r_n}, \|\cdot\|_n)}\mrm{d}\eta & \leq[r_n(d+2)+1]^{1/2}\int_0^\delta\sqrt{\log\frac{\tilde{C}_{r_n,d,V_n}}{\eta}}\mrm{d}\eta\\
	& \lesssim[r_n(d+2)+1]^{1/2}\delta\sqrt{\log\frac{\tilde{C}_{r_n,d,V_n}}{\delta}}\\
	& :=\phi_n(\delta).
\end{align*}
Define $h:\delta\mapsto\phi_n(\delta)/\delta^\alpha=[r_n(d+2)+1]^{1/2}\delta^{1-\alpha}\sqrt{\log\frac{\tilde{C}_{r_n,d,V_n}}{\delta}}$. Since for $0<\delta<1$ and $1<\alpha<2$
\begin{align*}
h'(\delta) & =[r_n(d+2)+1]^{1/2}\left((1-\alpha)\delta^{-\alpha}\sqrt{\log\frac{\tilde{C}_{r_n,d,V_n}}{\delta}}-\frac{1}{2}\frac{\delta^2}{\tilde{C}_{r_n,d,V_n}}\frac{\tilde{C}_{r_n,d,V_n}}{\delta^2}\log^{-1/2}\frac{\tilde{C}_{r_n,d,V_n}}{\delta}\right)\\
	& =[r_n(d+2)+1]^{1/2}\left((1-\alpha)\delta^{-\alpha}\sqrt{\log\frac{\tilde{C}_{r_n,d,V_n}}{\delta}}-\frac{1}{2}\log^{-1/2}\frac{\tilde{C}_{r_n,d,V_n}}{\delta}\right)\\
	& <0,
\end{align*}
$\delta\mapsto\phi_n(\delta)/\delta^\alpha$ is decreasing on $(0,\infty)$. Let $\rho_n\lesssim\|\pi_{r_n}f_0-f_0\|_n^{-1}$. Note that
\begin{align*}
\rho_n^2\phi_n\left(\frac{1}{\rho_n}\right) & =\rho_n[r_n(d+2)+1]^{1/2}\log^{1/2}\left(\rho_n\tilde{C}_{r_n,d,V_n}\right)\\
	& =[r_n(d+2)+1]^{1/2}\rho_n\sqrt{\log\rho_n+\log\tilde{C}_{r_n,d,V_n}}
\end{align*}
and
\begin{align*}
\log\tilde{C}_{r_n,d,V_n} & =1+\log\frac{[r_n(d+2)+1]V_n^2}{V_n-4}\lesssim\log\frac{[r_n(d+2)+1]V_n^2}{V_n-4}\\
	& \sim\log[r_nV_n(d+2)],
\end{align*}
we have
\begin{align*}
\rho_n^2\phi_n\left(\frac{1}{\rho_n}\right)\lesssim\sqrt{n} & \Leftrightarrow r_n(d+2)\rho_n^2\left(\log\rho_n+\log[r_nV_n(d+2)]\right)\lesssim n.
\end{align*}
Therefore, for
$$
\rho_n\lesssim\min\left\{\left(\frac{n}{r_n(d+2)\log[r_nV_n(d+2)]}\right)^{1/2},\left(\frac{n}{r_n(d+2)\log n}\right)^{1/2}\right\},
$$
we have $\rho_n^2\phi_n\left(\frac{1}{\rho_n}\right)\lesssim\sqrt{n}$. Based on these observation, Lemma \ref{Lm: Theorem 3.4.1 vdvw1996 Condition 1}, Lemma \ref{Lm: Theorem 3.4.1 vdvw1996 Condition 2} and Theorem 3.4.1 in van der Vaart and Wellner (1996)\cite{van1996weak} imply that
$$
\|\hat{f}_n-\pi_{r_n}f_0\|_n=\mcal{O}_p\left(\max\left\{\|\pi_{r_n}f_0-f_0\|_n,\sqrt{\frac{r_n(d+2)\log[r_nV_n(d+2)]}{n}},\sqrt{\frac{r_n(d+2)\log n}{n}}\right\}\right).
$$
By using the triangle inequality, we can further get
\begin{align*}
\|\hat{f}_n-f_0\|_n & \leq\|\hat{f}_n-\pi_{r_n}f_0\|_n+\|\pi_{r_n}f_0-f_0\|_n\\
	& =\mcal{O}_p\left(\max\left\{\|\pi_{r_n}f_0-f_0\|_n,\sqrt{\frac{r_n(d+2)\log[r_nV_n(d+2)]}{n}},\sqrt{\frac{r_n(d+2)\log n}{n}}\right\}\right).
\end{align*}
\end{proof}

\begin{remark}
Recall that a sufficient condition to ensure consistency is $r_n(d+2)V_n^2\log[r_nV_n(d+2)]=o(n)$. Under such a condition, $r_n(d+2)\log[r_nV_n(d+2)]\leq n$, the rate of convergence can be simplified to
$$
\|\hat{f}_n-f_0\|_n=\mcal{O}_p\left(\max\left\{\|\pi_{r_n}f_0-f_0\|_n,\sqrt{\frac{r_n(d+2)\log n}{n}}\right\}\right).
$$
\end{remark}

If we assume $f_0\in\mcal{F}$ where $\mcal{F}$ is the space of functions with finite first absolute moments of the Fourier magnitude distributions, i.e.,
\begin{align*}
\mcal{F}= & \left\{f:\mbb{R}^d\to\mbb{R}:f(\mbf{x})=\int\exp\left\{i\mbf{a}^T\mbf{x}\right\}\mrm{d}\mu_f(\mbf{a}),\right.\\
	& \hspace{1.5cm}\left.\|\mu_f\|_1:=\int\max(\|\mbf{a}\|_1, 1)\mrm{d}|\mu_f|(\mbf{a})\leq C\right\},\numberthis\label{Eq: Finite first moment abosolute moments of Fourier...}
\end{align*}
where $\mu_f$ is a complex measure on $\mbb{R}^d$. $|\mu_f|$ denotes the total variation of $\mu_f$, i.e., $|\mu|(A)=\sup\sum_{n=1}^\infty|\mu(A_n)|$ and the supremum is taken over all measurable partitions $\{A_n\}_{n=1}^\infty$ of $A$. $\|\mbf{a}\|_1=\sum_{i=1}^d|a_i|$ for $\mbf{a}=[a_1,\ldots,a_d]^T\in\mbb{R}^d$. Theorem 3 in \citet{makovoz1996random} shows that $\delta_n:=\|f_0-\pi_{r_n}f_0\|_n\lesssim r_n^{-1/2-1/(2d)}$. Therefore, if we let $d$ fixed and $\rho_n=\delta_n^{-1}$ and $V_n\equiv V$ in the proof of Theorem \ref{Thm: Rate of Convergence}, $\delta_n$ must also satisfy the following inequality:
\begin{align*}
 & \rho_n^2\phi\left(\frac{1}{\rho_n}\right)\lesssim\rho_nr_n^{1/2}\log^{1/2}\left(\rho_n\tilde{C}_{r_n,d,V_n}\right)\lesssim\sqrt{n}\\
\Rightarrow\quad & \rho_n^2r_n\log\rho_n+\rho_n^2r_n\log r_n\lesssim n\\
\Rightarrow\quad & \delta_n^{-2}\left(-r_n\log\delta_n+r_n\log r_n\right)\lesssim n\\
\Rightarrow\quad & r_n^{1+\frac{1}{d}}r_n\log r_n\lesssim n.
\end{align*}
One possible choice of $r_n$ to satisfy such condition is $r_n\asymp(n/\log n)^{\frac{d}{2+d}}$. In such a case, we obtain
$$
\|\hat{f}_n-f_0\|_n=\mcal{O}_p\left(\left(\frac{n}{\log n}\right)^{-\frac{1+1/d}{4(1+1/(2d))}}\right),
$$ 
which is the same rate obtained in \citet{chen1998sieve}. It is interesting to note that in the case where $d=1$, we have $\|\hat{f}_n-f_0\|_n=\mcal{O}_p\left((n/\log n)^{-1/3}\right)$. Such rate is close to the $\mcal{O}_p(n^{-1/3})$, which is the convergence rate in non-parametric least square problems when the class of functions considered has bounded variation in $\mbb{R}$ (see Example 9.3.3 in \citet{van2000empirical}). As shown in Proposition \ref{Prop: BoundedVariation} in the Appendix, $\mcal{F}_{r_n}$ is a class of functions with bounded variation in $\mbb{R}$. Therfore, the convergence rate we obtained makes sense.

\section{Asymptotic Normality}\label{Sec: Asymptotic Normality}
To establish the asymptotic normality of sieve estimator for neural network, we follow the idea in \citet{shen1997methods} and start by calculating the G\^ateaux derivative of the empirical criterion function $\mbb{Q}_n(f)=n^{-1}\sum_{i=1}^n(y_i-f(\mbf{x}_i))^2$,
\begin{align*}
\mbb{Q}_{n,f_0}'[f-f_0] & =\lim_{t\to0}\frac{1}{t}\left[\frac{1}{n}\sum_{i=1}^n(y_i-f_0(\mbf{x}_i)-t(f(\mbf{x}_i)-f_0(\mbf{x}_i)))^2-\frac{1}{n}\sum_{i=1}^n(y_i-f_0(\mbf{x}_i))^2\right]\\
	& =\lim_{t\to0}\frac{1}{n}\sum_{i=1}^n\frac{1}{t}\left[(y_i-f_0(\mbf{x}_i))^2-2t(y_i-f_0(\mbf{x}_i))(f(\mbf{x}_i)-f_0(\mbf{x}_i))\right.\\
	& \hspace{5cm}\left.+t^2(f(\mbf{x}_i)-f_0(\mbf{x}_i))^2-(y_i-f_0(\mbf{x}_i))^2\right]\\
	& =-\frac{2}{n}\sum_{i=1}^n\epsilon_i(f(\mbf{x}_i)-f_0(\mbf{x}_i)).
\end{align*}
Then the remainder of first-order functional Taylor series expansion is
\begin{align*}
R_n[f-f_0] & =\mbb{Q}_n(f)-\mbb{Q}_n(f_0)-\mbb{Q}_{n,f_0}'[f-f_0]\\
	& =\frac{1}{n}\sum_{i=1}^n(y_i-f(\mbf{x}_i))^2-\frac{1}{n}\sum_{i=1}^n(y_i-f(\mbf{x}_i))^2+\frac{2}{n}\sum_{i=1}^n\epsilon_i(f(\mbf{x}_i)-f_0(\mbf{x}_i))\\
	& =\frac{1}{n}\sum_{i=1}^n(\epsilon_i+f_0(\mbf{x}_i)-f(\mbf{x}_i))^2-\frac{1}{n}\sum_{i=1}^n\epsilon_i^2+\frac{2}{n}\sum_{i=1}^n\epsilon_i(f(\mbf{x}_i)-f_0(\mbf{x}_i))\\
	& =\frac{1}{n}\sum_{i=1}^n(f(\mbf{x}_i)-f_0(\mbf{x}_i))^2\\
	& =\|f-f_0\|_n^2.
\end{align*}

As will be seen in the proof of asymptotic normality, the rate of convergence for the empirical process $\{n^{-1/2}\sum_{i=1}^n\epsilon_i(f(\mbf{x}_i)-f_0(\mbf{x}_i)):f\in\mcal{F}_{r_n}\}$ plays an important role. Here we establish a lemma, which will be used to find the desired rate of convergence.

\begin{lemma}\label{Lm: LargeDevEmpProcess}
Let $X_1,\ldots,X_n$ be independent random variables with $X_i\sim P_i$. Define the empirical process $\{\nu_n(f)\}$ as
$$
\nu_n(f)=\frac{1}{\sqrt{n}}\sum_{i=1}^n[f(X_i)-P_if].
$$
Let $\mcal{F}_n=\{f:\|f\|_\infty\leq V_n\}$, $\epsilon>0$ and $\alpha\geq\sup_{f\in\mcal{F}_n}n^{-1}\sum_{i=1}^n\mrm{Var}[f(X_i)]$ be arbitrary. Define $t_0$ by $H(t_0,\mcal{F}_n,\|\cdot\|_\infty)=\frac{\epsilon}{4}\psi(M,n,\alpha)$, where $\psi(M,n,\alpha)=M^2/\left[2\alpha\left(1+\frac{MV_n}{2\sqrt{n}\alpha}\right)\right]$. If
\begin{equation}\label{Eq: entropyBound}
H(u,\mcal{F}_n,\|\cdot\|_\infty)\leq A_nu^{-r},
\end{equation}
for some $0<r<2$ and $u\in(0,a]$, where $a$ is a small positive number, and there exists a positive constant $K_i=K_i(r,\epsilon)$, $i=1,2$ such that
$$
M\geq K_1A_n^{\frac{2}{r+2}}V_n^{\frac{2-r}{r+2}}n^{\frac{r-2}{2(r+2)}}\vee K_2A_n^{1/2}\alpha^{\frac{2-r}{4}},
$$
we have
$$
\mbb{P}^*\left(\sup_{f\in\mcal{F}_n}|\nu_n(f)|>M\right)\leq5\exp\left\{-(1-\epsilon)\psi(M,n,\alpha)\right\}.
$$
\end{lemma}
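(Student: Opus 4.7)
The strategy is to perform a chaining argument on $(\mcal{F}_n,\|\cdot\|_\infty)$ combined with Bernstein's inequality for independent bounded random variables, in the spirit of the classical large-deviation empirical process literature. The exponent $\psi(M,n,\alpha)=M^2/[2\alpha(1+MV_n/(2\sqrt n\alpha))]$ is precisely the Bernstein exponent for a single centered bounded variable of variance proxy $\alpha$ and sup-norm bound $V_n$, so the whole task is to show that supremizing over $\mcal{F}_n$ costs only a factor of $5$ out front and a factor of $(1-\epsilon)$ in the exponent, once $M$ dominates the relevant chaining entropy.

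First I would construct a dyadic chain of covers. Set $t_j=2^{-j}V_n$, let $J$ be the largest integer with $t_J>t_0$, and for $j=0,1,\ldots,J$ fix a minimal $t_j$-cover $\mcal{N}_j\subset\mcal{F}_n$ of cardinality $N_j\leq\exp\{A_n t_j^{-r}\}$ (valid for $t_j\leq a$; for larger $t_j$ use the trivial cover by $\{0\}$). For each $f\in\mcal{F}_n$ pick $\pi_j f\in\mcal{N}_j$ with $\|f-\pi_j f\|_\infty\leq t_j$, and take $\pi_0 f=0$, which is admissible since the envelope of $\mcal{F}_n$ is $V_n=t_0$. Telescope
$$\nu_n(f)=\sum_{j=1}^J\nu_n(\pi_j f-\pi_{j-1}f)+\nu_n(f-\pi_J f),$$
and control the remainder by $\|f-\pi_J f\|_\infty\leq t_J\leq t_0$; the defining equation $H(t_0,\mcal{F}_n,\|\cdot\|_\infty)=(\epsilon/4)\psi(M,n,\alpha)$ is exactly what lets one absorb this remainder either into one extra Bernstein step at scale $t_0$ or via the deterministic bound $|\nu_n(f-\pi_J f)|\leq 2\sqrt n\, t_0$.

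Apply Bernstein's inequality to each increment $g_j=\pi_j f-\pi_{j-1}f$: since $\|g_j\|_\infty\leq 3t_{j-1}$ and $n^{-1}\sum_i\mrm{Var}(g_j(X_i))\leq (9 t_{j-1}^2)\wedge\alpha$, Bernstein gives at each level a tail bound of the form $2\exp\{-\eta_j^2/[2v_j(1+\eta_j V_{n,j}/(\sqrt n v_j))]\}$ with $v_j=(9 t_{j-1}^2)\wedge\alpha$ and $V_{n,j}=3t_{j-1}$. The number of possible increments at level $j$ is at most $N_j N_{j-1}\leq\exp\{2 A_n t_j^{-r}\}$, so a union bound over levels with positive thresholds $\eta_j$ satisfying $\sum_j\eta_j\leq M$ produces a sum of exponentials of the required form.

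The main obstacle is the calibration of $\{\eta_j\}$ and the verification of the two hypotheses on $M$. A standard choice is $\eta_j\asymp 2^{-j\gamma}M$ for some $\gamma>0$ arranged so that $\sum_j\eta_j\leq(1-\epsilon/4)M$; one then must check that each Bernstein exponent dominates $2A_n t_j^{-r}$ plus the $j$-th share of $(1-\epsilon)\psi(M,n,\alpha)$. Bernstein naturally splits into a sub-Gaussian regime (term $\eta_j^2/(2v_j)$) and a sub-exponential regime (term $\eta_j\sqrt n/(2V_{n,j})$); requiring the entropy $A_n t_j^{-r}$ to be absorbed in each regime produces exactly the two lower bounds on $M$: the threshold $K_1 A_n^{2/(r+2)}V_n^{(2-r)/(r+2)}n^{(r-2)/(2(r+2))}$ arises from the sub-exponential balance, while $K_2 A_n^{1/2}\alpha^{(2-r)/4}$ arises from the sub-Gaussian balance, and $r<2$ is needed so that the geometric series $\sum_j\eta_j$ and the entropy sums both converge. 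The delicate point is that the leading exponent must retain the global variance $\alpha$, not a local $t_j^2$: this is achieved by treating the coarsest level $j=1$ with $v_1=\alpha$, so that the top Bernstein step produces the full $\psi(M,n,\alpha)$, while the finer levels contribute only $O(\epsilon\,\psi(M,n,\alpha))$ thanks to the conditions on $M$. The constant $5$ in the final bound collects the two-sided Bernstein factor $2$, a union-bound factor over chaining levels, and the contribution of the remainder step at scale $t_0$.
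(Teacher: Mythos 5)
Your overall strategy (dyadic chaining plus Bernstein's inequality) is a from-scratch reproof of the inequality that the paper simply \emph{cites}: the paper's argument contains no chaining at all. It computes $I(s,t)=\int_s^t H^{1/2}(u,\mcal{F}_n,\|\cdot\|_\infty)\,\mrm{d}u\leq 2(2-r)^{-1}A_n^{1/2}t^{1-r/2}$, bounds $t_0\leq(4A_n/(\epsilon\psi))^{1/r}$ from the defining equation, splits into the regimes $\psi\geq 3\sqrt{n}M/(4V_n)$ (when $M\geq 3\sqrt{n}\alpha/V_n$) and $\psi\geq M^2/(4\alpha)$ (otherwise), shows that the two stated lower bounds on $M$ are exactly equivalent to $2^8\epsilon^{-3/2}I\bigl(\epsilon M/(64\sqrt{n}),t_0\bigr)<M$, and then invokes Theorem 2.1 of Alexander (1984), which delivers the constant $5$ and the $(1-\epsilon)$ exponent. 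So you are attempting something strictly harder than what the paper does, and the hardest parts are the ones your sketch leaves as assertions.

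Within your sketch there is also a concrete structural problem: you orient the chain with $t_0$ as the \emph{finest} scale (descending from the trivial cover at scale $V_n$ down to $t_J>t_0$, with the remainder bounded by $2\sqrt{n}\,t_0$), but the definition $H(t_0,\mcal{F}_n,\|\cdot\|_\infty)=\frac{\epsilon}{4}\psi(M,n,\alpha)$ is designed to make $t_0$ the \emph{coarsest} net. The correct architecture starts from a $t_0$-net of cardinality $e^{(\epsilon/4)\psi}$, applies the full Bernstein bound (variance proxy $\alpha$, sup bound $V_n$) to each of its points to get the leading term $2e^{-(1-\epsilon/4)\psi}$, chains downward from $t_0$ to the scale $\epsilon M/(64\sqrt{n})$, and only there uses the deterministic bound $|\nu_n(g)|\leq 2\sqrt{n}\|g\|_\infty\leq\epsilon M/32$. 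With your orientation the remainder $2\sqrt{n}\,t_0$ is not negligible relative to $M$ under the stated hypotheses (from $A_nt_0^{-r}\geq\frac{\epsilon}{4}\psi$ one only gets an upper bound on $t_0$ that does not force $\sqrt{n}\,t_0\lesssim\epsilon M$), and your coarsest-level union bound costs $e^{A_n(V_n/2)^{-r}}$ rather than $e^{(\epsilon/4)\psi}$, so the $(1-\epsilon)$ factor in the exponent is not recovered. The claim that the calibration of the $\eta_j$ "produces exactly the two lower bounds on $M$" and that the prefactor "collects" to $5$ is precisely the content of Alexander's theorem and would need to be carried out in full; as written, the proposal does not close these steps.
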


\begin{proof}
The proof of the lemma is similar to the proof of Corollary 2.2 in \citet{alexander1984probability} and the proof of Lemma 1 in \citet{shen1994convergence}. Since $H(u,\mcal{F}_n,\|\cdot\|_\infty)\leq A_nu^{-r}$ for some $0<r<2$, we have
$$
I(s,t):=\int_s^tH^{1/2}(u,\mcal{F}_n,\|\cdot\|_\infty)\mrm{d}u\leq2(2-r)^{-1}A_n^{\frac{1}{2}}t^{1-\frac{r}{2}}.
$$
Based on the assumption of 
$$
A_nt_0^{-r}\geq H(t_0,\mcal{F}_n,\|\cdot\|_\infty)=\frac{\epsilon}{4}\psi(M,n,\alpha), 
$$
we have $t_0\leq\left[\frac{4A_n}{\epsilon\psi}\right]^{1/r}$.
Note that $\psi(M,n,\alpha)\geq M^2/(4\alpha)$ if $M\leq3\sqrt{n}\alpha/V_n$ and $2(\sqrt{n}\alpha+MV_n/3)\leq4MV_n/3$ if $M\geq3\sqrt{n}\alpha/V_n$ and hence $\psi(M,n,\alpha)\geq3\sqrt{n}M/(4V_n)$. In summary,
$$
\psi(M,n,\alpha)\geq\left\{\begin{array}{ll}
M^2/(4\alpha) & \mrm{if }M<3\sqrt{n}\alpha/V_n,\\
3\sqrt{n}M/(4V_n)  & \mrm{if }M\geq3\sqrt{n}\alpha/V_n
\end{array}
\right..
$$
Therefore, if $M\geq3\sqrt{n}\alpha/V_n$,
\begin{align*}
2^8\epsilon^{-3/2}I\left(\frac{\epsilon M}{64\sqrt{n}}, t_0\right) & \leq 2^9\epsilon^{-3/2}(2-r)^{-1}A_n^{1/2}t_0^{1-\frac{r}{2}}\\
	& \leq 2^9\epsilon^{-3/2}(2-r)^{-1}\left(\frac{4}{\epsilon}\right)^{\frac{1}{r}-\frac{1}{2}}A_n^{1/r}\left(\frac{3}{4V_n}\sqrt{n}M\right)^{\frac{1}{2}-\frac{1}{r}}\\
	& =\tilde{K}_1A_n^{1/r}V_n^{\frac{1}{r}-\frac{1}{2}}n^{\frac{1}{4}-\frac{1}{2r}}M^{\frac{1}{2}-\frac{1}{r}},
\end{align*}
where $\tilde{K}_1=2^9\epsilon^{-3/2}(2-r)^{-1}\left(\frac{4}{\epsilon}\right)^{\frac{1}{r}-\frac{1}{2}}\left(\frac{3}{4}\right)^{\frac{1}{2}-\frac{1}{r}}$. Hence
\begin{align*}
2^8\epsilon^{-3/2}I\left(\frac{\epsilon M}{64\sqrt{n}}, t_0\right)<M & \Leftrightarrow \tilde{K}_1A_n^{1/r}V_n^{\frac{1}{r}-\frac{1}{2}}n^{\frac{1}{4}-\frac{1}{2r}}M^{\frac{1}{2}-\frac{1}{r}}<M\\
	& \Leftrightarrow\tilde{K}_1A_n^{1/r}V_n^{\frac{1}{r}-\frac{1}{2}}n^{\frac{r-2}{4r}}<M^{\frac{1}{r}+\frac{1}{2}}\\
	& \Leftrightarrow M>K_1A_n^{\frac{2}{r+2}}V_n^{\frac{2-r}{r+2}}n^{\frac{r-2}{2(r+2)}},
\end{align*}
where $K_1=\tilde{K}_1^{\frac{2r}{r+2}}$. On the other hand, if $M<3\sqrt{n}\alpha/V_n$,
\begin{align*}
2^8\epsilon^{-3/2}I\left(\frac{\epsilon M}{64\sqrt{n}}, t_0\right) & \leq 2^9\epsilon^{-3/2}(2-r)^{-1}A_n^{1/2}t_0^{1-\frac{r}{2}}\\
	& \leq 2^9\epsilon^{-3/2}(2-r)^{-1}\left(\frac{4}{\epsilon}\right)^{\frac{1}{r}-\frac{1}{2}}A_n^{1/r}\left(\frac{M^2}{4\alpha}\right)^{\frac{1}{2}-\frac{1}{r}}\\
	& =\tilde{K}_2A_n^{1/r}M^{1-\frac{2}{r}}\alpha^{\frac{1}{r}-\frac{1}{2}},
\end{align*}
where $\tilde{K}_2=2^9\epsilon^{-3/2}(2-r)^{-1}\left(\frac{4}{\epsilon}\right)^{\frac{1}{r}-\frac{1}{2}}\left(\frac{1}{4}\right)^{\frac{1}{2}-\frac{1}{r}}$. Hence
\begin{align*}
2^8\epsilon^{-3/2}I\left(\frac{\epsilon M}{64\sqrt{n}}, t_0\right)<M & \Leftrightarrow\tilde{K}_2A_n^{1/r}M^{1-\frac{2}{r}}\alpha^{\frac{1}{r}-\frac{1}{2}}<M\\
	& \Leftrightarrow\tilde{K}_2A_n^{1/r}\alpha^{\frac{2-r}{2r}}<M^{\frac{2}{r}}\\
	& \Leftrightarrow M>K_2A_n^{1/2}\alpha^{\frac{2-r}{4}},
\end{align*}
where $K_2=\tilde{K}_2^{r/2}$. In conclusion, if $M\geq K_1A_n^{\frac{2}{r+2}}V_n^{\frac{2-r}{r+2}}n^{\frac{r-2}{2(r+2)}}\vee K_2A_n^{1/2}\alpha^{\frac{2-r}{4}}$, then $2^8\epsilon^{-3/2}I\left(\frac{\epsilon M}{64\sqrt{n}}, t_0\right)<M$. By Theorem 2.1 in  \citet{alexander1984probability}, we have the desired result.
\end{proof}

As a Corollary to Lemma \ref{Lm: LargeDevEmpProcess}, we can show that the supremum of the empirical process $\{n^{-1/2}\sum_{i=1}^n\epsilon_i(f(\mbf{x}_i)-f_0(\mbf{x}_i)):f\in\mcal{F}_{r_n}\}$ converges to 0 in probability.

\begin{corollary}\label{Cor: LargeDevEmpProc}
Let $\rho_n$ satisfy $\rho_n\|\hat{f}_n-f_0\|_n=\mcal{O}_p(1)$ and $\mcal{F}_{r_n}$ be the class of neural network sieves as defined in (\ref{Eq: NNSieve}). Suppose that $\mbb{E}[|\epsilon|^{2+\lambda}]<\infty$ for some $\lambda>0$. Then under the conditions
\begin{itemize}
\item[(C1)] $r_n(d+2)V_n\log[r_nV_n(d+2)]=o(n^{1/4})$;
\item[(C2)] $n\rho_n^{-2}/V_n^\lambda=o(1)$,
\end{itemize}
we have
$$
\sup_{\|f-f_0\|_n\leq\rho_n^{-1},f\in\mcal{F}_{r_n}}\left|\frac{1}{\sqrt{n}}\sum_{i=1}^n\epsilon_i(f-f_0)(\mbf{x}_i)\right|=o_p(1).
$$
\end{corollary}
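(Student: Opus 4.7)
My plan is to truncate the errors at a level $T_n\to\infty$, apply Lemma~\ref{Lm: LargeDevEmpProcess} to the resulting bounded-variable part, and control the discarded tail using the $(2+\lambda)$-th moment of $\epsilon$. Setting $\epsilon_i^T=\epsilon_i\mbf{1}_{\{|\epsilon_i|\leq T_n\}}$ and $\epsilon_i^R=\epsilon_i-\epsilon_i^T$, and using $\mbb{E}\epsilon_i^T=-\mbb{E}\epsilon_i^R$ (since $\mbb{E}\epsilon_i=0$), I split
$$
\frac{1}{\sqrt n}\sum_{i=1}^n\epsilon_i(f-f_0)(\mbf{x}_i)=\nu_n^T(f)+R_n^c(f)+R_n^t(f),
$$
where $\nu_n^T(f)=\frac{1}{\sqrt n}\sum_i(\epsilon_i^T-\mbb{E}\epsilon_i^T)(f-f_0)(\mbf{x}_i)$, $R_n^c(f)=-\frac{1}{\sqrt n}\sum_i\mbb{E}\epsilon_i^R\cdot(f-f_0)(\mbf{x}_i)$, and $R_n^t(f)=\frac{1}{\sqrt n}\sum_i\epsilon_i^R(f-f_0)(\mbf{x}_i)$.

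The two remainders are elementary. A union bound with Markov gives $\mbb{P}(\sup_f|R_n^t(f)|>0)\leq\mbb{P}(\max_i|\epsilon_i|>T_n)\leq n\mbb{E}|\epsilon|^{2+\lambda}/T_n^{2+\lambda}$, while $|\mbb{E}\epsilon_i^R|\leq\mbb{E}|\epsilon|^{2+\lambda}/T_n^{1+\lambda}$ combined with the Cauchy--Schwarz inequality and the constraint $\|f-f_0\|_n\leq\rho_n^{-1}$ yields $\sup_f|R_n^c(f)|\lesssim\sqrt n/(T_n^{1+\lambda}\rho_n)$. For the stochastic principal term, I regard $\nu_n^T$ as an empirical process over $\mcal{G}_n=\{(\mbf{x},u)\mapsto u(f(\mbf{x})-f_0(\mbf{x})):f\in\mcal{F}_{r_n},\ \|f-f_0\|_n\leq\rho_n^{-1}\}$, evaluated at the independent observations $(\mbf{x}_i,\epsilon_i^T)$. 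Each $g\in\mcal{G}_n$ satisfies $\|g\|_\infty\leq 2T_nV_n$, and the variance sum obeys $\frac{1}{n}\sum_i\mrm{Var}\bigl((\epsilon_i^T-\mbb{E}\epsilon_i^T)(f-f_0)(\mbf{x}_i)\bigr)\leq\sigma^2\rho_n^{-2}=:\alpha_n$. Because $\|g_f-g_{f'}\|_\infty\leq T_n\|f-f'\|_\infty$, the entropy estimate of Theorem 14.5 in \citet{anthony2009neural} transfers to $H(u,\mcal{G}_n,\|\cdot\|_\infty)\leq[r_n(d+2)+1]\log(CT_nV_nr_n/u)$, which for any $r\in(0,2)$ is majorised by $A_nu^{-r}$ with $A_n\asymp r_n(d+2)\log[T_nV_nr_n(d+2)]$. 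Lemma~\ref{Lm: LargeDevEmpProcess} then provides an exponential tail bound on $\sup_f|\nu_n^T(f)|$ once its quantitative hypothesis is checked.

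The remaining task, and the main obstacle, is to choose $T_n$ so that every piece is $o(1)$. The tail forces $T_n\gg n^{1/(2+\lambda)}$; given this, the centering bound $\sqrt n/(T_n^{1+\lambda}\rho_n)$ is $o(1)$ as $\rho_n\to\infty$ (with (C2) securing the boundary cases). The delicate step is verifying
$$
M_n\geq K_1A_n^{2/(r+2)}(T_nV_n)^{(2-r)/(r+2)}n^{(r-2)/(2(r+2))}\vee K_2A_n^{1/2}\alpha_n^{(2-r)/4}
$$
for some $M_n=o(1)$. Taking $r$ arbitrarily close to zero, the first term behaves like $A_nT_nV_n/\sqrt n\asymp T_n\cdot r_n(d+2)V_n\log[r_nV_n(d+2)]/\sqrt n$, which is $o(1)$ by (C1) once $T_n$ is taken as a small power of $n$ just above $n^{1/(2+\lambda)}$, while the second is of order $A_n^{1/2}\alpha_n^{1/2}\asymp\sqrt{r_n(d+2)\log(\cdots)}/\rho_n$, rendered $o(1)$ by (C2). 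Assembling the three pieces yields $\sup_f|\nu_n^T(f)|=o_p(1)$, and hence the stated conclusion.
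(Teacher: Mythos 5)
Your overall architecture --- truncate the errors, apply Lemma~\ref{Lm: LargeDevEmpProcess} to the bounded part, and use the $(2+\lambda)$-th moment to kill the tail --- is the same as the paper's, but your treatment of the tail creates a genuine gap. You dispose of $R_n^t$ via the union bound $\mbb{P}(\max_i|\epsilon_i|>T_n)\leq n\,\mbb{E}|\epsilon|^{2+\lambda}/T_n^{2+\lambda}$, which forces $T_n\gg n^{1/(2+\lambda)}$. On the other side, the quantitative hypothesis of Lemma~\ref{Lm: LargeDevEmpProcess} requires (ignoring logarithms) $A_n^{2/(2-r)}T_nV_n=o(\sqrt n)$; since $A_n\to\infty$, this is least restrictive as $r\downarrow 0$, where it reads $A_nT_nV_n=o(\sqrt n)$, and (C1) (which controls $A_nV_n=o(n^{1/4})$) then caps $T_n$ at $o(n^{1/4})$. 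The window $n^{1/(2+\lambda)}\ll T_n\ll n^{1/4}$ is empty whenever $\lambda<2$, and the corollary assumes only $\mbb{E}|\epsilon|^{2+\lambda}<\infty$ for \emph{some} $\lambda>0$. So your argument proves the statement only under the stronger assumption of more than four finite moments; as written, the "delicate step" you flag does not close. A symptom of the mismatch is that your proof never really needs (C2) in the form $n\rho_n^{-2}/V_n^{\lambda}=o(1)$ --- the exponent $\lambda$ on $V_n$ there is a fingerprint of a different tail argument.

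The fix, which is what the paper does, is to truncate at $T_n=V_n$ and to bound the tail contribution not by forcing it to vanish identically, but by the Cauchy--Schwarz inequality,
\begin{equation*}
\left|\frac{1}{n}\sum_{i=1}^n\epsilon_i\mbb{I}_{\{|\epsilon_i|>V_n\}}(f-f_0)(\mbf{x}_i)\right|\leq\left(\frac{1}{n}\sum_{i=1}^n\epsilon_i^2\mbb{I}_{\{|\epsilon_i|>V_n\}}\right)^{1/2}\|f-f_0\|_n,
\end{equation*}
followed by Markov's inequality and $\mbb{E}[\epsilon^2\mbb{I}_{\{|\epsilon|>V_n\}}]\leq\mbb{E}|\epsilon|^{2+\lambda}/V_n^{\lambda}$; after multiplying by $\sqrt n$ and using $\|f-f_0\|_n\leq\rho_n^{-1}$, this tail is controlled exactly by (C2). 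With the truncation level $V_n$, the envelope is of order $V_n^2$ and the paper verifies the lemma's hypothesis with $r=1$, obtaining a principal term of order $(r_n(d+2)V_n\log[r_nV_n(d+2)]/n^{1/4})^{2/3}=o(1)$ by (C1). Your centering correction $R_n^c$ is a legitimate point of care (the paper is silent about recentering the truncated variables), and with $T_n=V_n$ it is of order $\sqrt n\,\rho_n^{-1}/V_n^{1+\lambda}$, which is again $o(1)$ under (C2); so that part of your decomposition survives the repair.
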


\begin{proof}
To establish the desired result, we apply the truncation device.
\begin{align*}
 & \mbb{P}^*\left(\sup_{\|f-f_0\|_n\leq\rho_n^{-1},f\in\mcal{F}_{r_n}}\left|\frac{1}{\sqrt{n}}\sum_{i=1}^n\epsilon_i(f-f_0)(\mbf{x}_i)\right|\gtrsim M\right)\\
\leq & \mbb{P}^*\left(\sup_{\|f-f_0\|_n\leq\rho_n^{-1},f\in\mcal{F}_{r_n}}\left|\frac{1}{\sqrt{n}}\sum_{i=1}^n\epsilon_i\mbb{I}_{\{|\epsilon_i|\leq V_n\}}(f-f_0)(\mbf{x}_i)\right|\gtrsim M\right)\\
 & \hspace{2cm}+\mbb{P}^*\left(\sup_{\|f-f_0\|_n\leq\rho_n^{-1},f\in\mcal{F}_{r_n}}\left|\frac{1}{\sqrt{n}}\sum_{i=1}^n\epsilon_i\mbb{I}_{\{|\epsilon_i|> V_n\}}(f-f_0)(\mbf{x}_i)\right|\gtrsim M\right)\\
:= & (I)+(II)
\end{align*}
For (I), we can apply Lemma \ref{Lm: LargeDevEmpProcess} directly. Note that $|\epsilon\mbb{I}_{\{|\epsilon|\leq V_n\}}(f-f_0)(\mbf{x})|\leq V_n(V_n+\|f_0\|_\infty)\lesssim V_n^2$ since $\|f_0\|_\infty<\infty$ and for $0<\eta<1$,
\begin{align*}
\log N(\eta,\mcal{F}_{r_n},\|\cdot\|_\infty) & \leq\log\left(\frac{4e[r_n(d+2)+1]\left(\frac{1}{4}V_n\right)^2}{\eta\left(\frac{1}{4}V_n-1\right)}\right)^{r_n(d+2)+1}\\
	& =[r_n(d+2)+1]\left(\log\tilde{C}_{r_n,d,V_n}+\log\frac{1}{\eta}\right)\\
	& \leq[r_n(d+2)+1]\left(\log\tilde{C}_{r_n,d,V_n}+\frac{1}{\eta}-1\right)\\
	& =C_{r_n,d,V_n}\left(1+\frac{1}{\eta}\right)\\
	& \leq2C_{r_n,d,V_n}\frac{1}{\eta},
\end{align*}
where $\tilde{C}_{r_n,d,V_n}=\frac{e[r_n(d+2)+1]V_n^2}{V_n-4}$ and
\begin{align*}
C_{r_n,d,V_n} & =[r_n(d+2)+1]\log\tilde{C}_{r_n,d,V_n}-[r_n(d+2)+1]\\
	& =[r_n(d+2)+1]\log\frac{[r_n(d+2)+1]V_n^2}{V_n-4}\\
	& \sim r_n(d+2)\log[r_nV_n(d+2)].
\end{align*}
Therefore, equation (\ref{Eq: entropyBound}) is satisfied with $r=1$ and $A_n=2C_{r_n,d,V_n}$. Following from Lemma \ref{Lm: LargeDevEmpProcess}, for $M\gtrsim C_{r_n,d,V_n}^{2/3}V_n^{2/3}n^{-1/6}\vee C_{r_n,d,V_n}^{1/2}\alpha^{1/4}$, we have $(I)\leq5\exp\left\{-(1-\epsilon)\psi(M,n,\alpha)\right\}$ and hence
$$
\sup_{\|f-f_0\|\leq\rho_n^{-1},f\in\mcal{F}_{r_n}}\left|\frac{1}{\sqrt{n}}\epsilon_i\mbb{I}_{\{|\epsilon_i|\leq V_n\}}(f-f_0)(\mbf{x}_i)\right|=\mcal{O}_p\left(\frac{C_{r_n,d,V_n}^{2/3}V_n^{2/3}}{n^{1/6}}\right).
$$
Since by using (C1),
$$
\frac{C_{r_n,d,V_n}^{2/3}V_n^{2/3}}{n^{1/6}}\sim\left(\frac{r_n(d+2)V_n\log[r_nV_n(d+2)]}{n^{1/4}}\right)^{2/3}=o_p(1).
$$
For (II), by using the Cauchy-Schwarz inequality, we have
$$
\left|\frac{1}{n}\sum_{i=1}^n\epsilon_i\mbb{I}_{\{|\epsilon_i|>V_n\}}(f-f_0)(\mbf{x}_i)\right|\leq\left(\frac{1}{n}\sum_{i=1}^n\epsilon_i^2\mbb{I}_{\{|\epsilon_i|>V_n\}}\right)^{1/2}\|f-f_0\|_n.
$$
Then it follows from the Markov inequality that
\begin{align*}
(II) & =\mbb{P}^*\left(\sup_{\|f-f_0\|_n\leq\rho_n^{-1},f\in\mcal{F}_{r_n}}\left|\frac{1}{n}\sum_{i=1}^n\epsilon_i\mbb{I}_{\{|\epsilon_i|> V_n\}}(f-f_0)(\mbf{x}_i)\right|\gtrsim Mn^{-1/2}\right)\\
	& \leq\mbb{P}\left(\left(\frac{1}{n}\sum_{i=1}^n\epsilon_i^2\mbb{I}_{\{|\epsilon_i|>V_n\}}\right)^{1/2}\rho_n^{-1}\gtrsim Mn^{-1/2}\right)\\
	& =\mbb{P}\left(\frac{1}{n}\sum_{i=1}^n\epsilon_i^2\mbb{I}_{\{|\epsilon_i|>V_n\}}\gtrsim M^2n^{-1}\rho_n^2\right)\\
	& \lesssim M^{-2}n\rho_n^{-2}\mbb{E}[\epsilon^2\mbb{I}_{|\epsilon|>V_n}]\\
	& \lesssim M^{-2}n\rho_n^{-2}\frac{\mbb{E}[|\epsilon|^{2+\lambda}]}{V_n^\lambda}.
\end{align*}
Based on condition (C2), we have $(II)\to0$, and
$$
\sup_{\|f-f_0\|_n\leq\rho_n^{-1},f\in\mcal{F}_{r_n}}\left|\frac{1}{n}\sum_{i=1}^n\epsilon_i\mbb{I}_{\{|\epsilon_i|> V_n\}}(f-f_0)(\mbf{x}_i)\right|=o_p(1).
$$
Combining the results we obtained above, we get
$$
\sup_{\|f-f_0\|_n\leq\rho_n^{-1},f\in\mcal{F}_{r_n}}\left|\frac{1}{\sqrt{n}}\sum_{i=1}^n\epsilon_i(f-f_0)(\mbf{x}_i)\right|=o_p(1)
$$
\end{proof}

\begin{remark}
Condition (C2) can be further simplified using the results from Theorem \ref{Thm: Rate of Convergence}. If 
$$
\eta_n=\mcal{O}\left(\min\{\|\pi_{r_n}f_0-f_0\|_n^{2},r_n(d+2)\log(r_nV_n(d+2))/n, r_n(d+2)\log n/n\}\right),
$$
then 
$$
\rho_n^{-1}\asymp\max\left\{\|\pi_{r_n}f_0-f_0\|_n,\sqrt{r_n(d+2)\log[r_nV_n(d+2)]/n},\sqrt{r_n(d+2)\log n/n}\right\}.
$$ 
It follows from condition (C1) that 
$$
\rho_n^{-1}\asymp\max\left\{\|\pi_{r_n}f_0-f_0\|_n,\sqrt{r_n(d+2)\log n/n}\right\}.
$$ 
For simplicity, we assume that $\rho_n^{-1}\asymp\sqrt{r_n(d+2)\log n/n}$, which holds for functions having finite first absolute moments of the Fourier magnitude distributions as discussed at the end of section 4.4. Then in this case, 
$$
n\rho_n^{-2}/V_n^\lambda\asymp r_n(d+2)\log n/V_n^\lambda,
$$
so that condition (C2) becomes $r_n(d+2)\log n/V_n^\lambda\to0$.
\end{remark}

Now we are going to establish the asymptotic normality for neural network estimators. For $f\in\{f\in\mcal{F}_{r_n}:\|f-f_0\|_n\leq\rho_n^{-1}\}$, we consider a local alternative 
\begin{equation}\label{Eq: LocalAlt}
\tilde{f}_n(f)=(1-\delta_n)f+\delta_n(f_0+\iota),
\end{equation}
where $0\leq\delta_n=\eta_n^{1/2}=o(n^{-1/2})$ is chosen such that $\rho_n\delta_n=o(1)$ and $\iota(\mbf{x})\equiv1$.

\begin{thm}[Asymptotic Normality]\label{Thm: Asymptotic Normality}
Suppose that $0\leq\eta_n=o(n^{-1})$ and conditions (C1) and (C2) in Corollary \ref{Cor: LargeDevEmpProc} hold. We further assume that the following two conditions hold
\begin{itemize}
\item[(C3)] $\sup_{f\in\mcal{F}_{r_n}:\|f-f_0\|_n\leq\rho_n^{-1}}\|\pi_{r_n}\tilde{f}_n(f)-\tilde{f}_n(f)\|_n=\mcal{O}_p(\rho_n\delta_n^2)$;
\item[(C4)] $\sup_{f\in\mcal{F}_{r_n}:\|f-f_0\|_n\leq\rho_n^{-1}}\frac{1}{n}\sum_{i=1}^n\epsilon_i\left(\pi_{r_n}\tilde{f}_n(f)(\mbf{x}_i)-\tilde{f}_n(f)(\mbf{x}_i)\right)=\mcal{O}_p(\delta_n^2)$,
\end{itemize} 
then
$$
\frac{1}{\sqrt{n}}\sum_{i=1}^n\left[\hat{f}_n(\mbf{x}_i)-f_0(\mbf{x}_i)\right]\xrightarrow{d}\mcal{N}(0,\sigma^2).
$$
\end{thm}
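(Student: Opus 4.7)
I would exploit the approximate-minimizer property of $\hat f_n$ against the two local alternatives $\tilde f_n^{\pm}(\hat f_n) = (1-\delta_n)\hat f_n + \delta_n(f_0 \pm \iota)$. Since both $\hat f_n$ and $\pi_{r_n}\tilde f_n^{\pm}(\hat f_n)$ lie in $\mcal{F}_{r_n}$, the sieve inequality $\mbb{Q}_n(\hat f_n) \leq \mbb{Q}_n(\pi_{r_n}\tilde f_n^{\pm}(\hat f_n)) + \mcal{O}_p(\eta_n)$ is available, and combined with the exact Taylor expansion
$$
\mbb{Q}_n(f) - \mbb{Q}_n(f_0) = -\frac{2}{n}\sum_{i=1}^n \epsilon_i(f-f_0)(\mbf{x}_i) + \|f-f_0\|_n^2
$$
derived at the start of Section \ref{Sec: Asymptotic Normality}, the two choices of sign are intended to sandwich the target and deliver the linear representation
$$
\frac{1}{\sqrt n}\sum_{i=1}^n(\hat f_n - f_0)(\mbf{x}_i) = \frac{1}{\sqrt n}\sum_{i=1}^n \epsilon_i + o_p(1).
$$
The classical CLT applied to the i.i.d.\ mean-zero variance-$\sigma^2$ errors $\epsilon_i$, together with Slutsky's theorem, then produces the announced $\mcal{N}(0,\sigma^2)$ limit.

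\textbf{Algebraic core.} Using the identities $\tilde f_n^{\pm}(\hat f_n) - \hat f_n = \delta_n(f_0 \pm \iota - \hat f_n)$, $\tilde f_n^{\pm}(\hat f_n) - f_0 = (1-\delta_n)(\hat f_n - f_0) \pm \delta_n \iota$, and $\|\iota\|_n = 1$, I would plug both functions into the Taylor expansion, take the difference, and divide by $2\delta_n$. This isolates the target expression $(1-\delta_n)\cdot\tfrac{1}{n}\sum(\hat f_n - f_0)(\mbf{x}_i) \mp \tfrac{1}{n}\sum \epsilon_i$ against four families of remainders: (i) the optimization slack $\mcal{O}_p(\eta_n/\delta_n) = \mcal{O}_p(\delta_n)$, which is $o(n^{-1/2})$ because $\delta_n = \eta_n^{1/2}$ and $\eta_n = o(n^{-1})$; (ii) the projection discrepancies $\delta_n^{-1}\left[\mbb{Q}_n(\pi_{r_n}\tilde f_n^{\pm}(\hat f_n)) - \mbb{Q}_n(\tilde f_n^{\pm}(\hat f_n))\right]$, controlled by (C3), (C4) together with the bound $\|\tilde f_n^{\pm}(\hat f_n) - f_0\|_n = \mcal{O}_p(\rho_n^{-1} + \delta_n)$ and Cauchy--Schwarz on the squared-norm difference; (iii) the stochastic drift $\tfrac{1}{n}\sum \epsilon_i(\hat f_n - f_0)(\mbf{x}_i) = o_p(n^{-1/2})$, directly from Corollary \ref{Cor: LargeDevEmpProc} applied on the shrinking ball $\{\|f-f_0\|_n \leq \rho_n^{-1}\}$; and (iv) the quadratic residual $(1 - \delta_n/2)\|\hat f_n - f_0\|_n^2 + \delta_n/2$, where the squared norm is $\mcal{O}_p(\rho_n^{-2})$ by Theorem \ref{Thm: Rate of Convergence}.

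\textbf{Closing and main obstacle.} After multiplying by $\sqrt n$, the $+$ inequality gives $\sqrt n[(1-\delta_n)\cdot\tfrac{1}{n}\sum(\hat f_n - f_0)(\mbf{x}_i) - \tfrac{1}{n}\sum \epsilon_i] \geq -o_p(1)$ and the $-$ inequality gives the matching $\leq o_p(1)$; combined they yield equality up to $o_p(1)$, and the factor $(1-\delta_n)$ may be replaced by $1$ at the cost of $\delta_n\cdot\mcal{O}_p(1) = o_p(1)$, producing the linearization displayed above. The main obstacle is forcing the quadratic remainder (iv) to vanish: one needs $\sqrt n\|\hat f_n - f_0\|_n^2 = o_p(1)$, i.e.\ $\rho_n^{-2} = o(n^{-1/2})$, which constrains the sieve parameters so that both $r_n(d+2)\log n = o(\sqrt n)$ and the approximation error squared is $o(n^{-1/2})$. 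Under the growth conditions imposed by (C1)--(C2) in the finite-Fourier-moment setting discussed after Theorem \ref{Thm: Rate of Convergence}, this is simultaneously achievable, but the joint bookkeeping among $r_n$, $V_n$, $\delta_n$, and $\rho_n$ is the genuinely delicate step; everything else is algebra combined with the preparatory lemmas.
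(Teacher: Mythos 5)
Your overall route is the same as the paper's: perturb $\hat f_n$ to $\pi_{r_n}\tilde f_n^{\pm}=\pi_{r_n}\bigl[(1-\delta_n)\hat f_n+\delta_n(f_0\pm\iota)\bigr]$, invoke the approximate-minimizer inequality $\mbb{Q}_n(\hat f_n)\leq\mbb{Q}_n(\pi_{r_n}\tilde f_n^{\pm})+\mcal{O}_p(\eta_n)$, expand both sides with the exact quadratic Taylor identity, control the projection terms by (C3)--(C4) and Cauchy--Schwarz, kill the drift term $n^{-1}\sum\epsilon_i(\hat f_n-f_0)(\mbf{x}_i)$ with Corollary \ref{Cor: LargeDevEmpProc}, and let the two signs sandwich $\langle\hat f_n-f_0,\iota\rangle-n^{-1}\sum\epsilon_i$ before applying the CLT. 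All of that matches.

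The one genuine problem is your item (iv) and what you call ``the main obstacle.'' After subtracting the two expansions, the quadratic contribution is $\|\hat f_n-f_0\|_n^2-\|\tilde f_n^{\pm}-f_0\|_n^2$, whose leading piece is $\bigl[1-(1-\delta_n)^2\bigr]\|\hat f_n-f_0\|_n^2=(2\delta_n-\delta_n^2)\|\hat f_n-f_0\|_n^2$. The crucial observation --- which the paper uses and you miss --- is that this term enters the one-sided inequality with the \emph{favorable} sign for both choices of $\pm\iota$: after rearranging, $(1-\delta_n/2)\|\hat f_n-f_0\|_n^2\geq 0$ sits on the side of the inequality where a nonnegative quantity can simply be discarded (in the paper's bookkeeping this is the step $(-2\delta_n+\delta_n^2)\|\hat f_n-f_0\|_n^2\leq\delta_n^2\|\hat f_n-f_0\|_n^2=o_p(\delta_n^2)$, which needs only consistency). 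Consequently no rate of the form $\sqrt{n}\,\|\hat f_n-f_0\|_n^2=o_p(1)$ is required. By instead demanding $\rho_n^{-2}=o(n^{-1/2})$ you are importing a hypothesis that is not among (C1)--(C4) and $\eta_n=o(n^{-1})$: the setup only gives $\rho_n\delta_n=o(1)$, which is a \emph{lower} bound on $\rho_n^{-1}$, and the approximation-error component of $\rho_n^{-1}$ (namely $\|\pi_{r_n}f_0-f_0\|_n$) is not bounded by $n^{-1/4}$ under the stated conditions except in special cases such as the finite-Fourier-moment class you cite. As written, your argument therefore proves the theorem only under an additional, unstated restriction on the sieve; the fix is simply to track the sign of the quadratic term rather than its magnitude. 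The remaining bookkeeping in your proposal (optimization slack $\mcal{O}_p(\eta_n/\delta_n)=\mcal{O}_p(\delta_n)=o(n^{-1/2})$, projection remainders of order $\mcal{O}_p(\delta_n^2)$ before rescaling, drift $o_p(\delta_n n^{-1/2})$) agrees with the paper.
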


Before we proceed to the proof of the theorem, let us focus on the conditions given in the theorem. Note that if (C1) holds, we have 
$$
r_n(d+2)V_n^2\log[r_nV_n(d+2)]\leq[r_n(d+2)]^4V_n^4\left(\log[r_nV_n(d+2)]\right)^4=o(n),
$$
so it is a sufficient condition to ensure the consistency of the neural network sieve estimator. As in Remark \ref{Rmk: Some Simple Case for Consistency}, we consider some simple scenarios here. If $V_n=\mcal{O}(r_n)$, then $r_n(d+2)V_n\log[r_nV_n(d+2)]=\mcal{O}\left(r_n^2\log r_n\right)$ so that a possible growth rate for $r_n$ is $r_n=o\left(n^{1/8}/(\log n)^2\right)$. On the other hand, if $r_n=\log V_n$, then $r_n(d+2)V_n\log[r_nV_n(d+2)]=\mcal{O}\left(V_n(\log V_n)^2\right)$ and a possible growth rate for $V_n$ is $V_n=o(n^{1/4}/(\log n)^2)$. Thus, in both cases, the growth rate required for the asymptotic normality of neural network sieve estimator is slower than the growth rate required for the consistency as given in Remark \ref{Rmk: Some Simple Case for Consistency}. One explanation is that due to the Universal Approximation Theorem, a neural network with one hidden layer can approximate a continuous function on compact support arbitrarily well if the number of hidden units is sufficiently large. Therefore, if the number of hidden units is too large, the neural network sieve estimator $\hat{f}_n$ may be very close to the best projector of the true function $f_0$ in $\mcal{F}_{r_n}$ so that the error $\sum_{i=1}^n\left[\hat{f}_n(\mbf{x}_i)-f_0(\mbf{x}_i)\right]$ could be close to zero, resulting a small variation. By allowing slower growth rate of the number of hidden units can increase the variations of $\sum_{i=1}^n\left[\hat{f}_n(\mbf{x}_i)-f_0(\mbf{x}_i)\right]$, which makes the asymptotic normality more reasonable. On the other hand, condition (C3) and condition (C4) are similar conditions as in \citet{shen1997methods}, which are known for conditions on approximation error. These conditions indicate that the approximation rate of a single layer neural network cannot be too slow, otherwise it may require a huge number of samples to reach the desired approximation error. Therefore, the conditions in the theorem can be considered as a trade-off between bias and variance.

\begin{proof}[Proof of Theorem \ref{Thm: Asymptotic Normality}]
The main idea of the proof is to use the functional Taylor series expansion for $\mbb{Q}_n(f)$ and then carefully bound each term in the expansion. For any $f\in\{f\in\mcal{F}_{r_n}:\|f-f_0\|_n\leq\rho_n^{-1}\}$,
\begin{align*}
\mbb{Q}_n(f) & =\mbb{Q}_n(f_0)+\mbb{Q}_{n,f_0}'[f-f_0]+R_n[f-f_0]\\
	& =\frac{1}{n}\sum_{i=1}^n\epsilon_i^2-\frac{2}{n}\sum_{i=1}^n\epsilon_i(f(\mbf{x}_i)-f_0(\mbf{x}_i))+\frac{1}{n}\sum_{i=1}^n(f(\mbf{x}_i)-f_0(\mbf{x}_i))^2.\numberthis\label{Eq: Functional Taylor Expansion of Qn}
\end{align*}
Note that
\begin{align*}
\|\tilde{f}_n(f)-f_0\|_n & =\|(1-\delta_n)\hat{f}_n+\delta_n(f_0+\iota)-f_0\|_n\\
	& =\|(1-\delta_n)(\hat{f}_n-f_0)+\delta_n\iota\|_n\\
	& \leq(1-\delta_n)\|\hat{f}_n-f_0\|_n+\delta_n,
\end{align*}
and since $\delta_n=o(n^{-1/2})$, we can know that with probability tending to 1, $\|\tilde{f}_n(f)-f_0\|_n\leq\rho_n^{-1}$. Then replacing $f$ in (\ref{Eq: Functional Taylor Expansion of Qn}) by $\hat{f}_n$ and $\pi_{r_n}\tilde{f}_n(f)$ as defined in (\ref{Eq: LocalAlt}), we get
\begin{align*}
\mbb{Q}_n(\hat{f}_n) & =\frac{1}{n}\sum_{i=1}^n\epsilon_i^2-\frac{2}{n}\sum_{i=1}^n\epsilon_i(\hat{f}_n(\mbf{x}_i)-f_0(\mbf{x}_i))+\|\hat{f}_n-f_0\|_n^2\\
\mbb{Q}_n(\pi_{r_n}\tilde{f}_n(f)) & =\frac{1}{n}\sum_{i=1}^n\epsilon_i^2-\frac{2}{n}\sum_{i=1}^n\epsilon_i(\pi_{r_n}\tilde{f}_n(f)(\mbf{x}_i)-f_0(\mbf{x}_i))+\|\pi_{r_n}\tilde{f}_n(f)-f_0\|_n^2.
\end{align*}
Subtracting these two equations yields
\begin{align*}
\mbb{Q}_n(\hat{f}_n) & =\mbb{Q}_n(\pi_{r_n}\tilde{f}_n(f))+\frac{2}{n}\sum_{i=1}^n\epsilon_i\left(\pi_{r_n}\tilde{f}_n(f)(\mbf{x}_i)-\hat{f}_n(\mbf{x}_i)\right)+\|\hat{f}_n-f_0\|_n^2-\|\pi_{r_n}\tilde{f}_n(f)-f_0\|_n^2.
\end{align*}
Now note that
\begin{align*}
\|\pi_{r_n}\tilde{f}_n(f)-f_0\|_n^2 & =\|\pi_{r_n}\tilde{f}_n(f)-\tilde{f}_n(f)+\tilde{f}_n(f)-f_0\|_n^2\\
	& =\|\pi_{r_n}\tilde{f}_n(f)-\tilde{f}_n(f)+(1-\delta_n)\hat{f}_n+\delta_n(f_0+\iota)-f_0\|_n^2\\
	& =\|\pi_{r_n}\tilde{f}_n(f)-\tilde{f}_n(f)+(1-\delta_n)(\hat{f}_n-f_0)+\delta_n\iota\|_n^2\\
	& =\left\langle\pi_{r_n}\tilde{f}_n(f)-\tilde{f}_n(f)+(1-\delta_n)(\hat{f}_n-f_0)+\delta_n\iota,\right.\\
	& \hspace*{3.3cm}\left.\pi_{r_n}\tilde{f}_n(f)-\tilde{f}_n(f)+(1-\delta_n)(\hat{f}_n-f_0)+\delta_n\iota\right\rangle\\
	& =\|\pi_{r_n}\tilde{f}_n(f)-\tilde{f}_n(f)\|_n^2+(1-\delta_n)^2\|\hat{f}_n-f_0\|_n^2+\delta_n^2\\
	& \hspace*{3.3cm}+2(1-\delta_n)\left\langle\pi_{r_n}\tilde{f}_n(f)-\tilde{f}_n(f),\hat{f}_n-f_0\right\rangle\\
	& \hspace*{3.3cm}+2\delta_n\left\langle\pi_{r_n}\tilde{f}_n(f)-\tilde{f}_n(f),\iota\right\rangle\\
	& \hspace*{3.3cm}+2(1-\delta_n)\delta_n\left\langle\hat{f}_n-f_0,\iota\right\rangle\\
	& \leq(1-\delta_n)^2\|\hat{f}_n-f_0\|_n^2+2(1-\delta_n)\left\langle\hat{f}_n-f_0,\delta_n\iota\right\rangle+\delta_n^2\\
	& \hspace*{3.3cm}+2(1-\delta_n)\|\pi_{r_n}\tilde{f}_n(f)-\tilde{f}_n(f)\|_n\|\hat{f}_n-f_0\|_n\\
	& \hspace*{3.3cm}+2\delta_n\|\pi_{r_n}\tilde{f}_n(f)-\tilde{f}_n(f)\|_n+\|\pi_{r_n}\tilde{f}_n(f)-\tilde{f}_n(f)\|_n^2,
\end{align*}
where the last inequality follows from the Cauchy-Schwarz inequality. Since
\begin{align*}
\frac{2}{n}\sum_{i=1}^n\epsilon_i\left(\pi_{r_n}\tilde{f}_n(f)(\mbf{x}_i)-\hat{f}_n(\mbf{x}_i)\right) & =\frac{2}{n}\sum_{i=1}^n\epsilon_i\left(\pi_{r_n}\tilde{f}_n(f)(\mbf{x}_i)-\tilde{f}_n(f)(\mbf{x}_i)+\tilde{f}_n(f)(\mbf{x}_i)-\hat{f}_n(\mbf{x}_i)\right)\\
	& =\frac{2}{n}\sum_{i=1}^n\epsilon_i\left(\pi_{r_n}\tilde{f}_n(f)(\mbf{x}_i)-\tilde{f}_n(f)(\mbf{x}_i)\right)\\
	& \qquad-\frac{2}{n}\delta_n\sum_{i=1}^n\epsilon_i\left(\hat{f}_n(\mbf{x}_i)-f_0(\mbf{x}_i)\right)-\frac{2}{n}\delta_n\sum_{i=1}^n\epsilon_i,
\end{align*}
by the definition of $\hat{f}_n$, we have
\begin{align*}
-\mcal{O}_p(\delta_n^2) & \leq\inf_{f\in\mcal{F}_{r_n}}\mbb{Q}_n(f)-\mbb{Q}_n(\hat{f}_n)\\
	& \leq\mbb{Q}_n(\pi_{r_n}\tilde{f}_n(f))-\mbb{Q}_n(\hat{f}_n)\\
	& =\|\pi_{r_n}\tilde{f}_n(f)-f_0\|_n^2-\|\hat{f}_n-f_0\|_n^2-\frac{2}{n}\sum_{i=1}^n\epsilon_i\left(\pi_{r_n}\tilde{f}_n(f)(\mbf{x}_i)-\hat{f}_n(\mbf{x}_i)\right)\\
	& \leq(1-\delta_n)^2\|\hat{f}_n-f_0\|_n^2-\|\hat{f}_n-f_0\|_n^2+2(1-\delta_n)\delta_n\left\langle\hat{f}_n-f_0,\iota\right\rangle\\
	& \hspace{2cm}+2(1-\delta_n)\|\hat{f}_n-f_0\|_n\|\pi_{r_n}\tilde{f}_n(f)-\tilde{f}_n(f)\|_n\\
	& \hspace{2cm}+2\delta_n\|\pi_{r_n}\tilde{f}_n(f)-\tilde{f}_n(f)\|_n+\|\pi_{r_n}\tilde{f}_n(f)-\tilde{f}_n(f)\|_n^2\\
	& \hspace{2cm}-\frac{2}{n}\sum_{i=1}^n\epsilon_i\left(\pi_{r_n}\tilde{f}_n(f)(\mbf{x}_i)-\tilde{f}_n(f)(\mbf{x}_i)\right)+\frac{2}{n}\delta_n\sum_{i=1}^n\epsilon_i\left(\hat{f}_n(\mbf{x}_i)-f_0(\mbf{x}_i)\right)\\
	& \hspace{2cm}+\frac{2}{n}\delta_n\sum_{i=1}^n\epsilon_i+\mcal{O}_p(\delta_n^2)\\
	& =(-2\delta_n+\delta_n^2)\|\hat{f}_n-f_0\|_n^2\\
	& \hspace{2cm}+2(1-\delta_n)\delta_n\left\langle\hat{f}_n-f_0,\iota\right\rangle+2(1-\delta_n)\|\hat{f}_n-f_0\|_n\|\pi_{r_n}\tilde{f}_n(f)-\tilde{f}_n(f)\|_n\\
	& \hspace{2cm}+2\delta_n\|\pi_{r_n}\tilde{f}_n(f)-\tilde{f}_n(f)\|_n+\|\pi_{r_n}\tilde{f}_n(f)-\tilde{f}_n(f)\|_n^2\\
	& \hspace{2cm}-\frac{2}{n}\sum_{i=1}^n\epsilon_i\left(\pi_{r_n}\tilde{f}_n(f)(\mbf{x}_i)-\tilde{f}_n(f)(\mbf{x}_i)\right)+\frac{2}{n}\delta_n\sum_{i=1}^n\epsilon_i\left(\hat{f}_n(\mbf{x}_i)-f_0(\mbf{x}_i)\right)\\
	& \hspace{2cm}+\frac{2}{n}\delta_n\sum_{i=1}^n\epsilon_i+\mcal{O}_p(\delta_n^2)\\
	& \leq\delta_n^2\|\hat{f}_n-f_0\|_n^2+2(1-\delta_n)\delta_n\left\langle\hat{f}_n-f_0,\iota\right\rangle+2(1-\delta_n)\|\hat{f}_n-f_0\|_n\|\pi_{r_n}\tilde{f}_n(f)-\tilde{f}_n(f)\|_n
\end{align*}
\begin{align*}
	& \hspace{4cm}+2\delta_n\|\pi_{r_n}\tilde{f}_n(f)-\tilde{f}_n(f)\|_n+\|\pi_{r_n}\tilde{f}_n(f)-\tilde{f}_n(f)\|_n^2\\
	& \hspace{4cm}-\frac{2}{n}\sum_{i=1}^n\epsilon_i\left(\pi_{r_n}\tilde{f}_n(f)(\mbf{x}_i)-\tilde{f}_n(f)(\mbf{x}_i)\right)+\frac{2}{n}\delta_n\sum_{i=1}^n\epsilon_i\left(\hat{f}_n(\mbf{x}_i)-f_0(\mbf{x}_i)\right)\\
	& \hspace{4cm}+\frac{2}{n}\delta_n\sum_{i=1}^n\epsilon_i+\mcal{O}_p(\delta_n^2),\numberthis\label{Eq: DeviationOfInnerProd&Sum}
\end{align*}
where the last inequality follows by noting that $(1-\delta_n)^2-1=-2\delta_n+\delta_n^2\leq\delta_n^2$. From the condition (C1), we can get
\begin{align*}
[r_n(d+2)+1] & V_n^2\log[r_nV_n(d+2)+1]\\
	& \leq\left([r_n(d+2)+1]V_n\log[r_nV_n(d+2)+1]\right)^4=o(n).
\end{align*}
Combining with Theorem \ref{Thm: NNSieveConsistency}, we obtain that $\|\hat{f}_n-f_0\|_n=o_p(1)$ and hence $\delta_n^2\|\hat{f}_n-f_0\|_n^2=o_p(\delta_n^2)$. From condition (C3), we have
\begin{align*}
2(1-\delta_n)\|\hat{f}_n-f_0\|_n\|\pi_{r_n}\tilde{f}_n(f)-\tilde{f}_n(f)\|_n & \leq2\|\hat{f}_n-f_0\|_n\|\pi_{r_n}\tilde{f}_n(f)-\tilde{f}_n(f)\|_n\\
	& =\mcal{O}_p\left(\rho_n^{-1}\rho_n\delta_n^2\right)=\mcal{O}_p(\delta_n^2).
\end{align*}
Similarly, since $\rho_n\delta_n=o(1)$, we have
\begin{align*}
2\delta_n\|\pi_{r_n}\tilde{f}_n(f)-\tilde{f}_n(f)\|_n & =\mcal{O}_p(\delta_n\cdot\rho_n\delta_n^2)=o_p(\delta_n^2)\\
\|\pi_{r_n}\tilde{f}_n(f)-\tilde{f}_n(f)\|_n^2 & =\mcal{O}_p(\rho_n^2\delta_n^4)=o_p(\delta_n^2).
\end{align*}
Based on condition (C4), we know that
$$
\frac{2}{n}\sum_{i=1}^n\epsilon_i\left(\pi_{r_n}\tilde{f}_n(f)-\tilde{f}_n(f)\right)=\mcal{O}_p(\delta_n^2),
$$
and from Corollary \ref{Cor: LargeDevEmpProc}, we also have
$$
\frac{2}{n}\delta_n\sum_{i=1}^n\epsilon_i\left(\hat{f}_n(\mbf{x}_i)-f_0(\mbf{x}_i)\right)=o_p(\delta_n\cdot n^{-1/2}).
$$
It follows from these observations that
$$
-2(1-\delta_n)\left\langle\hat{f}_n-f_0,\delta_n\iota\right\rangle+\frac{2\delta_n}{n}\sum_{i=1}^n\epsilon_i\leq\mcal{O}_p(\delta_n^2)+\o_p(\delta_n^2)+o_p(\delta_n\cdot n^{-1/2}),
$$
which implies that
$$
-(1-\delta_n)\left\langle\hat{f}_n-f_0,\iota\right\rangle+\frac{1}{n}\sum_{i=1}^n\epsilon_i\leq\mcal{O}_p(\delta_n)+o_p(n^{-1/2})=o_p(n^{-1/2}).
$$
By replacing $\iota$ with $-\iota$, we can obtain the same result and hence
\begin{align*}
\left|\left\langle\hat{f}_n-f_0,\iota\right\rangle-\frac{1}{n}\sum_{i=1}^n\epsilon_i\right| & \leq\left|(1-\delta_n)\left\langle\hat{f}_n-f_0,\iota\right\rangle-\frac{1}{n}\sum_{i=1}^n\epsilon_i\right|+\delta_n\left|\left\langle\hat{f}_n-f_0,\iota\right\rangle\right|\\
	& \leq o_p(n^{-1/2})+\delta_n\|\hat{f}_n-f_0\|_n\\
	& =o_p(n^{-1/2}).
\end{align*}
Therefore, 
$$
\left\langle\hat{f}_n-f_0,\iota\right\rangle=\frac{1}{n}\sum_{i=1}^n\epsilon_i+o_p(n^{-1/2}),
$$
and the desired result follows from the classical Central Limit Theorem.
\end{proof}

Theorem \ref{Thm: Asymptotic Normality} can be used directly for hypothesis testing of neural network with one hidden layer if we know the variance of the random error $\sigma^2$. In practice, this is rarely the case. To perform hypothesis testing when $\sigma^2$ is unknown, it is natural to find a good estimator of $\sigma^2$ and use a ``plug-in" test statistic. A natural estimator for $\sigma^2$ is
$$
\hat{\sigma}_n^2=\frac{1}{n}\sum_{i=1}^n\left(y_i-\hat{f}_n(\mbf{x}_i)\right)^2=\mbb{Q}_n\left(\hat{f}_n\right).
$$
We then need to establish the asymptotic normality for the statistic $\frac{1}{\hat{\sigma}_n\sqrt{n}}\sum_{i=1}^n\left[\hat{f}_n(\mbf{x}_i)-f_0(\mbf{x}_i)\right]$.

\begin{thm}[Asymptotic Normality for Plug-in Statistic]\label{Thm: Asymptotic Normality Plug-in}
Suppose that $f_0\in C(\mcal{X})$, where $\mcal{X}\subset\mbb{R}^d$ is a compact set and $0\leq\eta_n=o\left(n^{-1}\right)$. Then under the conditions as stated in Theorem \ref{Thm: Asymptotic Normality}, we have
$$
\frac{1}{\hat{\sigma}_n\sqrt{n}}\sum_{i=1}^n\left[\hat{f}_n(\mbf{x}_i)-f_0(\mbf{x}_i)\right]\xrightarrow{d}\mcal{N}(0,1).
$$
\end{thm}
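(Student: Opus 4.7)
The plan is to prove this by Slutsky's theorem: combine Theorem \ref{Thm: Asymptotic Normality} with a consistency result for the variance estimator $\hat{\sigma}_n^2$. Specifically, since
$$
\frac{1}{\hat{\sigma}_n\sqrt{n}}\sum_{i=1}^n[\hat{f}_n(\mbf{x}_i)-f_0(\mbf{x}_i)]=\frac{\sigma}{\hat{\sigma}_n}\cdot\frac{1}{\sigma\sqrt{n}}\sum_{i=1}^n[\hat{f}_n(\mbf{x}_i)-f_0(\mbf{x}_i)],
$$
it suffices to establish that $\hat{\sigma}_n^2\xrightarrow{p}\sigma^2$ and invoke the continuous mapping theorem along with Theorem \ref{Thm: Asymptotic Normality}.

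To prove $\hat{\sigma}_n^2\xrightarrow{p}\sigma^2$, I will expand
$$
\hat{\sigma}_n^2=\frac{1}{n}\sum_{i=1}^n(\epsilon_i+f_0(\mbf{x}_i)-\hat{f}_n(\mbf{x}_i))^2=\frac{1}{n}\sum_{i=1}^n\epsilon_i^2+\|\hat{f}_n-f_0\|_n^2-\frac{2}{n}\sum_{i=1}^n\epsilon_i(\hat{f}_n(\mbf{x}_i)-f_0(\mbf{x}_i)).
$$
The first term tends to $\sigma^2$ in probability by the weak law of large numbers (this was already used at the start of the proof of Lemma \ref{Lm: NNSieveULLN}). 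The second term vanishes in probability by Theorem \ref{Thm: NNSieveConsistency}, since the hypotheses of Theorem \ref{Thm: Asymptotic Normality Plug-in} (notably (C1)) imply the rate condition (\ref{Eq: RateCondition}).

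The main work is to handle the cross term. Because $\hat{f}_n$ lies in $\mcal{F}_{r_n}$ and satisfies $\|\hat{f}_n-f_0\|_n=\mcal{O}_p(\rho_n^{-1})$, Corollary \ref{Cor: LargeDevEmpProc} (whose hypotheses (C1), (C2), and the moment condition on $\epsilon$ are in force) gives
$$
\left|\frac{1}{\sqrt{n}}\sum_{i=1}^n\epsilon_i(\hat{f}_n-f_0)(\mbf{x}_i)\right|\leq\sup_{\|f-f_0\|_n\leq\rho_n^{-1},f\in\mcal{F}_{r_n}}\left|\frac{1}{\sqrt{n}}\sum_{i=1}^n\epsilon_i(f-f_0)(\mbf{x}_i)\right|=o_p(1),
$$
so the cross term is $o_p(n^{-1/2})=o_p(1)$. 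This is the step that does real work; without it we would only know the cross term is $\mcal{O}_p(1)$ from Cauchy--Schwarz.

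Combining these three pieces yields $\hat{\sigma}_n^2\xrightarrow{p}\sigma^2$, and therefore $\sigma/\hat{\sigma}_n\xrightarrow{p}1$ by the continuous mapping theorem. Applying Slutsky's theorem together with Theorem \ref{Thm: Asymptotic Normality} completes the proof. The only genuine obstacle, as noted above, is controlling the cross term, and this is resolved by the uniform empirical-process bound in Corollary \ref{Cor: LargeDevEmpProc}; everything else is bookkeeping.
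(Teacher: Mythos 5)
Your proposal is correct and follows essentially the same route as the paper: expand $\hat{\sigma}_n^2=\mbb{Q}_n(\hat{f}_n)$ into the three terms, handle them via the weak law of large numbers, the consistency/rate result, and Corollary \ref{Cor: LargeDevEmpProc} for the cross term, then conclude by Slutsky's theorem. The only (immaterial) difference is that the paper shows the bias term $\|\hat{f}_n-f_0\|_n^2$ is $o_p(n^{-1/2})$ using the rate of convergence together with (C1) and (C3), whereas you correctly observe that the weaker $o_p(1)$ from consistency already suffices for $\hat{\sigma}_n^2\xrightarrow{p}\sigma^2$.
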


\begin{proof}
Note that
\begin{align*}
\hat{\sigma}_n^2=\mbb{Q}_n(\hat{f}_n) & =\frac{1}{n}\sum_{i=1}^n\left(y_i-\hat{f}_n(\mbf{x}_i)\right)^2=\frac{1}{n}\sum_{i=1}^n\left(f_0(\mbf{x}_i)+\epsilon_i-\hat{f}_n(\mbf{x}_i)\right)^2\\
	& =\frac{1}{n}\sum_{i=1}^n\left(\hat{f}_n(\mbf{x}_i)-f_0(\mbf{x}_i)\right)^2-\frac{2}{n}\sum_{i=1}^n\epsilon_i\left(\hat{f}_n(\mbf{x}_i)-f_0(\mbf{x}_i)\right)+\frac{1}{n}\sum_{i=1}^n\epsilon_i^2\\
	& =\frac{1}{n}\sum_{i=1}^n\epsilon_i^2-\frac{2}{n}\sum_{i=1}^n\epsilon_i\left(\hat{f}_n(\mbf{x}_i)-f_0(\mbf{x}_i)\right)+\|\hat{f}_n-f_0\|_n^2
\end{align*}
Based on the rate of convergence of $\hat{f}_n$ we obtained in Theorem \ref{Thm: Rate of Convergence} and condition (C1), we know that
$$
\left\|\hat{f}_n-f_0\right\|_n^2=\mcal{O}_p^*\left(\max\left\{\|\pi_{r_n}f_0-f_0\|_n^2,\frac{r_n(d+2)\log n}{n}\right\}\right).
$$
Under (C3), $\|\pi_{r_n}f_0-f_0\|_n^2=o\left(\rho_n^2\delta_n^4\right)=o(n^{-1/2})$ and under (C1), we have
\begin{align*}
\left(\frac{r_n(d+2)\log n}{n}\right) & \leq o\left(\frac{n^{1/4}\log n}{n}\right)\\
	& =o\left(\frac{\log n}{n^{3/4}}\right)=o(n^{-1/2}),
\end{align*}
which implies that $\left\|\hat{f}_n-f_0\right\|_n^2=o_p(n^{-1/2})$. Moreover, by the same arguments as in the proof of Theorem \ref{Thm: Asymptotic Normality}, we can show that
$$
\frac{2}{n}\sum_{i=1}^n\epsilon_i\left(\hat{f}_n(\mbf{x}_i)-f_0(\mbf{x}_i)\right)=o_p(n^{-1/2}).
$$
Therefore, 
\begin{align*}
\mbb{Q}_n(\hat{f}_n) & =\frac{1}{n}\sum_{i=1}^n\epsilon_i^2+o_p(n^{-1/2}).
\end{align*}
Based on the Weak Law of Large Numbers, we know that $\frac{1}{n}\sum_{i=1}^n\epsilon_i^2=\sigma^2+o_p(1)$. Therefore,
$$
\hat{\sigma}_n^2=\mbb{Q}_n(\hat{f}_n)=\sigma^2+o_p(1),
$$
and it follows from the Slutsky's Theorem and Theorem \ref{Thm: Asymptotic Normality}, we obtain
$$
\frac{1}{\hat{\sigma}_n\sqrt{n}}\sum_{i=1}^n\left[\hat{f}_n(\mbf{x}_i)-f_0(\mbf{x}_i)\right]=\frac{\sigma}{\hat{\sigma}_n}\frac{1}{\sigma\sqrt{n}}\sum_{i=1}^n\left[\hat{f}_n(\mbf{x}_i)-f_0(\mbf{x}_i)\right]\xrightarrow{d}\mcal{N}(0,1).
$$
\end{proof}

\section{Simulation Studies}
In this section, simulations were conducted to check the validity of the theoretical results obtained in the previous sections. We first used a simple simulation to show that it is hard for the parameter estimators in a neural network with one hidden layer to reach parametric consistency. Then the consistency of the neural network sieve estimators was examined under various simulation scenarios. Finally, we evaluated the asymptotic normality of the neural network sieve estimators.

\subsection{Parameter Inconsistency}\label{Sec: parameter inconsistency}
As mentioned in the introduction, due to the loss of identifiability of the parameters, the parameter estimators obtained from a neural network model are unlikely to be consistent. In this simulation, we use empirical results to confirm such observations. We simulated the response through the following model:
\begin{equation}\label{Eq: ParameterInconsistencyModel}
y_i=f_0(x_i)+\epsilon_i, \quad i=1,\ldots,n,
\end{equation}
where the total sample size $n=500$, $x_1,\ldots,x_n\sim\mrm{i.i.d.}\mcal{N}(0,1)$, $\epsilon_1,\ldots,\epsilon_n\sim\mrm{i.i.d.}\mcal{N}(0,0.1^2)$. The true model is a single-layer neural network with two hidden units.
\begin{equation}\label{Eq: TrueFunction}
f_0(x_i)=-1+\sigma(2x_i+1)-\sigma(-x_i+1),
\end{equation}
When we conducted the simulation, we fitted the simulated data with a single-layer neural network to fit the data by setting the learning rate as 0.1 and performing 3e4 iterations for the back propagation. The cost after 3e4 iterations is 0.0106. Table \ref{Tab: paraInconsistent} summarizes the estimated values of the parameters in this model.

\begin{center}
\begin{threeparttable}
\begin{small}
\captionof{table}{Comparison of the true parameters and the estimated parameters in a single-layer neural network with 2 hidden units.}\label{Tab: paraInconsistent}
\begin{tabular}{cp{1cm}p{1cm}p{1cm}p{1cm}cp{1cm}p{1cm}p{1cm}p{1cm}}
\toprule
\multirow{2}{*}{Estimated Values} & \multicolumn{4}{c}{Weights} & & \multicolumn{3}{c}{Biases}\\\cline{2-5}\cline{7-9}
 & $\gamma_1$ & $\gamma_2$ & $\alpha_1$ & $\alpha_2$ & &  $\gamma_{0,1}$ & $\gamma_{0,2}$ & $\alpha_0$ \\
\hline
True Value & 2.00 & -1.00 & 1.00 & -1.00 & & 1.00 & 1.00 & -1.00\\
Estimated Value & 0.82 & 1.30 & -0.34 & -0.58 & & -0.03 & -0.03 & -1.04\\
\bottomrule
\end{tabular}
\end{small}
\end{threeparttable}
\end{center}
\vspace{0.5cm}

Based on the results in Table \ref{Tab: paraInconsistent}, it is clear that the estimators for most of the weights and biases (except $\alpha_0$) are far from reaching consistency. On the other hand, if we look at the curve of the true function and the curve of the fitted function as shown in Figure \ref{Fig: trueAndFittedCurve}, we can see that most parts are fitted extremely well except for the tail parts. The approximation error $\|\hat{f}-f_0\|_n$ is almost zero as shown in the Figure. This suggests that we should study the asymptotic properties of the estimated function instead of the estimated parameters in the neural network.

\begin{figure}[htbp]
\centering
\includegraphics[width =\textwidth]{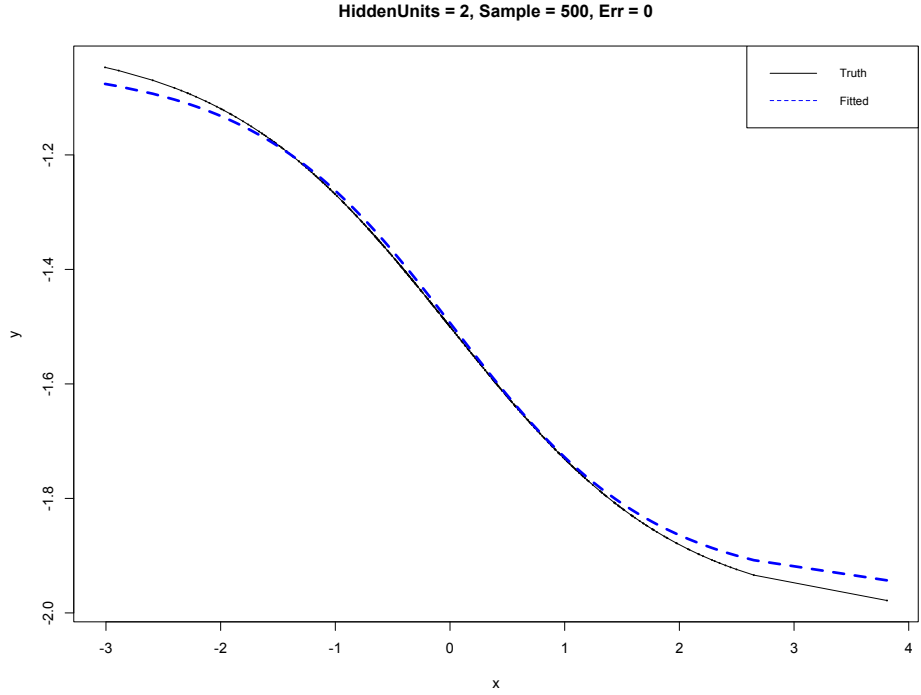}
\caption{Comparison of the true function and the fitted function under the simulation model (\ref{Eq: ParameterInconsistencyModel}). The black curve is the true function defined in (\ref{Eq: TrueFunction}) and the blue dashed curve is the fitted curve obtained after fitting the neural network model. ``Err" stands for the sqaure of empirical $\ell_2$ distance between the esimated function and the true function, that is $Err=(1/n)\sum_{i=1}^n\left[\hat{f}_n(x_i)-f_0(x_i)\right]^2$.}\label{Fig: trueAndFittedCurve}
\end{figure}

\subsection{Consistency for Neural Network Sieve Estimators}\label{Sec: Consistency Simulation}
In this simulation, we are going to check the consistency result from Section \ref{Sec: Consistency} and the validity of the assumption made in Theorem \ref{Thm: NNSieveConsistency}. Based on our construction of the neural network sieve estimators, in each sieve space $\mcal{F}_{r_n}$, there is a constraint on the $\ell_1$ norm for $\mbf{\alpha}$: $\sum_{i=0}^{r_n}|\alpha_i|\leq V_n$. So finding the nearly optimal function in $\mcal{F}_{r_n}$ for $\mbb{Q}_n(f)$ is in fact a constrained optimization problem. A classical way to conduct this optimization is through introducing a Lagrange multiplier for each constraint. Nevertheless, it is usually hard to find an explicit connection between the Lagrange multiplier and the upper bound in the inequality constraint. Instead, we use the subgradient method as discussed in section 7 in \citet{boyd2008note}. The basic idea is to update the parameter $\alpha_0,\ldots,\alpha_{r_n}$ through
$$
\alpha_i^{(k+1)}=\alpha_i^{(k)}-\delta_kg^{(k)},\quad i=0,\ldots,r_n
$$
where $\delta_k>0$ is a step size and $\delta_k$ is chosen to be $0.1/\log(e+k)$ throughout the simulation, which is known as a nonsummable diminishing step size rule. $g^{(k)}$ is a subgradient of the objective or the constraint function $\sum_{j=0}^{r_n}|\alpha_j|-V_n$ at $\mbf{\alpha}^{(k)}$. More specifically, we take
$$
g^{(k)}\in\left\{\begin{array}{ll}
\partial_{\mbf{\alpha}^{(k)}}\mbb{Q}_n(f) & \mrm{if }\sum_{j=0}^{r_n}|\alpha_j|\leq V_n\\
\partial_{\mbf{\alpha}^{(k)}}\sum_{j=0}^{r_n}|\alpha_j| & \mrm{if }\sum_{j=0}^{r_n}|\alpha_j|>V_n
\end{array}
\right..
$$
The updating equations of $\mbf{\gamma}_1,\ldots\mbf{\gamma}_{r_n},\gamma_{0,1},\ldots,\gamma_{0,r_n}$ remains the same as those in the classical gradient descent algorithm. 

We still used equation (\ref{Eq: ParameterInconsistencyModel}) to simulate response, but assumed that the random error $\epsilon_1,\ldots,\epsilon_n$ are i.i.d. $\mcal{N}(0,0.7^2)$. For the true function $f_0(x)$, we considered the following three functions:
\begin{enumerate}[(1)]
\item A neural network with a single hidden layer and 2 hidden units, which is the same as in equation (\ref{Eq: TrueFunction}).
\item A trigonometric function:
	\begin{equation}\label{Eq: trignometric simulation function}
	f_0(x)=\sin\left(\frac{\pi}{3}x\right)+\frac{1}{3}\cos\left(\frac{\pi}{4}x+1\right)
	\end{equation}
\item A continuous function having a non-differential point
	\begin{equation}\label{Eq: non-differential simulation function}
	f_0(x)=\left\{\begin{array}{ll}
	-2x & \mrm{ if }x\leq0\\
	\sqrt{x}\left(x-\frac{1}{4}\right) & \mrm{ if }x>0
	\end{array}
	\right.
	\end{equation}
\end{enumerate}
We then trained a neural network using the subgradient method mentioned above and set the number of iterations used for fitting as 20,000. We chose the growth rate on the number of hidden units $r_n=n^{1/4}$ and the upper bound for $\ell_1$ norm of the weights and bias from the hidden
layer to the output layer $V_n=10n^{1/4}$. Such choice satisfies the condition mentioned in Remark \ref{Rmk: Some Simple Case for Consistency} and hence satisfies the condition in Theorem \ref{Thm: NNSieveConsistency}. We compared the errors $\|\hat{f}_n-f_0\|_n^2$ and the least square errors $\mbb{Q}_n(\hat{f}_n)$ under different sample sizes. The results are summarized in Table \ref{Tab: ConsistencyErr1NN}.

\begin{table}[htbp]
\caption{Comparison of errors $\|\hat{f}_n-f_0\|_n^2$ and the least square errors $\mbb{Q}_n(\hat{f}_n)$ after 20,000 iterations under different sample sizes.}\label{Tab: ConsistencyErr1NN}
\begin{center}
\begin{tabular}{lccccccccc}
\hline
\multirow{2}{*}{Sample Sizes} &  \multicolumn{2}{c}{Neural Network} & & \multicolumn{2}{c}{Sine} & & \multicolumn{2}{c}{Piecewise Continuous}\\
\cline{2-3}\cline{5-6}\cline{8-9}
	&  $\|\hat{f}_n-f_0\|_n^2$ & $\mbb{Q}_n(\hat{f}_n)$ & & $\|\hat{f}_n-f_0\|_n^2$ & $\mbb{Q}_n(\hat{f}_n)$ & & $\|\hat{f}_n-f_0\|_n^2$ & $\mbb{Q}_n(\hat{f}_n)$\\
 \hline
 50 & 3.33E-2 & 0.519 & & 6.04E-2 & 0.513  & & 6.20E-1 & 1.124\\
 100 & 2.79E-2 & 0.552 & & 3.04E-2 & 0.587 & & 3.20E-1 & 0.920\\
 200 & 6.05E-3 & 0.500 & & 1.05E-2 & 0.501 & & 2.51E-1 & 0.786\\
 500 & 8.15E-3 & 0.484 & & 1.19E-2 & 0.499 & & 3.26E-1 & 0.769\\
 1000 & 3.02E-3 & 0.475 & & 1.54E-2 & 0.480 & & 2.98E-2 & 0.489\\
 2000 & 2.88E-3 & 0.500 & & 9.72E-3 & 0.506 & & 1.69E-2 & 0.515\\
 \hline
\end{tabular}
\end{center}
\end{table}

As we can see from Table \ref{Tab: ConsistencyErr1NN}, the errors $\|\hat{f}_n-f_0\|_n^2$ overall has a decreasing pattern as the sample size increases. There are some cases where the error becomes a little bit larger when the sample sizes increases (e.g. the errors using 500 samples in all scenarios is larger than those errors using 200 sample). One explanation is that the number of hidden units increase from 3 (for 200 samples) to 4 (for 500 samples) under our simulation setup, which adds variation to the estimation performance. Overall, the table shows that the estimated function $\hat{f}_n$ is indeed consistent in the sense that $\|\hat{f}_n-f_0\|_n=o_p^*(1)$. Figure \ref{Fig: Consistency1NN} plots the fitted functions and the true function, from which we can straightforwardly visualize the result more and draw the conclusions.

\begin{figure}[htbp]
\centering
\includegraphics[height=0.27\textheight, width=0.8\textwidth]{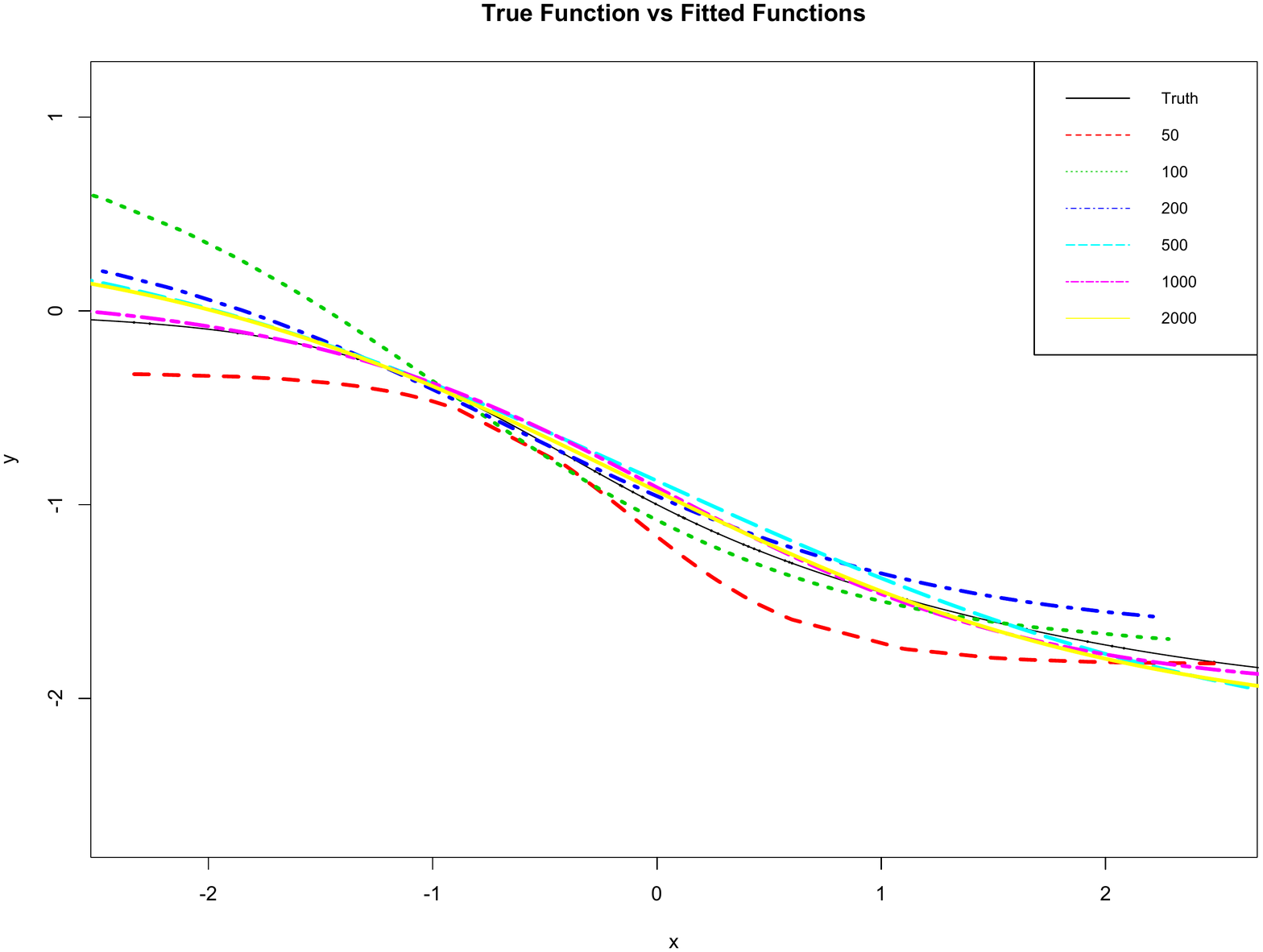}\\
\includegraphics[height=0.27\textheight, width=0.8\textwidth]{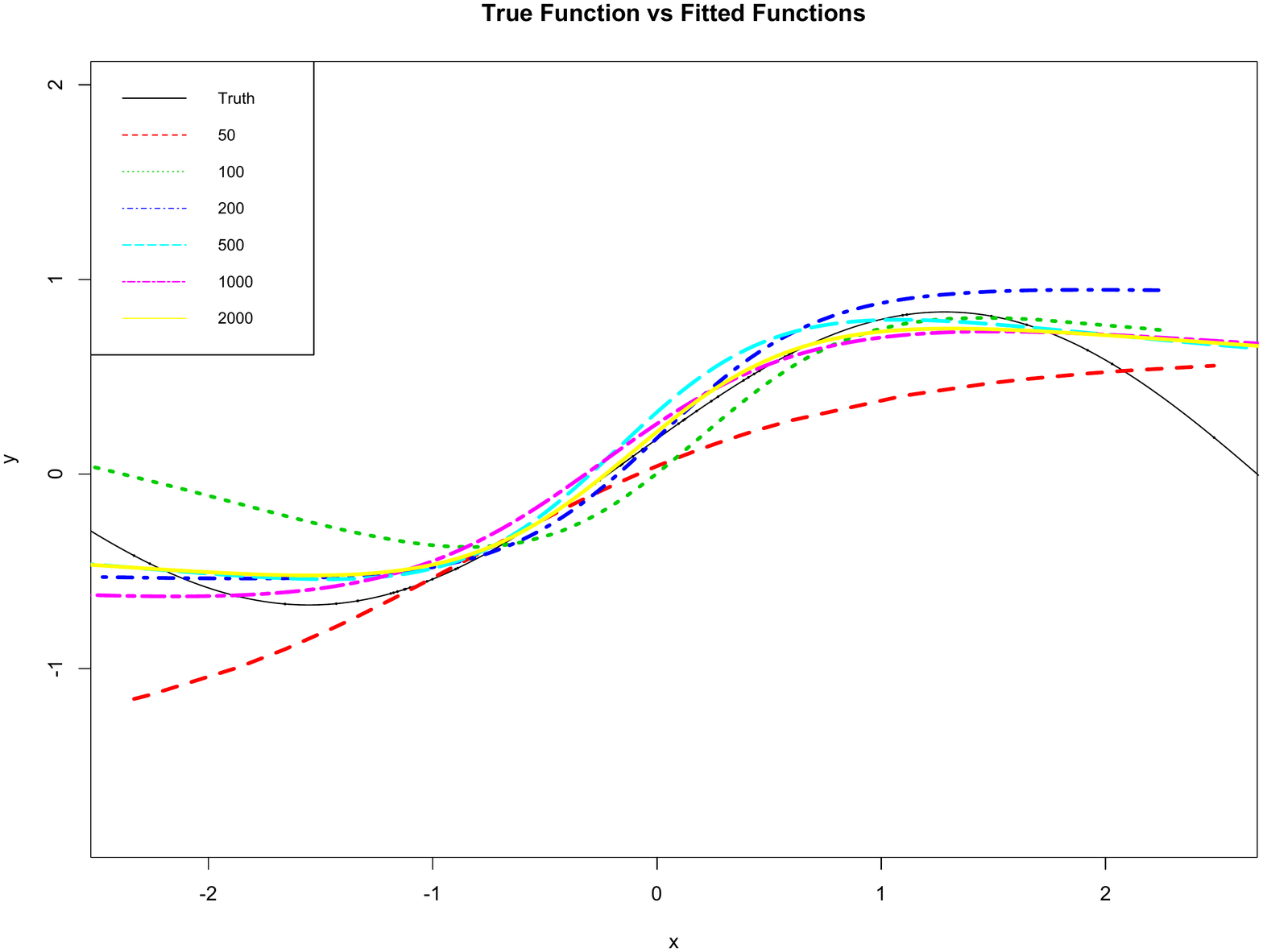}\\
\includegraphics[height=0.27\textheight, width=0.8\textwidth]{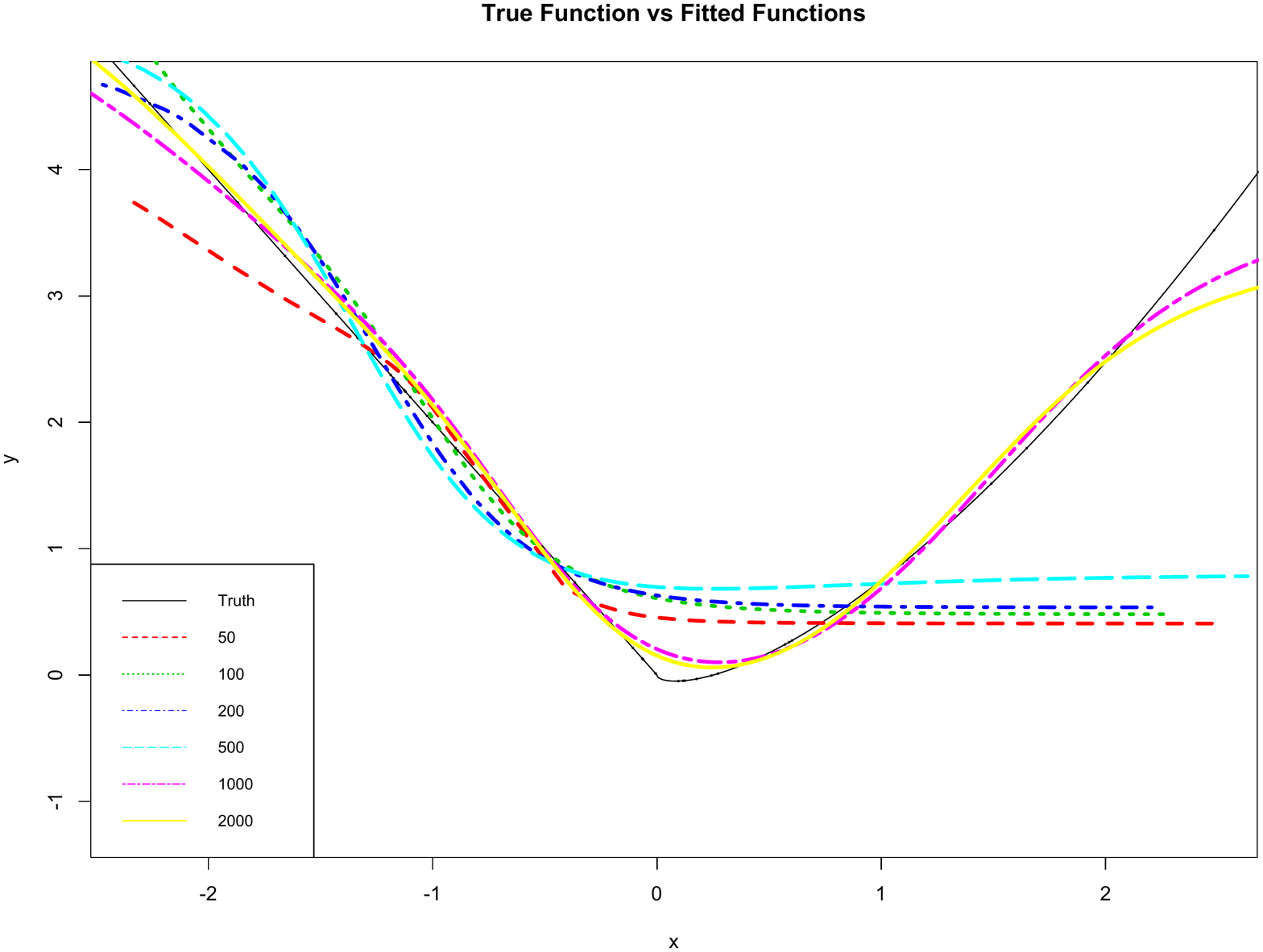}
\caption{Comparison of the true function and the fitted function for three different types of non-linear functions. The top panel shows the scenario when the true function is a single layer neural network; the middle panel shows the scenario when the true function is a sine function, and the bottom panel shows the scenario when the true function is a continuous function having a non-differentiable point. As we can see from all the cases, the fitted curve becomes closer to the truth as the sample size increases.}\label{Fig: Consistency1NN}
\end{figure}

\subsection{Asymptotic Normality for Neural Network Sieve Estimators}
The last part of the simulation focuses on the asymptotic normality derived in Theorem \ref{Thm: Asymptotic Normality}. We still considered the same types of true functions as described in section \ref{Sec: Consistency Simulation} but sampled the random errors from the standard normal distribution. In this simulation, we still used the subgradient method to obtain the fitted model. The number of iterations used for fitting was set at 20,000. What is different from section \ref{Sec: Consistency Simulation} is the growth rates for $r_n$ and $V_n$ set in this simulation. As mentioned in section \ref{Sec: Asymptotic Normality}, the growth rates required for asymptotic normality are slower than those required for consistency. Therefore, we chose $r_n=n^{1/8}$ and $V_n=10n^{1/10}$. Such choice satisfies the condition (C1) in Theorem \ref{Thm: Asymptotic Normality}. In order to get the normal Q-Q plot for $n^{-1/2}\sum_{i=1}^n\left[\hat{f}_n(\mbf{x}_i)-f_0(\mbf{x}_i)\right]$, we repeated the simulation 200 times. 

Figure \ref{QQPlot1} to Figure \ref{QQPlot3} are the normal Q-Q plots under different nonlinear functions and various sample sizes. From the figures, we found that the statistic $n^{-1/2}\sum_{i=1}^n\left[\hat{f}_n(\mbf{x}_i)-f_0(\mbf{x}_i)\right]$ fit the normal distribution pretty well under all simulation scenarios. It is also worth to note that the Q-Q plots looks similar under all simulation scenarios. This is what we would expect since the limiting distribution for the statistic $n^{-1/2}\sum_{i=1}^n\left[\hat{f}_n(\mbf{x}_i)-f_0(\mbf{x}_i)\right]$ is $\mcal{N}(0,1)$ under all scenarios. Another implication we can obtain from the Q-Q plots is that the statistic $n^{-1/2}\sum_{i=1}^n\left[\hat{f}_n(\mbf{x}_i)-f_0(\mbf{x}_i)\right]$ is robust to the choice of $f_0$. Therefore, as long as the true function $f_0$ is continuous, $\mcal{N}(0,1)$ is a good asymptotic distribution for $n^{-1/2}\sum_{i=1}^n\left[\hat{f}_n(\mbf{x}_i)-f_0(\mbf{x}_i)\right]$, which facilitates hypothesis testing.

\begin{figure}[htbp]
\centering
\includegraphics[width=\textwidth]{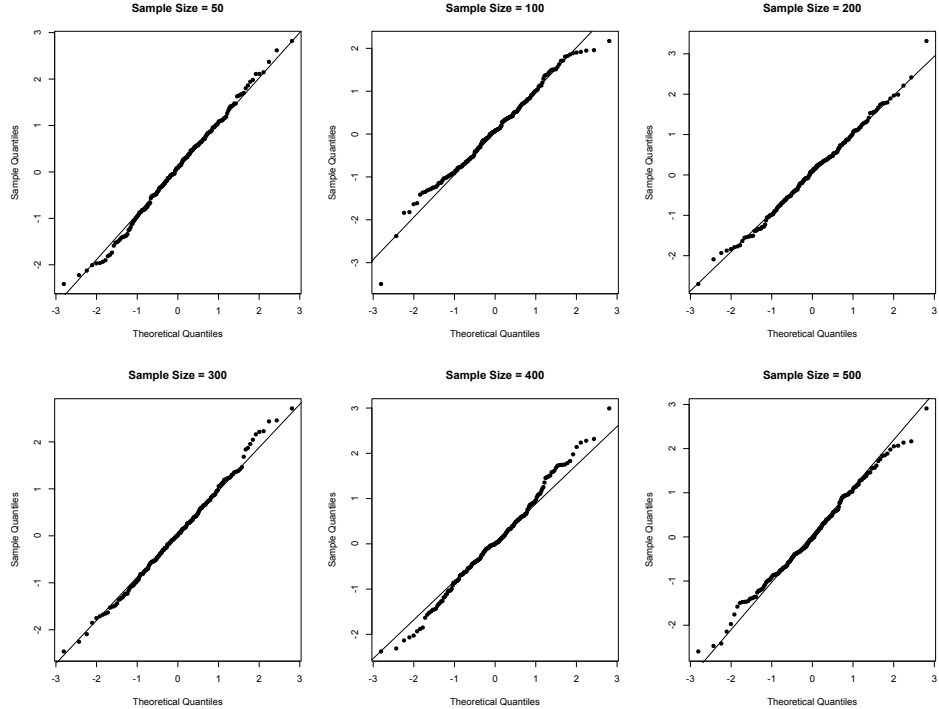}
\caption{Normal Q-Q plot for $n^{-1/2}\sum_{i=1}^n\left[\hat{f}_n(\mbf{x}_i)-f_0(\mbf{x}_i)\right]$ various sample sizes. The true function $f_0$ is a single-layer neural network with 2 hidden units as defined in (\ref{Eq: TrueFunction}).}\label{QQPlot1}
\end{figure}

\begin{figure}[htbp]
\centering
\includegraphics[width=\textwidth]{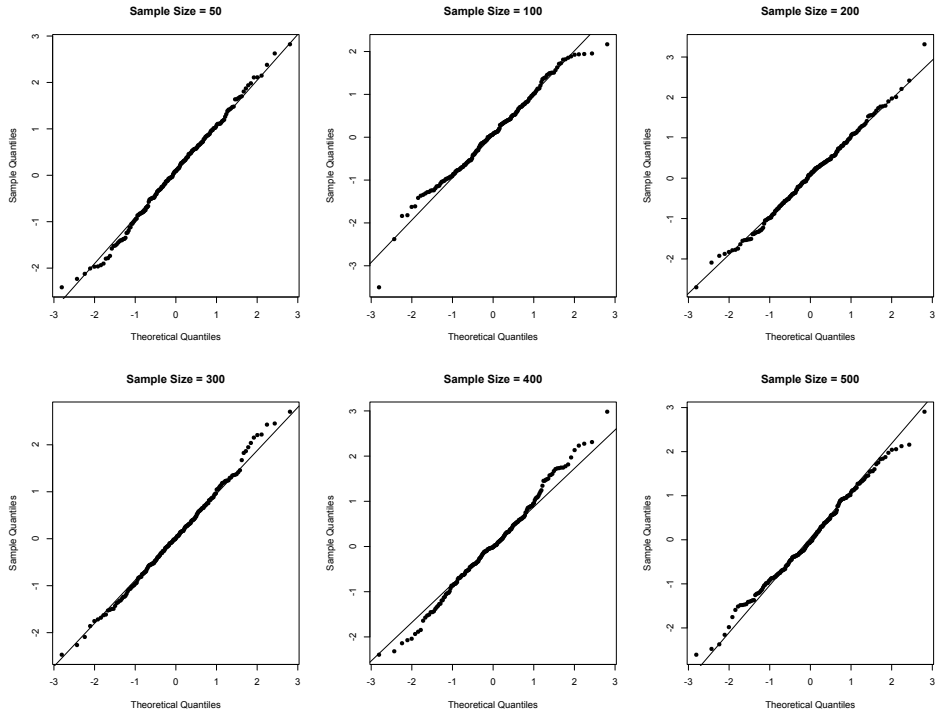}
\caption{Normal Q-Q plot for $n^{-1/2}\sum_{i=1}^n\left[\hat{f}_n(\mbf{x}_i)-f_0(\mbf{x}_i)\right]$ various sample sizes. The true function $f_0$ is a trigonometric function as defined in (\ref{Eq: trignometric simulation function}).}\label{QQPlot2}
\end{figure}

\begin{figure}[htbp]
\centering
\includegraphics[width=\textwidth]{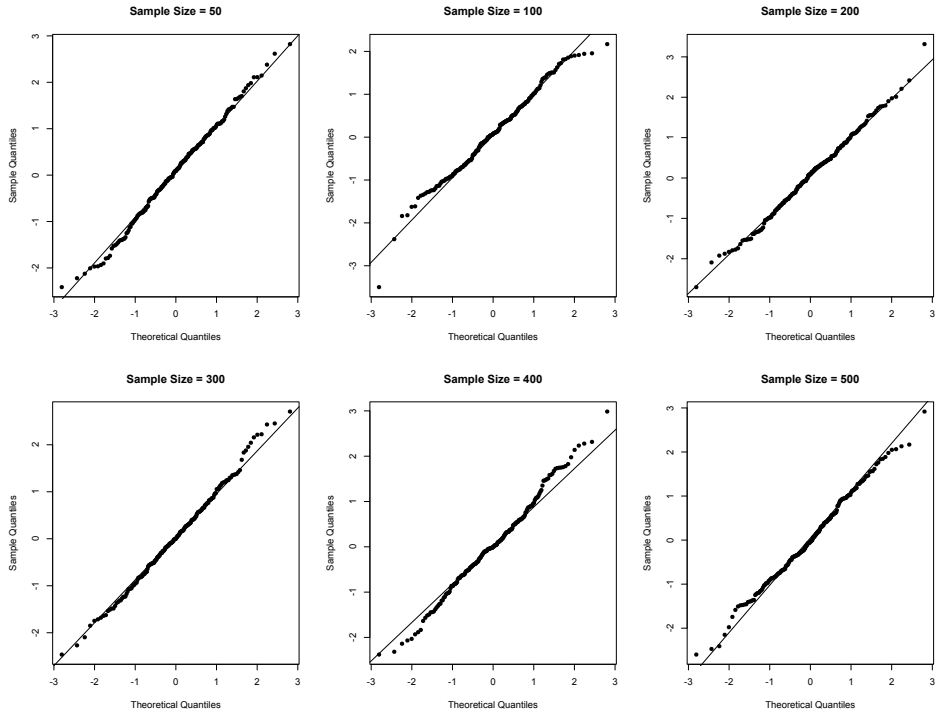}
\caption{Normal Q-Q plot for $n^{-1/2}\sum_{i=1}^n\left[\hat{f}_n(\mbf{x}_i)-f_0(\mbf{x}_i)\right]$ various sample sizes. The true function $f_0$ is a continuous function having a non-differential point as defined in (\ref{Eq: non-differential simulation function}).}\label{QQPlot3}
\end{figure}

Besides the Q-Q plots, we also conducted the normality tests to check whether $n^{-1/2}$ $\sum_{i=1}^n\left[\hat{f}_n(\mbf{x}_i)-f_0(\mbf{x}_i)\right]$ follows the standard normal distribution. Specifically, we used the Shapiro-Wilks test and the Kolmogorov-Smirnov test to perform the normality test. Table \ref{Tab: normal test} summarizes the $p$-values for both normality tests. As we observed from Table \ref{Tab: normal test}, in all cases, we failed to reject that $n^{-1/2}\sum_{i=1}^n\left[\hat{f}_n(\mbf{x}_i)-f_0(\mbf{x}_i)\right]$ follows the standard normal distribution.

\begin{table}[htbp]
\caption{Summary of results from the Shapiro-Wilks test and the Kolmogorov-Smirnov test. We use ``NN", ``TRI" and ``ND" to denote a neural network described in (\ref{Eq: TrueFunction}), a trigonometric function described in (\ref{Eq: trignometric simulation function}) and a continuous function having a non-differential point described in (\ref{Eq: non-differential simulation function}), respectively.}\label{Tab: normal test}
\begin{center}
\begin{tabular}{ccccccccc}
\hline
\multirow{2}{*}{Sample Sizes} &  \multicolumn{3}{c}{Shapiro-Wilks Test} & & \multicolumn{3}{c}{Kolmogorov-Smirnov Test}\\
\cline{2-4}\cline{6-8}
 & NN & TRI & ND & & NN & TRI & ND\\
\hline
50 & 0.878 & 0.884 & 0.881 & & 0.584 & 0.597 & 0.595\\
100 & 0.098 & 0.095 & 0.095 & & 0.472 & 0.508 & 0.484\\
200 & 0.940 & 0.944 & 0.944 & & 0.731 & 0.719 & 0.708\\
300 & 0.884 & 0.888 & 0.872 & & 0.976 & 0.986 & 0.973\\
400 & 0.514 & 0.525 & 0.513 & & 0.670 & 0.754 & 0.708\\
500 & 0.768 & 0.778 & 0.768 & & 0.733 & 0.769 & 0.733\\
\hline
\end{tabular}
\end{center}
\end{table}

\section{Discussion}
We have investigated the asymptotic properties, including the consistency, rate of convergence, and asymptotic normality for neural network sieve estimators with one hidden layer. While in practice, the number of hidden unites is often chosen ad hoc, it is important to note that the conditions in the theorems provide theoretical guidelines on choosing the number of hidden units for a neural network with one hidden layer to achieve the desired statistical properties. The validity of the conditions made in the theorems has also been checked through simulation results. Theorem \ref{Thm: Asymptotic Normality} and Theorem \ref{Thm: Asymptotic Normality Plug-in} can be served as preliminary work for conducting hypothesis testing on $H_0: f_0=h_0$ for a fixed function $h_0$. However, there is currently no simple way to check conditions (C3) and (C4) in the theorem, which requires further researches on local entropy numbers for classes of neural networks.

The work conducted in this paper mainly focuses on sieve estimators based on neural networks with one hidden layer and standard sigmoid activation function. The work presented in this paper can be extended in several ways. The main theorems in this paper depend heavily on the covering number or the entropy number of the function class consisting of neural network with one hidden layer. Theorem 14.5 in \citet{anthony2009neural} provides a general upper bound for the covering number of a function class consisting of deep neural networks with Lipshitz continuous activation functions. Therefore, it is possible to extend our results discussed in this paper to a deep neural network with Lipshitz continuous activation functions. It is also worthwhile to investigate asymptotic properties of other commonly used deep learning models such as convolutional neural networks (CNNs) and recurrent neural networks (RNNs).

When we train a deep neural network, we usually need to face an overfitting issue. In practice, regularization is frequently used to reduce overfitting. Another natural extension of the work discussed in this paper is to modify the loss function by involving some regularization terms. By taking regularization into account, we believe the theories could have a much broader application in real world scenarios.

\section*{Acknowledgment}
X. Shen, C. Jiang, and Q. Lu are supported by the National Institute on Drug Abuse (Award No. K01DA033346
and No. R01DA043501). L. Sakhanenko is supported by NSF grant DMS- 1742881.

\section*{Appendix}
In this appendix, we are going to explore some basic properties of the parameter space $(\mcal{F},\|\cdot\|_n)$ discussed in the main text.

\begin{prop}\label{Prop: Pseudo-metric Parameter space}
The space $(\mcal{F},\|\cdot\|_n)$ is a pseudo-normed space.
\end{prop}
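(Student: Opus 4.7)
The plan is to verify the three defining axioms of a pseudo-norm directly from the definition $\|f\|_n^2 = n^{-1}\sum_{i=1}^n f^2(\mathbf{x}_i)$, and to observe why the first axiom holds only in the pseudo-sense rather than the full norm sense.

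First I would establish non-negativity: since $\|f\|_n^2$ is an average of squares, it is non-negative, so the square root is well-defined and non-negative. At this stage I would also point out that $\|f\|_n = 0$ only forces $f(\mathbf{x}_i) = 0$ at the design points $\mathbf{x}_1, \dots, \mathbf{x}_n$, but not everywhere on $\mathcal{X}$; this is precisely why $\|\cdot\|_n$ is a pseudo-norm rather than a norm, and why the statement of the proposition is made in terms of a pseudo-norm.

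Next I would verify absolute homogeneity: for any scalar $c \in \mathbb{R}$ and any $f \in \mathcal{F}$, factoring $c^2$ out of the sum gives $\|cf\|_n^2 = c^2 \|f\|_n^2$, so $\|cf\|_n = |c|\,\|f\|_n$. This step is purely algebraic.

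The main step is the triangle inequality $\|f+g\|_n \leq \|f\|_n + \|g\|_n$. I would obtain this as an instance of the Minkowski inequality on $\mathbb{R}^n$ with the $\ell_2$ norm: writing $\mathbf{u} = (f(\mathbf{x}_1),\dots,f(\mathbf{x}_n))$ and $\mathbf{v} = (g(\mathbf{x}_1),\dots,g(\mathbf{x}_n))$, we have $\|f+g\|_n = n^{-1/2}\|\mathbf{u}+\mathbf{v}\|_2 \leq n^{-1/2}(\|\mathbf{u}\|_2 + \|\mathbf{v}\|_2) = \|f\|_n + \|g\|_n$. Alternatively, I would expand $\|f+g\|_n^2 = \|f\|_n^2 + \|g\|_n^2 + 2 n^{-1}\sum_i f(\mathbf{x}_i)g(\mathbf{x}_i)$ and bound the cross term by Cauchy--Schwarz applied to the empirical inner product. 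There is no real obstacle here; the only thing worth flagging is the distinction between norm and pseudo-norm in the first axiom, which the rest of the paper relies on implicitly when using $\|\cdot\|_n$ on $\mathcal{F}$.
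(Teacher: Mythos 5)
Your proof is correct and follows essentially the same route as the paper: non-negativity by inspection, homogeneity by factoring out $c^2$, and the triangle inequality by reducing to the Euclidean ($\ell_2$) triangle inequality on $\mathbb{R}^n$. Your added remark on why $\|f\|_n=0$ does not force $f\equiv 0$ is a nice clarification but not a substantive difference.
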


\begin{proof}
Note that $\|f\|_n=\left(\frac{1}{n}\sum_{i=1}^nf^2(\mbf{x}_i)\right)^{1/2}$.
\begin{enumerate}[(i)]
\item Based on the definition of $\|\cdot\|_n$, it is clear that $\|f\|_n\geq0$, for any $f\in\mcal{F}$.
\item For any $\lambda\in\mbb{R}$ and $f\in\mcal{F}$,
	\begin{align*}
	\|\lambda f\|_n & =\left(\frac{1}{n}\sum_{i=1}^n\lambda^2f^2(\mbf{x}_i)\right)^{1/2}=|\lambda|\|f\|_n.
	\end{align*}
\item For any $f,g\in\mcal{F}$,
	\begin{align*}
	\|f+g\|_n & =\left(\frac{1}{n}\sum_{i=1}^n(f(\mbf{x}_i)+g(\mbf{x}_i))^2\right)^{1/2}=\left(\sum_{i=1}^n\left(\frac{1}{\sqrt{n}}f(\mbf{x}_i)+\frac{1}{\sqrt{n}}g(\mbf{x}_i)\right)^2\right)^{1/2}\\
		& \leq\left(\sum_{i=1}^n\left(\frac{1}{\sqrt{n}}f(\mbf{x}_i)\right)^2\right)^{1/2}+\left(\sum_{i=1}^n\left(\frac{1}{\sqrt{n}}g(\mbf{x}_i)\right)^2\right)^{1/2}\\
		& =\|f\|_n+\|g\|_n,
	\end{align*}
	where we applied the triangle inequality to the classical Euclidean norm.
\end{enumerate}
Therefore, we showed that $(\mcal{F},\|\cdot\|_n)$ is a pseudo-normed space.
\end{proof}

\begin{prop}\label{Prop: Inner Product on F}
There is an pseudo-inner product on $\mcal{F}$ such that $\|f\|^2=\langle f,f\rangle$ for any $f\in\mcal{F}$. Moreover, the pseudo-inner product is given by
$$
\langle f,g\rangle=\frac{1}{n}\sum_{i=1}^nf(\mbf{x}_i)g(\mbf{x}_i),\quad\forall f,g\in\mcal{F}.
$$
\end{prop}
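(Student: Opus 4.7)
The plan is to proceed by direct verification: define the bilinear form
$$
\langle f,g\rangle := \frac{1}{n}\sum_{i=1}^n f(\mbf{x}_i)g(\mbf{x}_i),
$$
and then check the four defining properties of a pseudo-inner product on $\mcal{F}$. The compatibility with the pseudo-norm is immediate, since
$$
\langle f,f\rangle = \frac{1}{n}\sum_{i=1}^n f^2(\mbf{x}_i) = \|f\|_n^2,
$$
which simultaneously establishes the claimed identity $\|f\|^2 = \langle f,f\rangle$ and shows $\langle f,f\rangle \ge 0$ for all $f\in\mcal{F}$.

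Next I would verify symmetry and bilinearity. Symmetry $\langle f,g\rangle = \langle g,f\rangle$ follows from commutativity of multiplication in $\mbb{R}$. Linearity in the first argument, $\langle \lambda f_1 + f_2, g\rangle = \lambda\langle f_1,g\rangle + \langle f_2,g\rangle$ for $\lambda \in \mbb{R}$ and $f_1,f_2,g \in \mcal{F}$, follows from the linearity of finite sums. Symmetry then transfers linearity to the second slot.

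The only subtlety is that the form is merely \emph{pseudo}-inner product rather than an inner product: $\langle f,f\rangle = 0$ does not force $f\equiv 0$ on $\mcal{X}$, but only $f(\mbf{x}_i)=0$ for $i=1,\dots,n$. This matches the earlier Proposition \ref{Prop: Pseudo-metric Parameter space} which established only that $\|\cdot\|_n$ is a pseudo-norm. I would explicitly mention this point so that the word ``pseudo'' is justified.

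There is no real obstacle in this proof—it is essentially a calculation-level check. The only thing worth noting is that one could alternatively derive the form via the polarization identity
$$
\langle f,g\rangle = \tfrac{1}{4}\bigl(\|f+g\|_n^2 - \|f-g\|_n^2\bigr),
$$
after verifying the parallelogram law for $\|\cdot\|_n$; this would give a more abstract derivation, but the direct verification above is shorter and makes the explicit formula transparent, so that is the route I would take.
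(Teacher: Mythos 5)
Your proposal is correct, but it takes a different route from the paper. You verify directly that the explicit bilinear form $\langle f,g\rangle=\frac{1}{n}\sum_{i=1}^nf(\mbf{x}_i)g(\mbf{x}_i)$ is symmetric, bilinear, positive semidefinite, and compatible with $\|\cdot\|_n$ --- a calculation-level check that is entirely adequate since the statement already exhibits the candidate form. The paper instead goes through the route you mention only as an alternative: it invokes the Fr\'echet--von Neumann--Jordan theorem, verifies the parallelogram law
$$
\|f+g\|_n^2+\|f-g\|_n^2=2\|f\|_n^2+2\|g\|_n^2
$$
by expanding the sums, and then \emph{derives} the explicit formula from the polarization identity $\langle f,g\rangle=\frac{1}{4}(\|f+g\|_n^2-\|f-g\|_n^2)$. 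The abstract route has the advantage of producing the inner product rather than guessing it, and it demonstrates that the pseudo-norm is genuinely of Hilbertian type; your direct route is shorter and makes bilinearity and symmetry transparent without appeal to an external theorem. Your remark on why the result is only a \emph{pseudo}-inner product (that $\langle f,f\rangle=0$ forces only $f(\mbf{x}_i)=0$ at the design points, not $f\equiv0$) is a worthwhile addition that the paper's proof leaves implicit. Either argument is complete and correct.
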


\begin{proof}
Based on the theorem attributed to Fréchet, von Neumann and Jordan (see for example, Proposition 14.1.2 in \citet{blanchard2015mathematical}), to show the existence of the inner product, it suffices to check the parallelogram law of the pseudo-norm and the corresponding pseudo-inner product can be obtained via the polarization identity. To check to validity of the parallelogram law, we note that for any $f,g\in\mcal{F}$,
\begin{align*}
\|f+g\|_n^2+\|f-g\|_n^2 & =\frac{1}{n}\sum_{i=1}^n\left(f(\mbf{x}_i)+g(\mbf{x}_i)\right)^2+\frac{1}{n}\sum_{i=1}^n\left(f(\mbf{x}_i)-g(\mbf{x}_i)\right)^2\\
	& =\frac{1}{n}\sum_{i=1}^nf^2(\mbf{x}_i)+\frac{2}{n}\sum_{i=1}^nf(\mbf{x}_i)g(\mbf{x}_i)+\frac{1}{n}\sum_{i=1}^ng^2(\mbf{x}_i)\\
	& \hspace{2cm}+\frac{1}{n}\sum_{i=1}^nf^2(\mbf{x}_i)-\frac{2}{n}\sum_{i=1}^nf(\mbf{x}_i)g(\mbf{x}_i)+\frac{1}{n}\sum_{i=1}^ng^2(\mbf{x}_i)\\
	& =\frac{2}{n}\sum_{i=1}^nf^2(\mbf{x}_i)+\frac{2}{n}\sum_{i=1}^ng^2(\mbf{x}_i)\\
	& =2\|f\|_n^2+2\|g\|_n^2.
\end{align*}
Hence, the parallelogram law is satisfied based on the pseudo-norm, and the pseudo-inner product does exist. By using the polarization identity, for any $f,g\in\mcal{F}$, we have
\begin{align*}
\langle f, g\rangle & =\frac{1}{4}\left(\|f+g\|_n^2-\|f-g\|_n^2\right)\\
	& =\frac{1}{4}\left(\frac{1}{n}\sum_{i=1}^nf^2(\mbf{x}_i)+\frac{2}{n}\sum_{i=1}^nf(\mbf{x}_i)g(\mbf{x}_i)+\frac{1}{n}\sum_{i=1}^ng^2(\mbf{x}_i)-\frac{1}{n}\sum_{i=1}^nf^2(\mbf{x}_i)\right.\\
	& \hspace{6cm}\left.+\frac{2}{n}\sum_{i=1}^nf(\mbf{x}_i)g(\mbf{x}_i)-\frac{1}{n}\sum_{i=1}^ng^2(\mbf{x}_i)\right)\\
	& =\frac{1}{n}\sum_{i=1}^nf(\mbf{x}_i)g(\mbf{x}_i).
\end{align*}
\end{proof}

Let
$$
\mcal{G}=\left\{g:\mbb{R}\to\mbb{R}, \int\left|g'(z)\right|\mrm{d}z\leq M\right\}
$$
be the class of functions of bounded variation in $\mbb{R}$ (see Example 9.3.3 in \citet{van2000empirical}). The following proposition shows that $\mcal{F}_{r_n}\subset\mcal{G}$ for a fixed $n$.

\begin{prop}\label{Prop: BoundedVariation}
For a fixed $n$, $\mcal{F}_{r_n}\subset\mcal{G}$.
\end{prop}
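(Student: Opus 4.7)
The plan is to exploit the very simple structure of $\mcal{F}_{r_n}$ in the case $d=1$: every element is a finite sum of shifted and scaled sigmoids plus a constant, so differentiating and integrating are painless. The key identity driving the argument is $\int_{-\infty}^{\infty}\sigma'(u)\,du=\sigma(\infty)-\sigma(-\infty)=1$, which makes each sigmoid unit contribute essentially $|\alpha_j|$ to the total variation, so the constraint $\sum_{j=0}^{r_n}|\alpha_j|\le V_n$ immediately gives the envelope.

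Concretely, first I would fix $f\in\mcal{F}_{r_n}$ with parametrization
$$f(x)=\alpha_0+\sum_{j=1}^{r_n}\alpha_j\sigma(\gamma_j x+\gamma_{0,j}),$$
note that $f$ is $C^{\infty}$, and write
$$f'(x)=\sum_{j=1}^{r_n}\alpha_j\gamma_j\sigma'(\gamma_j x+\gamma_{0,j}).$$
Using $\sigma'\ge 0$, the triangle inequality gives
$$|f'(x)|\le\sum_{j=1}^{r_n}|\alpha_j||\gamma_j|\sigma'(\gamma_j x+\gamma_{0,j}).$$

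Next I would integrate term by term. For each $j$ with $\gamma_j\ne 0$, the substitution $u=\gamma_j x+\gamma_{0,j}$ yields
$$\int_{\mbb{R}}\sigma'(\gamma_j x+\gamma_{0,j})\,\mrm{d}x=\frac{1}{|\gamma_j|}\int_{\mbb{R}}\sigma'(u)\,\mrm{d}u=\frac{1}{|\gamma_j|},$$
since $\sigma(u)\to 1$ as $u\to\infty$ and $\sigma(u)\to 0$ as $u\to-\infty$. For $j$ with $\gamma_j=0$ the corresponding summand in $f'$ is identically zero and contributes nothing. Summing,
$$\int_{\mbb{R}}|f'(x)|\,\mrm{d}x\le\sum_{j=1}^{r_n}|\alpha_j|\le V_n,$$
so $M=V_n$ works and $f\in\mcal{G}$.

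There is essentially no obstacle: the only minor wrinkle to flag in the write-up is the degenerate case $\gamma_j=0$ (handled above) and the implicit assumption $d=1$, since the statement of $\mcal{G}$ only makes sense on $\mbb{R}$. The uniform bound $M=V_n$ is a pleasant bonus: it shows that the bounded-variation norm of elements of $\mcal{F}_{r_n}$ grows no faster than the $\ell_1$-budget $V_n$ on the output weights, which is consistent with the discussion after Theorem \ref{Thm: Rate of Convergence} that relates the rate of convergence of $\hat{f}_n$ in dimension one to the classical $\mcal{O}_p(n^{-1/3})$ rate for bounded-variation classes.
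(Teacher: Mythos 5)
Your proof is correct and follows essentially the same route as the paper: differentiate term by term, use the triangle inequality, and substitute $u=\gamma_j x+\gamma_{0,j}$ so that each unit contributes $|\alpha_j|\int\sigma'(u)\,\mrm{d}u=|\alpha_j|$, giving the bound $V_n$. Your treatment is in fact slightly cleaner on two small points the paper glosses over — the degenerate case $\gamma_j=0$ and the sign of the Jacobian in the substitution — but the argument is the same.
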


\begin{proof}
For any $f\in\mcal{F}_{r_n}$, we have
$$
f(x)=\alpha_0+\sum_{j=1}^{r_n}\alpha_j\sigma\left(\gamma_j x+\gamma_{0,j}\right),
$$
so that
$$
f'(x)=\sum_{j=1}^{r_n}\alpha_j\gamma_j\sigma\left(\gamma_jx+\gamma_{0,j}\right)\left[1-\sigma\left(\gamma_jx+\gamma_{0,j}\right)\right].
$$
Without loss of generality, we assume that $\gamma_j\neq0$ for $j=1,\ldots,r_n$. Note that
\begin{align*}
\int|f'(x)|\mrm{d}x & =\int\left|\sum_{j=1}^{r_n}\alpha_j\gamma_j\sigma\left(\gamma_jx+\gamma_{0,j}\right)\left[1-\sigma\left(\gamma_jx+\gamma_{0,j}\right)\right]\right|\mrm{d}x\\
	& \leq\sum_{j=1}^{r_n}|\alpha_j||\gamma_j|\int\sigma(\gamma_jx+\gamma_{0,j})[1-\sigma(\gamma_jx+\gamma_{0,j})]\mrm{d}x\\
	& \leq\sum_{j=1}^{r_n}|\alpha_j|\frac{|\gamma_j|}{\gamma_j}\int\sigma(u_j)(1-\sigma(u_j))\mrm{d}u_j,
\end{align*}
where in the last inequality, we let $u_j=\gamma_jx+\gamma_{0,j}$. Clearly, $|\gamma_j|/\gamma_j=\mrm{sign}(\gamma_j)$. Moreover, since
\begin{align*}
\int\sigma(x)(1-\sigma(x))\mrm{d}x & =\int\frac{e^x}{(1+e^x)^2}\mrm{d}x\\
	& =\int_0^\infty\frac{1}{(1+u)^2}\mrm{d}u\quad(\mrm{by letting }u=e^x)\\
	& =\left.-(1+u)^{-1}\right|_0^\infty\\
	& =1,
\end{align*}
for a fixed $n$, we have
\begin{align*}
\int |f'(x)|\mrm{d}x & \leq\sum_{j=1}^{r_n}|\alpha_j|\mrm{sign}(\gamma_j)\leq\sum_{j=1}^{r_n}|\alpha_j|\leq V_n.
\end{align*}
Therefore, $f\in\mcal{G}$ and the desired result follows.
\end{proof}

\bibliographystyle{imsart-nameyear}
\bibliography{NNRef}

\end{document}